\documentclass[reqno]{amsart}

\pdfoutput=1
\usepackage[T1]{fontenc}
\usepackage{amssymb}
\usepackage{enumerate}
\usepackage{cite}
\usepackage{tikz-cd}
\usepackage{amsrefs}
\usepackage[all]{xy}
\usepackage[capitalize]{cleveref}
\makeatletter
\newtheorem*{rep@theorem}{\rep@title}
\newcommand{\newreptheorem}[2]{%
\newenvironment{rep#1}[1]{%
	\def\rep@title{\cref{##1}}%
	\begin{rep@theorem}}%
	{\end{rep@theorem}}}
\makeatother
\newtheorem{theorem}{Theorem}
\newreptheorem{theorem}{Theorem}
\newtheorem{lemma}[theorem]{Lemma}
\newtheorem{proposition}[theorem]{Proposition}
\newtheorem{corollary}[theorem]{Corollary}
\theoremstyle{definition}
\newtheorem{definition}[theorem]{Definition}
\newtheorem{example}[theorem]{Example}
\theoremstyle{remark}

\newtheorem*{remark}{Remark}

\numberwithin{theorem}{section}
\numberwithin{equation}{section}

\newcommand{\Cy}{\mathcal{C}}

\newcommand{\caff}{\mathfrak{caff}}

\newcommand{\diff}{\mbox{d}}
\newcommand{\Caff}{\mathcal{C}\mbox{aff}}
\newcommand{\pr}{\mbox{pr}}

\newcommand{\R}{\mathbb{R}}
\newcommand{\G}{\mathcal{G}}
\renewcommand{\S}{\mathbb{S}}
\newcommand{\Id}{\mbox{Id}}
\renewcommand{\H}{\mathcal{H}}
\newcommand{\X}{\raisebox{2pt}{$\chi$}}

\newcommand{\Mod}{\mathbf{Mod}}
\newcommand{\K}{\mathcal{K}}

\newcommand{\s}{\mathbf{s}}
\newcommand{\from}{\leftarrow}
\renewcommand{\t}{\mathbf{t}}
\DeclareMathOperator{\Aut}{Aut}
\DeclareMathOperator{\OutAut}{OutAut}

\newcommand{\Pic}{\mathbf{Pic}}

\newcommand{\Z}{\mathbb{Z}}
\newcommand{\Gr}{\mathfrak{Gr}}
\newcommand{\isom}{\cong}

\newcommand{\inv}{^{-1}}
\newcommand{\m}{\mathbf{m}}
\newcommand{\C}{\mathcal{C}}
\newcommand{\Hol}{\mathbf{Hol}}
\newcommand{\hol}{\mathbf{hol}}
\renewcommand{\u}{\mathbf{u}}
\newcommand{\Aff}{\mathrm{Aff}}
\newcommand{\aff}{\mathfrak{aff}}

\renewcommand{\i}{\mathbf{i}}
\newcommand{\Proj}{\mathbf{Pr}}

\renewcommand{\til}{\widetilde}
\newcommand{\dd}[1]{\frac{\partial}{\partial {#1}}}
\newcommand{\into}{\hookrightarrow}
\newcommand{\Modd}{\mathrm{Mod}}
\newcommand{\dR}{\mathrm{dR}}

\setcounter{tocdepth}{1}
\setcounter{section}{-1}
\title{Picard groups of \MakeLowercase{b}-Symplectic manifolds}
\author{Joel Villatoro}
\thanks{The author was partially supported by an AGEP-GRS fellowship under NSF grant DMS 130847.}
\email{villato2@illinois.edu}
\begin{document}
\begin{abstract}
    We compute the Picard group of a stable b-symplectic manifold $M$ by introducing a collection of discrete invariants $\Gr$ which classify $M$ up to Morita equivalence. \end{abstract}
\maketitle
\tableofcontents
\section{Introduction}
Morita equivalence for Poisson manifolds, introduced by Xu \cite{Xu}, is a weak notion of equivalence that, roughly, allows one to identify Poisson manifolds with the same space of symplectic leaves. The Morita self-equivalences form a group called the \emph{Picard group} of the Poisson manifold, a somewhat illusive invariant, studied first by Bursztyn and Weinstein in \cite{Burz2} and more recently by Bursztyn and Fernandes \cite{Burz5}. In the literature one can find very few classification theorems (up to Morita equivalence) and only a small number of examples where the Picard group has been computed.

A b-symplectic manifold is a type of Poisson manifold which is a mild degeneration of a symplectic manifold, where the symplectic form has a log type singularity along an hypersurface. The set of b-symplectic manifolds form a tractable class for the explicit calculation of many invariants of Poisson manifolds (see \cite{GMP1}, \cite{Burz1}, \cite{Radko} and \cite{Radko2}).  The Picard group of a b-symplectic compact surface $M$ was computed by Radko and Shlyakhtenko in \cite{Radko2}. In this paper we will prove a generalization of their result to arbitrary (even) dimension: we calculate the Picard group of any \emph{stable} b-symplectic manifold. Every b-symplectic structure on an orientable compact manifold can be perturbed to a stable one, so in this sense stable structures are fairly generic.

For any stable b-symplectic manifold $M$ we will construct a collection of discrete data $\Gr$ called a \emph{discrete presentation} of $M$. The discrete presentation is a combinatorial object which takes the form of a heavily decorated graph that encodes the topological configuration of the symplectic leaves of $M$. This graph resembles the data that was used previously the aforementioned calculation of the Picard group of a compact surface and by Gualtieri and Li to classify integrations of b-symplectic manifolds \cite{Gualt}. The edges of the graph $\Gr$ represent the connected components of the singular locus of $M$ while the vertices represent the orbits. The decorations take the form of the fundamental groups of open orbits $\pi_1(U)$ and fundamental groups of symplectic leaves $\pi_1(L)$ of the singular locus and and homomorphisms .

Our first main result is the following:
\begin{reptheorem}{maintheorem2}
Suppose $M$ is a stable b-symplectic manifold and $\mathfrak{Gr}$ is a discrete presentation of $M$. Then:
\[ \Pic(M) \isom \left( \OutAut(\mathfrak{Gr}) \ltimes \R^N \right)/ H,  \]
where $H\subset \OutAut(\mathfrak{Gr}) \ltimes \R^N$ is a discrete normal subgroup.
\end{reptheorem}
In \cref{sectionbsymplecticmanifolds} we will give an explicit description of $H$ and the action of $\R^N$ on $\OutAut(\Gr)$. Of course, to make sense of this result we will define isomorphisms and inner automorphisms of discrete presentations. It turns out that isomorphisms of discrete presentations are a powerful tool for the classification of stable b-symplectic structures. This is the content of our second main theorem:

\begin{reptheorem}{maintheorem1}
Suppose $M_1$ and $M_2$ are stable b-symplectic manifolds and $\Gr_1$ and $\Gr_2$ are discrete presentations of each, respectively. Then $M_1$ and $M_2$ are Morita equivalent if and only if there exists an isomorphism $\Gr_1 \to \Gr_2$.
\end{reptheorem}
The relatively simple statements of \cref{maintheorem2} and \cref{maintheorem1} are somewhat deceptive as the definition of an isomorphism of discrete presentations $\Gr_1 \to \Gr_2$ is not so straightforward. On the other hand, the data can often be simplified when computing specific examples (see \cref{sectionexamples}).

The paper will be organized as follows: In \cref{section:bsymplecticintro} and \cref{section:picgroups} we will establish our notation and give a brief overview of b-symplectic structures and their symplectic groupoids. In \cref{sectionstrategyofproof} we will outline the general strategy of our proof and reduce the problem to 2-dimensions. \cref{sectionaffineplane} and \cref{sectionaffinecylinder} are concerned with classifying groupoids over the `affine plane' and the `affine cylinder' respectively via discrete data. Finally, in \cref{sectionbsymplecticmanifolds} we will complete our proofs of the two main theorems. The last part, \cref{sectionexamples} goes over a few explicit applications of \cref{maintheorem2}. In particular, we will compare these results with the classification of b-symplectic compact surfaces due to Bursztyn and Radko \cite{Burz1} and the computation of the Picard group for b-symplectic surfaces obtained by Radko and Shlyakhtenko in \cite{Radko2}.

{\bf Acknowledgments}\, The author would like to express his thanks to his advisor Prof. Rui Fernandes for his guidance and mentorship throughout. He would also like to thank Eva Miranda for her helpful recommendation on how to approach the topic at hand.
\section{Stable b-symplectic structures}\label{section:bsymplecticintro}
In this section we recall some basic facts on b-symplectic manifolds that we will need later and, at the same time, we establish our notation conventions and terminology for b-symplectic manifolds.

Recall that a \emph{b-symplectic structure} (also known as a \emph{log symplectic structure}) on an $2n$ dimensional smooth manifold $M$ is a Poisson structure $\pi$ on $M$ such that the section $\wedge^n \pi$ of $\wedge^{2n} TM$ intersects the zero section transversely. The \emph{singular locus} $Z$ of $\pi$ is the zero set of $\wedge^n \pi$ and is a codimension one embedded submanifold (an hypersurface) in $M$.

There are two alternative languages used in the study of b-symplectic structures. The \emph{b-geometry} point of view treats b-symplectic structures as non-degenerate closed 2-forms on the \emph{b-tangent bundle}, while the \emph{Poisson} point of view treats b-symplectic structures as a special class of Poisson bi-vectors on $M$. In this paper, we will mostly use the language of Poisson geometry.

We will always assume that $M$ is orientable. A choice of volume form $\mu$ determines a \emph{modular vector field} $X_\mu$: it is the unique Poisson vector field such that:
\[ L_{X_h}\mu=X_\mu(h)\mu, \]
for any hamiltonian vector field $X_h$. If $\mu'=e^f \mu$ is another choice of volume form, then the corresponding modular vector fields are related by $X_{\mu'}=X_{\mu}+X_f$.

In dimension 2, the following two examples of b-symplectic manifolds will play an important role. Understanding these examples and their integrations will be critical to our main result.
\begin{example}[Affine Plane]\label{affdef}
The plane $\R^2$ equipped with the Poisson structure $x \partial{y} \wedge \partial{x}$, is a b-symplectic manifold which we call the \emph{affine plane} and denote it by $\aff$. If $\mu=\diff x\wedge\diff y$ is the standard volume form in $\R^2$, the associated modular vector field is $X_\mu=\partial_y$. As a Poisson manifold, $\aff$ is the linear Poisson structure associated to the dual of the 2-dimensional affine Lie algebra.

For any real number, $\rho \neq 0$, we can modify the Poisson structure on $\aff$ by setting:
\[ \pi^\rho = \frac{1}{\rho}\pi.\] 
The modular vector field of $\pi^\rho$ relative to the standard volume is $\rho \partial_y$.
\end{example}
\begin{example}[Affine Cylinder]\label{caffdef}
Consider the action of $\Z$ on $\aff^\rho$ given by $n \cdot (x,y) = (x,y+n)$. This is an action by Poisson diffeomorphisms and the Poisson structure of the quotient $\R^2/\Z=\R \times \S^1$ takes the form:
\[ \left( \frac{x}{\rho}\right) \frac{\partial}{\partial \theta} \wedge \frac{\partial}{\partial x} \, . \]
Here $\partial_\theta$ denotes the projection of $\partial_y$ to $\R^2/\Z$. We call this manifold the \emph{affine cylinder of modular period $\rho$} and denote it by $\caff^\rho$. When $\rho=1$ we may denote $\caff^\rho$ by just $\caff$.

The modular vector field of $\pi^\rho$ relative to the standard volume $\mu=\diff x\wedge\diff \theta$ is $X_\mu=\rho \partial_\theta$. The modular period $\rho$ turns out to be a complete Morita invariant of $\caff^\rho$: the manifolds $\caff^{\rho_1}$ and $\caff^{\rho_2}$ are Morita equivalent if and only if $\rho_1 = \rho_2$.
\end{example}
For a b-symplectic manifold $(M,\pi)$ we will denote the corresponding singular 2-form as $\pi\inv$. In a neighborhood of a point on the singular locus, the Darboux-Weinstein splitting theorem gives the local normal form:
\[ \pi = x \dd{y} \wedge \dd{x} + \sum_{i}^{n-1} \dd{p_i} \wedge \dd{q_i}.  \]
Alternatively, the corresponding singular 2-form is given by:
\[ \pi\inv = \frac{1}{x} \diff x \wedge \diff y + \sum_{i}^{n-1} \diff q_i \wedge \diff p_i. \]
 In Darboux-Weinstein coordinates, the modular vector field associated to the canonical volume form $\mu$ in such coordinates is $X_\mu=\dd{y}$.
\begin{example}[Semi-local model]\label{defmappingtoruspoiss}
Given a symplectic manifold $(L,\omega^L)$ and a symplectomorphism $f: L \to L$ recall that the \emph{symplectic mapping torus}:
\[ T_f := \frac{\R \times L}{(y+1,p) \sim (y,f(p))} \, , \]
yields a symplectic fibration $T_f\to \S^1$. The corresponding regular Poisson structure $\pi^f$  on $T_f$ can naturally be extended to a b-symplectic structure $\pi$ on $\R \times T_f$ with singular locus $Z=T_f$ by setting:
\[ \pi=\frac{x}{\rho}\dd{\theta}\wedge \dd{x} + \pi^f  ,\]
where $\rho \in \R$ is any non-zero real number.  We call it the \emph{canonical b-symplectic extension of $T_f$ with period $\rho$}.
\end{example}
The previous example furnishes the semi-local model around the connected components of $Z$ of the class of b-symplectic structures of interest to us:
\begin{definition}
A \emph{stable b-symplectic structure} is an oriented b-symplectic structure on $M$ for which each component $Z_i$ of the singular locus $Z$ admits a tubular neighborhood $U$ isomorphic to $\R \times T_f$, for some mapping torus $f: L \to L$ and some real number $\rho \neq 0$. The number $\rho$ is called the \emph{modular period} of $Z_i$. 
\end{definition}
Notice that the leaf space of a stable b-symplectic manifold has a simple structure: it is a collection of circles connected by open points, one for each connected component of $M-Z$.

The results of Guillemin, Miranda, and Pires \cites{GMP1,GMP2} lead to the following useful criteria for stability (combine Theorem 50 from \cite{GMP2} and Theorem 59 from \cite{GMP1}):
\begin{theorem}[Guillemin, Miranda, Pires]\label{GMPtorus}
Let $(M,\pi)$ be a orientable b-symplectic manifold. Then $(M,\pi)$ is stable if and only if $Z$ is compact and each of its connected components admits a closed leaf.
\end{theorem}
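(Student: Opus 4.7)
The plan is to prove both directions by working near each connected component $Z_i$ of the singular locus, using the fact that for any volume form on $M$ the modular vector field $X_\mu$ is tangent to $Z$ and transverse to the induced symplectic foliation on $Z$. Both properties are visible in the Darboux--Weinstein normal form of the preceding paragraph, where $X_\mu = \dd{y}$ is tangent to $\{x=0\}$ and transverse to the leaves $\{x=0,\ y=\mathrm{const}\}$.

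For the forward direction, assume $M$ is stable and choose tubular neighborhoods $U_i \isom \R \times T_f$ of each $Z_i$ as in \cref{defmappingtoruspoiss}. The induced identification $Z_i \isom T_f$ shows that $Z_i$ is compact (the mapping torus has compact fiber $L$, hence compact total space) and that each fiber of $T_f \to \S^1$ is a closed symplectic leaf of the induced Poisson structure on $Z_i$. Both conclusions follow immediately.

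For the reverse direction, assume $Z$ is compact and fix a component $Z_i$ containing a closed leaf $L$. Because $Z_i$ is compact, $X_\mu|_{Z_i}$ is complete; because $X_\mu$ is a Poisson vector field transverse to the foliation, its flow permutes leaves and carries $L$ to a family of nearby closed leaves, each diffeomorphic to $L$. One then shows that there is a uniform positive time $\rho$ at which the flow first returns $L$ to itself, producing a symplectomorphism $f := \phi_\rho|_L : L \to L$ and a diffeomorphism $Z_i \isom T_f$ intertwining the $X_\mu$-flow with translation along the $\S^1$-factor. A b-symplectic Moser-type argument then absorbs the difference between $\pi$ and the model $\frac{x}{\rho}\dd{\theta}\wedge\dd{x}+\pi^f$ on a possibly smaller tubular neighborhood of $Z_i$, yielding the required isomorphism with $\R \times T_f$.

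The main obstacle is producing the uniform return time $\rho$ of the modular flow to $L$ starting from the hypothesis of merely a single closed leaf. The underlying structural statement is that the flow induced by $X_\mu$ on the one-dimensional leaf space $Z_i/\mathcal{F}$ is periodic; establishing this requires combining compactness of $Z_i$, the fact that $[L]$ is a Hausdorff point of the leaf space, and that the flow acts by foliation-preserving diffeomorphisms. Once this global structural fact is in place, the remainder reduces to the semi-local Moser technology developed by Guillemin, Miranda, and Pires, which upgrades the bundle structure on $Z_i$ to the claimed tubular neighborhood isomorphism.
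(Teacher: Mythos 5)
This statement is not proved in the paper at all: it is imported verbatim from the literature (the text explicitly says it follows by combining Theorem~50 of \cite{GMP2} with Theorem~59 of \cite{GMP1}), so there is no in-paper argument to compare yours against. Judged on its own, your sketch follows the standard route and identifies the right crux, but two points deserve attention. In the forward direction you assert that the fiber $L$ of the mapping torus is compact; the paper's definition of stability does not say this explicitly, so you should either take compactness of $L$ as part of the definition of the semi-local model or note that it is exactly what makes the forward implication true --- as written, ``hence compact total space'' is circular without it. In the reverse direction, the periodicity of the induced flow on the leaf space is the entire content of the hard implication, and your proposal only gestures at it (``one then shows\dots''). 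It can be closed cleanly: the return set $S=\{t\in\R : \phi_t(L)=L\}$ is a subgroup of $\R$, and it is closed because $L$ is a closed subset and $\phi_{t_n}(p)\to\phi_t(p)$ forces $\phi_t(p)\in L$; hence $S$ is $\{0\}$, $\rho\Z$, or $\R$. The case $S=\R$ contradicts transversality of $X_\mu$ to the foliation, and $S=\{0\}$ would give $Z_i\cong\R\times L$ via the flow (the saturation $\bigcup_t\phi_t(L)$ is open and closed in the connected $Z_i$), contradicting compactness. So $S=\rho\Z$ and $Z_i\to\R/\rho\Z$ exhibits the mapping torus. The final upgrade from the fibration structure on $Z_i$ to the tubular-neighborhood normal form $\R\times T_f$ is a genuine b-symplectic Moser/normal-form theorem that you outsource to Guillemin--Miranda--Pires; since the paper itself treats the whole statement as a citation, that is an acceptable place to stop, but you should be explicit that this step is being quoted rather than proved.
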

In particular, when $M$ is compact we need only verify the existence of closed leaves in each connected component of the singular locus.
\begin{example}[closed b-symplectic surfaces]
In dimension 2, every b-symplectic structure on a compact oriented surface $M$ is stable. Such stable b-symplectic (or just b-symplectic) structures on were studied by Olga Radko in \cite{Radko2} who called them \emph{topologically stable surfaces}. Radko showed that, up to Poisson diffeomorphism, stable b-symplectic structures on a surface $M$ were classified by the following data:
\begin{enumerate}[(a)]
\item The topological arrangement of the singular curves.
\item The period of the modular vector field around each singular curve.
\item The `regularized' volume of $\pi\inv$ on $M$.
\end{enumerate}
Later, Bursztyn and Radko \cite{Burz1} proved that (a) and (b) alone classify a topologically stable surface, up to Morita equivalence.
\end{example}
We quote here one more fact from the work of Guillemen, Miranda, and Pires, that will be useful for us in the sequel (see \cite{GMP2}):
\begin{theorem}[Guillemin, Miranda, Pires]\label{thm:bcohomology}
The Poisson cohomology of an arbitrary b-symplectic structure is given by:
\[ H_\pi^n(M) \isom H_{\dR}^n(M) \oplus H_{\dR}^{n-1}(Z). \]
\end{theorem}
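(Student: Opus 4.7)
The plan is to translate the Poisson cohomology computation into a question about the b-de Rham complex, where the answer becomes a Mayer--Vietoris style decomposition. Since $\pi$ is b-symplectic, the sharp map $\pi^\sharp$ extends to a bundle isomorphism ${}^bT^*M \to {}^bTM$ between the b-cotangent and b-tangent bundles (this is precisely the content of b-symplecticity in the b-geometric language). This bundle isomorphism intertwines the Poisson differential $d_\pi$ on $\mathfrak{X}^\bullet(M)$ with the b-de Rham differential on ${}^b\Omega^\bullet(M)$, and hence induces
\[ H_\pi^n(M) \isom {}^bH^n(M). \]
So the problem reduces to computing the b-de Rham cohomology.

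For this, I would introduce the residue short exact sequence of complexes
\[ 0 \to \Omega^\bullet(M) \to {}^b\Omega^\bullet(M) \xrightarrow{\mathrm{Res}} \Omega^{\bullet-1}(Z) \to 0, \]
where $\mathrm{Res}$ is defined locally near $Z$ by writing a b-form as $\alpha = \beta\wedge\frac{\diff x}{x}+\gamma$ with $\gamma$ smooth and $x$ a defining function for $Z$, and setting $\mathrm{Res}(\alpha):=\beta|_Z$. One needs to check that $\mathrm{Res}$ is independent of the choice of defining function, which follows because any two defining functions differ by a nowhere-zero smooth factor $h$, and $\diff(hx)/(hx) - \diff x/x = \diff h / h$ is smooth. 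The map $\mathrm{Res}$ is surjective via a partition of unity argument using a tubular neighborhood of $Z$, and chases through the definitions to be a chain map.

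The associated long exact sequence in cohomology reads
\[ \cdots \to H_{\dR}^n(M) \to {}^bH^n(M) \xrightarrow{\mathrm{Res}_*} H_{\dR}^{n-1}(Z) \xrightarrow{\delta} H_{\dR}^{n+1}(M) \to \cdots, \]
so the theorem follows once we produce a right-inverse to $\mathrm{Res}_*$ at the cochain level, thereby simultaneously showing $\delta=0$ and splitting the sequence. Given a closed form $\beta$ on $Z$, fix a tubular neighborhood $\varphi:Z\times(-\epsilon,\epsilon)\hookrightarrow M$ with projection $p:U\to Z$, defining function $x$, and a bump function $\chi$ equal to $1$ near $Z$ and supported in $U$. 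Set
\[ s(\beta) := \chi\, p^*\beta \wedge \tfrac{\diff x}{x}. \]
A direct calculation shows $\diff s(\beta) = \diff \chi \wedge p^*\beta \wedge \frac{\diff x}{x}$, which is a \emph{smooth} form since $\diff \chi$ vanishes on a neighborhood of $\{x=0\}$; but by the very definition of the b-differential this term vanishes in the b-de Rham complex modulo smooth forms, and a slight modification of $s$ by a smooth correction term makes $s(\beta)$ closed in ${}^b\Omega^\bullet(M)$ with $\mathrm{Res}_*[s(\beta)]=[\beta]$.

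The main obstacle is verifying that $s$ descends to a well-defined map on cohomology, i.e.\ that changing the tubular neighborhood, defining function, or bump function only modifies $s(\beta)$ by a b-exact form. This is a careful but standard b-geometry computation: the ambiguities are controlled by smooth forms, which live in the kernel of $\mathrm{Res}_*$ and, up to b-exact pieces, can be absorbed into the $\Omega^\bullet(M)$ summand. Once this is established the long exact sequence collapses into short split ones, yielding the desired decomposition $H_\pi^n(M) \isom H_{\dR}^n(M)\oplus H_{\dR}^{n-1}(Z)$.
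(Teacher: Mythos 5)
The paper itself offers no proof of this statement: it is quoted from Guillemin--Miranda--Pires \cite{GMP2} (combined with the Mazzeo--Melrose computation of b-de Rham cohomology), and your outline follows exactly the route taken in that reference. The second half of your argument is essentially complete: the residue sequence is exact, $\mathrm{Res}$ is independent of the defining function for the reason you give, and the explicit splitting works once you add the smooth correction term $\pm\log|x|\,\diff\chi\wedge p^*\beta$ (smooth because $\diff\chi$ is supported away from $Z$); this correction simultaneously kills the connecting map and makes the section descend to cohomology, giving ${}^bH^n(M)\isom H^n_{\dR}(M)\oplus H^{n-1}_{\dR}(Z)$.

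The gap is in the first step. The lifted bivector is a nondegenerate section of $\wedge^2\,{}^bTM$, so its sharp map identifies ${}^b\Omega^\bullet(M)$ with $\Gamma(\wedge^\bullet\,{}^bTM)$ carrying the Schouten differential against the lifted bivector --- \emph{not} with the Lichnerowicz complex $(\mathfrak{X}^\bullet(M),d_\pi)=\Gamma(\wedge^\bullet TM)$. What relates the two is the anchor ${}^bTM\to TM$, which induces a chain map $\Gamma(\wedge^\bullet\,{}^bTM)\to\mathfrak{X}^\bullet(M)$ that is injective (it is an isomorphism over the dense open set $M-Z$) but not surjective: in Darboux--Weinstein coordinates $\partial/\partial x$ is not tangent to $Z$, so already in degree one the image is the proper subspace of vector fields tangent to $Z$. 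The claim that this chain map is nevertheless a quasi-isomorphism is precisely the nontrivial content of the Guillemin--Miranda--Pires theorem $H^\bullet_\pi(M)\isom{}^bH^\bullet(M)$, and it requires a separate argument (a computation in the local normal form together with a Mayer--Vietoris or sheaf-theoretic globalization, as in \cite{GMP2}); it does not follow formally from the existence of the bundle isomorphism ${}^bT^*M\to{}^bTM$. With that step supplied, your proof is the standard one.
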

\section{Picard groups}\label{section:picgroups}
We now turn to our main object of study: the Picard group of a Poisson manifold. The elements of this group are the isomorphism classes of self-Morita equivalences of the Poisson manifold. Morita equivalences of Poisson manifolds can be studied in terms of symplectic dual pairs or as symplectic groupoid bimodules. In this paper we will take the latter point of view.
\subsection{Symplectic groupoids}
We will consider a \emph{Lie groupoid} with manifold of arrows $\G$ and smooth structure maps:
\begin{itemize}
    \item unit embedding $\u: M \to \G$;
    \item source and target maps $\s,\t: \G \to M$;
    \item multiplication map $\m: \G \times_{s,t} \G \to \G$,
    \item inverse map $\i: \G \to \G$.
\end{itemize}
We will write $\G \rightrightarrows M$ to indicate that $\G$ is a Lie groupoid with space of units $M$. The space $\G$ maybe non-Hausdorff, but $M$ as well as the $\s$-fibers/$\t$-fibers, are Hausdorff manifolds.

Recall that a \emph{symplectic groupoid} is a pair $(\G,\Omega)$, where the symplectic form on $\G$ is required to be multiplicative:
\[ \m^* \Omega = \pr_1^* \Omega + \pr_2^* \Omega. \]
A symplectic groupoid induces a unique Poisson structure $\pi$ on its manifold of units $M$ for which the target map $\t: \G \to M$ is Poisson. In such a case we say that $\G$ \emph{integrates} $(M,\pi)$. If $\G$ has 1-connected $\s$-fibers, then we say that $\G$ is the unique (up to isomorphism) \emph{canonical integration} of $(M,\pi)$ and we write $\G=\Sigma(M)$.

The general integrability criteria of Crainic and Fernandes imply that a Lie algebroid with injective anchor map on an open dense set is integrable (see \cite{Crainic}). It follows that every b-symplectic manifold $(M,\pi)$ admits an integration. Recalling our two examples of b-symplectic manifolds defined earlier, we give their canonical integrations.
\begin{example}[Integration of the affine plane]\label{sigmaaffdef}
The two dimensional \emph{affine group}, which we denote by $\Aff$, is $\R^2$ equipped with the product
\[ (c,d) \cdot (a,b) = (a + c, b + e^a d). \]
The Lie algebra of $\Aff$ is $\aff$, the two dimensional affine Lie algebra. Identifying $\aff$ and $\aff^*$ using the standard metric on $\R^2$, leads to the linear Poisson structure on $\aff$ of \cref{affdef}. We can now use the fact that the integrations of linear Poisson structures on the dual of a Lie algebra $\mathfrak{g}$ are the action groupoids $T^*G\simeq G\times\mathfrak{g}^*\rightrightarrows \mathfrak{g}^*$ associated with the coadjoint action of a Lie group $G$ integrating $\mathfrak{g}$, equipped with the canonical symplectic form on the cotangent bundle.

We can write the co-adjoint action of $\Aff$ on $\aff\simeq \aff^*$ explicitly as:
\[ (a,b) \cdot (x,y) = (x e^a, y + xb). \]
The resulting action groupoid $\G \Aff=\Aff\times\aff\simeq \R^4$ has source and target maps defined by:
\[ \s(a,b,x,y) = (x,y),\qquad \t(a,b,x,y) = (a,b)\cdot(x,y). \]
and multiplication given by:
\[ (c,d,xe^a , y + xb) \cdot (a,b,x,y) = (a + c, b +e^a d, x, y). \]
The multiplicative symplectic form on the groupoid $\G\Aff$ is:
\begin{align*}
\Omega &= t^* (\diff \log x \wedge \diff y) - s^*( \diff \log x \wedge \diff y) \\
&= \diff x \wedge \diff b + \diff a \wedge \diff y + b \diff a \wedge \diff x + x \diff a \wedge \diff b. 
\end{align*}
Since $\Aff$ is simply connected, $\G\Aff$ has simply connected source fibers, and we conclude that $\G\Aff \isom \Sigma(\aff)$, the canonical integration.

The Poisson manifolds $\aff^\rho$ and $\aff$ have isomorphic algebroids. Hence, $\Sigma(\aff^\rho)$ has the same underlying Lie groupoid as $\Sigma(\aff)=\G\Aff$ but with the new symplectic form $\rho \Omega$. We will denote this modified symplectic groupoid by $\G\Aff^\rho$.
\end{example}
\begin{example}[Integration of the affine cylinder]\label{sigmacaffdef}
The affine cylinder $\caff$ was constructed in \cref{caffdef} as a quotient of the affine plane $\aff$ by a free and proper action of $\Z$ by Poisson diffeomorphisms. The $\Z$-action and the $\Aff$-action on $\aff$ commute. We obtain a lifted $\Z$-action on $\G\Aff$ by symplectic groupoid isomorphisms and its quotient is the action groupoid:
\[ \Caff := \Aff \times \caff = \{ (a,b,x,\theta) \in \R^2 \times (\R \times \S^1) \}. \]
The quotient symplectic structure is:
\[ \Omega = \diff x \wedge \diff b + \diff a \wedge \diff \theta + b \diff a \wedge \diff x + x \diff a \wedge \diff b. \]
Again, the source fibers of $\Caff$ are simply connected so $\Caff \isom \Sigma(\caff)$.
\end{example}
\begin{remark}
Gualtieri and Li in \cite{Gualt} have found the source connected integrations of b-symplectic manifolds. Although we do not explicitly use their classification in this paper, our definition of the discrete presentation bears resemblance to their work.
\end{remark}
For a Lie groupoid $\G \rightrightarrows M$ we will be using the following notations. The \emph{isotropy group} over $x \in M$ is the Lie group of arrows with source and target $x$:
\[ \G_x := \t\inv(x)\cap \s\inv(x).  \]
The \emph{restriction} of $\G$ to a subset $U \subset M$ is the subset of arrows in $\G$ whose source and target lie in $U$:
\[  \G|_U := \s\inv(U)\cap\t\inv(U). \]
The quotient space $M/\G$ is the topological quotient of $M$ by the orbits of $\G$ and we call it the \emph{orbit space} of $\G$. A \emph{bisection} of $\G$ is a smooth section $\sigma: M \to \G$ of the source map such that $\t \circ \sigma: M \to M$ is a diffeomorphism. A \emph{local bisection} around $g \in \G$ is a map $\sigma:U \to \G$ where $U \subset M$ is an open neighborhood of $\s(g)$ such that $\sigma(\s(g)) = g$ and $\t \circ \sigma: U \to \t\circ\sigma(U)$ is a diffeomorphism. A basic fact is:
\begin{lemma}
For a Lie groupoid $\G \rightrightarrows M$ and an arrow $g \in \G$ there always exists a local bisection $\sigma:U \to \G$ around $g$.
\end{lemma}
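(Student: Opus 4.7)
The plan is to reduce the problem to finding a suitable linear complement inside $T_g \G$, after which the standard submersion and inverse function theorems finish the job. Write $y := \s(g)$, $x := \t(g)$, and $n := \dim M$. Because $\s$ is a submersion, for \emph{any} $n$-dimensional subspace $W \subset T_g \G$ complementary to $\ker d\s_g$ the submersion theorem produces a smooth local section $\tau : V \to \G$ of $\s$ with $\tau(y) = g$ and $d\tau_y(T_y M) = W$. By the chain rule $d(\t \circ \tau)_y = d\t_g \circ d\tau_y$, which is an isomorphism if and only if $W \cap \ker d\t_g = 0$. Hence the problem reduces to exhibiting an $n$-dimensional $W \subset T_g \G$ which is a common complement to both $\ker d\s_g$ and $\ker d\t_g$.

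Since $\s$ and $\t$ are both submersions, each of $\ker d\s_g$ and $\ker d\t_g$ has codimension $n$ in $T_g \G$. A standard dimension count in the Grassmannian $\operatorname{Gr}(n, T_g \G)$ shows that the $n$-planes meeting a fixed subspace of codimension $n$ nontrivially form a proper closed subvariety (a Schubert variety). The union of two such proper subvarieties is still proper, so picking any $W$ outside it furnishes the required common complement. One can also give an explicit construction by choosing a basis of $T_y M$, lifting it to a complement of $\ker d\s_g$, and then adjusting each lift by an element of $\ker d\s_g$ so that the $d\t_g$-images are linearly independent; the freedom to do so comes from the surjectivity of $d\t_g$.

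With this $W$ and the induced section $\tau$, the differential $d(\t \circ \tau)_y$ is an isomorphism, so $\t \circ \tau : V \to M$ is a local diffeomorphism at $y$ by the inverse function theorem. Shrinking $V$ to an open neighborhood $U$ of $y$ on which $\t \circ \tau$ is a diffeomorphism onto its image yields a local bisection around $g$ in the sense of the definition. The only delicate step is the linear-algebra fact about simultaneous complements of two codimension-$n$ subspaces; everything else is a routine application of the submersion and inverse function theorems.
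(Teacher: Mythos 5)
Your argument is correct and complete. The paper itself offers no proof of this lemma (it is explicitly ``left to the reader''), so there is nothing to compare against; what you give is the standard argument, and the one essential point --- the existence of an $n$-plane $W \subset T_g\G$ simultaneously complementary to $\ker d\s_g$ and $\ker d\t_g$ --- is handled correctly both by the genericity argument in the Grassmannian (complements of a fixed codimension-$n$ subspace form an open dense set, and two open dense sets intersect) and by your explicit adjustment of lifts, which works because $d\t_g(W_0) + d\t_g(\ker d\s_g) = T_{\t(g)}M$ for any complement $W_0$ of $\ker d\s_g$. Note also that the possible non-Hausdorffness of $\G$ causes no trouble, since the submersion and inverse function theorems are purely local.
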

We leave the (easy) proof to the reader. Notice that such a bisection, in general, is not unique.
\subsection{The Picard group}
The Picard group of a symplectic groupoid is defined in terms of symplectic bimodules. When a symplectic groupoid has simply connected source fibers, one can study bimodules entirely in terms of the base Poisson manifold. Although our ultimate goal is to compute the Picard group of the canonical integration $\Sigma(M)$, for $M$ a stable b-symplectic manifold, it will be necessary to deal with non-canonical integrations. It is for this reason that we will primarily use the language of symplectic bimodules.

Suppose $(\G_1,\Omega_1) \rightrightarrows M_1$ and $(\G_2,\Omega_2) \rightrightarrows M_2$ are symplectic groupoids. A \emph{$(\G_2,\G_1)$-bimodule} is a symplectic manifold $(P,\Omega^P)$ equipped with:
\begin{itemize}
    \item surjective submersions, $J_1: P \to M_1$,$J_2: P \to M_2$;
    \item action maps:
\[ \m_L: \G_2 \times_{\s,J_2} P \to P \qquad \m_R: P \times_{J_1,\t} \G_1 \to P. \]
\end{itemize}
We require these to fulfill the axioms of commuting principal left and right actions of $\G_2$ and $\G_1$, with $J_2$ and $J_1$ as moment maps, respectively. We also require that the 2-form $\Omega^P$ be both left and right multiplicative.
\[ \m^*_L \Omega^P = \pr_1^* \Omega_2 + \pr_2^* \Omega^P \qquad \m^*_R \Omega^P = \pr_1^* \Omega^P + \pr_2^* \Omega_1. \]
Bimodules are sometimes called \emph{principal} ($\G_2$,$\G_1$)-bimodules. Since we will not consider here non-principal bimodules, this should not be a source of confusion.  A $(\G_2,\G_1)$-bimodule is also called a \emph{Morita equivalence} between $\G_1$ and $\G_2$. This notion of Morita equivalence was first defined by Xu in \cite{Xu}.

We use the following diagram to illustrate the notion of $(\G_2,\G_1)$-bimodule:
\[
  \raisebox{-0.5\height}{\includegraphics{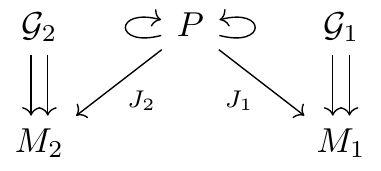}}
\]
Any $(\G_2,\G_1)$-bimodule $P$ induces a map of orbit spaces which we denote by the same letter $P$:
\[ P:M_1/\G_1 \to M_2/\G_2,\quad [x]\longmapsto [J_2(p)] \text{ for any } p \in P\text{ with }J_1(p) = x. \]
\begin{example}
\label{example:transversal}
Given a symplectic groupoid $(\G,\Omega)$ over a Poisson manifold $M$ and a complete Poisson transversal $T$ (i.e., a submanifold intersecting each symplectic leaf of $M$ transversely in a symplectic submanifold), then $(\G|_T,\Omega_{\G|_T})$ is a symplectic groupoid Morita equivalent to $(\G,\Omega)$: a $(\G,\G|_T)$-bimodule is given by:
\[
  \raisebox{-0.5\height}{\includegraphics{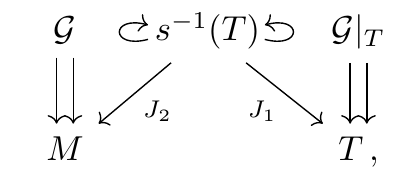}}
\]
with the obvious left/right actions. By Proposition 8 in \cite{Crainic2}, both $s^{-1}(T)$ and $\G|_T$ are symplectic submanifolds. Using the multplicativity of $\Omega$, it follows easily that this is indeed a symplectic bibundle.
\end{example}
We take the point of view that a bimodule $P$ is a generalized isomorphism $\G_1 \to \G_2$, where an element $p\in P$ is thought of as an ``arrow" from a point in $M_1$ to a point in $M_2$. Hence, we may sometimes refer to $J_1(p)$ as the \emph{source} of $p$ and $J_2(p)$ as the \emph{target} of $p$. 
For any $(\G_2,\G_1)$-bimodule $P$ the restriction of $P$ to $U \subset M_1$ is the collection of all elements of $p$ whose source lies in $U$.
\[ P|_U := J_1\inv(U) .\]
When $\G_1,\G_2 \rightrightarrows M$ have the same unit manifold, the isotropy of $P$ at $x \in M$ are the points in $P$ which have source and target equal to $x$.
\[ P_x := J_1\inv(x) \cap J_2\inv(x). \]
A bisection of ($\G_2$,$\G_1$)-bimodule $P$ is a section $\sigma:M_1\to P$ of $J_1:P\to M_1$ such that $J_2\circ \sigma:M_1\to M_2$ is a diffeomorphism.

Sometimes, bimodules come from actual maps. Suppose $F:\G_1 \to \G_2$ is an isomorphism of symplectic groupoids. The induced map $f: M_1 \to M_2$ on the space of units is necessarily a Poisson diffeomorphism and we say that $F$ \emph{covers} $f$. The map $F$ gives rise to a symplectic bimodule:
\[ P_F:= \G_2 \times_{s,f} M_1, \]
with anchor maps:
\[ J_2(g,x) = \t(g),\quad J_1(g,x) = x, \]
and left/right multiplication defined by:
\[ g_2 \cdot (g,x) = (g_2g,x),\qquad (g,x) \cdot g_1 = (gF(g_1),x). \]
The symplectic structure on $P_F$ is the pullback $\pr_1^* \Omega_2$. The symplectic $(\G_2,\G_1)$-bimodule $P_F$is called the \emph{symplectic bimodule associated to} $F$.

In general, not every bimodule will arise from a symplectic groupoid isomorphism. In fact, it is shown in \cite{Burz5} that:
\begin{proposition}\label{prop:lagbisection}
A symplectic $(\G_2,\G_1)$-bimodule $P$ is isomorphic to $P_F$ for some isomorphism of symplectic groupoids $F:\G_1 \to \G_2$ if and only if the bimodule admits a lagrangian bisection. 
\end{proposition}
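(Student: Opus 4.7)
The plan is to prove both directions of the equivalence. For the forward direction, assume $P\cong P_F$ for an isomorphism of symplectic groupoids $F:\G_1\to\G_2$ covering $f:M_1\to M_2$. Then the section $\sigma:M_1\to P_F$ sending $x\mapsto (\u_2(f(x)),x)$ is a bisection, because $J_2\circ\sigma=f$ is a diffeomorphism. It is Lagrangian because the symplectic form on $P_F$ is $\pr_1^*\Omega_2$, the units form a Lagrangian submanifold of $(\G_2,\Omega_2)$, and $\dim\sigma(M_1)=\dim M_1=\tfrac12\dim P_F$.

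For the harder direction, suppose $\sigma:M_1\to P$ is a Lagrangian bisection and set $f:=J_2\circ\sigma$, a diffeomorphism $M_1\to M_2$. Given $g_1\in\G_1$ with $\s(g_1)=x$, $\t(g_1)=y$, both $\sigma(x)$ and $\sigma(y)\cdot g_1$ lie in $J_1^{-1}(x)$; since the left $\G_2$-action on each $J_1$-fiber is principal, there is a unique $F(g_1)\in\G_2$ with
\[ F(g_1)\cdot\sigma(x)=\sigma(y)\cdot g_1. \]
Smoothness of $F$ follows from principality; on units $F(\u_1(x))=\u_2(f(x))$ from the identity axiom of the actions; and $F$ is a groupoid homomorphism by a short associativity calculation: if $g_1h_1$ is composable with $\s(g_1)=\t(h_1)=z$, then both $F(g_1h_1)\cdot\sigma(\s(h_1))$ and $F(g_1)F(h_1)\cdot\sigma(\s(h_1))$ equal $\sigma(\t(g_1))\cdot g_1h_1$, and uniqueness gives $F(g_1h_1)=F(g_1)F(h_1)$. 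Invertibility of $F$ comes from running the construction with $\sigma^{-1}$ (the bisection of the opposite bimodule) in place of $\sigma$.

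The main obstacle is verifying that $F$ is symplectic, i.e. $F^*\Omega_2=\Omega_1$; this is where the Lagrangian hypothesis is used in an essential way. Consider the map $\psi:\G_1\to P$, $\psi(g_1):=\sigma(\t(g_1))\cdot g_1=F(g_1)\cdot \sigma(\s(g_1))$. Writing $\psi$ as the composition $\m_R\circ(\sigma\circ\t,\Id)$ and applying right multiplicativity of $\Omega^P$ gives
\[ \psi^*\Omega^P=\t^*\sigma^*\Omega^P+\Omega_1=\Omega_1, \]
where the Lagrangian condition $\sigma^*\Omega^P=0$ is used. Writing $\psi$ instead as $\m_L\circ(F,\sigma\circ\s)$ and applying left multiplicativity gives
\[ \psi^*\Omega^P=F^*\Omega_2+\s^*\sigma^*\Omega^P=F^*\Omega_2. \]
Comparing yields $F^*\Omega_2=\Omega_1$, so $F$ is an isomorphism of symplectic groupoids.

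Finally, one exhibits the bimodule isomorphism $\Phi:P_F\to P$, $\Phi(g,x):=g\cdot\sigma(x)$. This is well-defined since $\s(g)=f(x)=J_2(\sigma(x))$, bijective because the left $\G_2$-action on each $J_1$-fiber is free and transitive, smooth, and $(\G_2,\G_1)$-equivariant by the definition of $F$; one checks that $\Phi$ pulls back $\Omega^P$ to the symplectic form on $P_F$ by a calculation analogous to the one above, completing the proof.
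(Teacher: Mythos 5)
Your argument is correct, but note that the paper itself does not prove this proposition: it is quoted from Bursztyn--Fernandes \cite{Burz5} (the statement goes back to Bursztyn--Weinstein \cite{Burz2}), so there is no in-paper proof to compare against. Your proof is essentially the standard one from those references, and all the key steps check out: the unit section of $(\G_2,\Omega_2)$ is Lagrangian, so $x\mapsto(\u_2(f(x)),x)$ is a Lagrangian bisection of $P_F$ (and Lagrangian bisections are transported by bimodule isomorphisms, since those are symplectomorphisms commuting with the moment maps); conversely, the map $F$ defined by $F(g_1)\cdot\sigma(\s(g_1))=\sigma(\t(g_1))\cdot g_1$ is well defined and smooth by principality of the left action on $J_1$-fibers, is a homomorphism by the commuting-actions computation you give, and the two factorizations of $\psi=\m_R\circ(\sigma\circ\t,\Id)=\m_L\circ(F,\sigma\circ\s)$ together with left/right multiplicativity of $\Omega^P$ and $\sigma^*\Omega^P=0$ yield $F^*\Omega_2=\Omega_1$ --- this is indeed exactly where the Lagrangian hypothesis enters, since a mere bisection would only give $F^*\Omega_2=\Omega_1+\s^*\sigma^*\Omega^P-\t^*\sigma^*\Omega^P$. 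Your appeal to the opposite bimodule $\overline{P}$ with the bisection $\sigma\circ f^{-1}$ to produce a two-sided inverse of $F$ is the right way to get invertibility (note that $F^*\Omega_2=\Omega_1$ alone only makes $F$ a local diffeomorphism). The final isomorphism $\Phi(g,x)=g\cdot\sigma(x)$ and the verification $\Phi^*\Omega^P=\pr_1^*\Omega_2$ via left multiplicativity are also correct.
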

For a symplectic manifold $M$ the obstructions to finding a symplectomorphism $F:\Sigma(M) \to \Sigma(M)$ such that $P_F \isom P$ become a symplectic version of the Nielson realization problem (see \cite{Burz5}). For b-symplectic manifolds the obstructions are even more complicated, but when 
$M$ is a closed b-symplectic surface these obstructions vanish \cite{Radko} and one can always find a bisection.

Bimodules can be composed via a tensor product operation: given a $(\G_2,\G_1)$-bimodule $P$ and a $(\G_3,\G_2)$-bimodule $P'$, then we obtain a $(\G_3,\G_1)$-bimodule:
\[ P' \otimes_{\G_2} P := P' \times_{M_2} P/ \sim, \]
where $(q \cdot  g,p) \sim (q, g \cdot p)$. The symplectic structure on this bimodule is the one induced from $\pr_{P'}^*\Omega_{P'}-\pr_{P}\Omega_{P}$. It is useful to denote the equivalence class of $(p,q)$ under this relation as $q \otimes p$. Some basic properties of this operation are:
\begin{itemize}
\item \emph{associativity}: given a $(\G_2,\G_1)$-bimodule $P$, a $(\G_3,\G_2)$-bimodule $P'$ and a $(\G_4,\G_3)$-bimodule $P''$, there is an isomorphism of symplectic bimodules:
\[ (P''\otimes_{\G_3}P')\otimes_{\G_2}P\simeq P''\otimes_{\G_3}(P'\otimes_{\G_2}P); \]
\item \emph{identity}: given a  $(\G_2,\G_1)$-bimodule $P$, there are isomorphisms of symplectic bimodules:
\[ (P\otimes_{\G_1} \G_1)\simeq P \simeq \G_2\otimes_{\G_2}P; \]
\item \emph{inverse}: given a $(\G_2,\G_1)$-bimodule $P$,  there are isomorphisms of symplectic bimodules:
\[ (P\otimes_{\G_1} \overline P\simeq \G_2,\quad  \overline{P}\otimes_{\G_2}P\simeq \G_1; \]
\item \emph{functoriality}: if $F:\G_1\to\G_2$ and $G:\G_2\to \G_3$ are isomorphism of symplectic bimodules, then:
\[ P_G \otimes P_F \isom P_{G \circ F}. \]
\end{itemize}
These properties motivate the introduction of the following group:
\begin{definition}
The \emph{Picard group} $\Pic(\G)$ of a symplectic groupoid $\G$ is the group of isomorphism classes of $(\G,\G)$-bimodules with product induced from the tensor product. When $\G \isom \Sigma(M)$ we denote $\Pic(\G)$ by $\Pic(M)$ and call $\Pic(M)$ the \emph{Picard group} of $M$.
\end{definition}
The Picard group was first introduced by Burzstyn and Weinstein in \cite{Burz2}. It is clear that Morita equivalent symplectic groupoids have isomorphic Picard groups.
\begin{example}[Picard group of a closed b-symplectic surface]
Radko and Shylakthenko in \cite{Radko} computed the Picard group of any toplogical stable surface. They showed that for a toplogical stable surface $M$, every symplectic $(\Sigma(M),\Sigma(M))$-bimodule admits a lagrangian bisection, from which it follows that the Picard group of such a surface is isomorphic to the group of outer Poisson automorphisms.
\[ \Pic(M) \isom \mathbf{OutPoiss}(M) \]
Their result depends critically on the Dehn-Nielsen-Baer theorem which is false for dimensions greater than 2. They went on to describe this group with the help of labeled graphs. This is the primary inspiration for our definition of a \emph{discrete presentation} which we will use to both classify and compute invariants for stable b-symplectic manifolds in higher dimensions.
\end{example}
\subsection{The Picard Lie algebra}
The Lie algebra of the Picard group was defined and studied by Burzstyn and Fernandes in \cite{Burz5}. In general, the Picard group of a Poisson manifold can be infinite dimensional, however Corollary 1.3 from \cite{Burz5} says that the Picard Lie algebra, $\mathfrak{pic}(M)$, fits into a long exact sequence together with the Poisson and de Rham cohomologies of $M$:
\[
\xymatrix@C=17pt{\dots \ar[r] & H^1_\dR(M) \ar[r] & H^1_\pi(M) \ar[r] & \mathfrak{pic}(M) \ar[r] & H^2_\dR(M) \ar[r] & H_\pi^2(M) \ar[r] & \dots}
\]
Applying \cref{thm:bcohomology}, the map $H^n_\dR(M) \to H^n_\pi(M)=H^n_\dR(M)\oplus H^{n-1}_\dR(Z)$ is just injection to the first coordinate. Hence the exact sequence becomes:
\[
\xymatrix@C=17pt{H^1_\dR(M)\ \ar@{^{(}->}[r] & H^1_\dR(M) \oplus \R^N \ar[r] & \mathfrak{pic}(M) \ar[r] & H^2_\dR(M)\ \ar@{^{(}->}[r] &H^2_\dR(M) \oplus H^1_\dR(Z)}
\]
where $N$ is the number of connected components of $Z$. We conclude that:
\begin{proposition}
If $M$ is a stable b-symplectic manifold whose singular locus $Z$ has $N$ connected components, then its Picard Lie algebra is the $N$-dimensional abelian Lie algebra:
\[ \mathfrak{pic}(M) \isom \R^N. \]
In particular, the Picard group of $M$ is finite dimensional.
\end{proposition}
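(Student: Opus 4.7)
The plan is a short diagram chase in the long exact sequence displayed just above the statement, using only the identification of Poisson cohomology from \cref{thm:bcohomology}.

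First, I would confirm that the natural map $H^n_\dR(M)\to H^n_\pi(M)\simeq H^n_\dR(M)\oplus H^{n-1}_\dR(Z)$ is, under the identification of \cref{thm:bcohomology}, the inclusion into the first summand; this is essentially asserted just before the statement. In particular, both the $n=1$ and the $n=2$ maps are injective. Exactness at $H^2_\dR(M)$ combined with injectivity at $n=2$ forces the connecting morphism $\mathfrak{pic}(M)\to H^2_\dR(M)$ to be zero. Exactness at $\mathfrak{pic}(M)$ then implies that the preceding map $\varphi\colon H^1_\pi(M)\to\mathfrak{pic}(M)$ is surjective; exactness at $H^1_\pi(M)$ together with injectivity at $n=1$ identifies $\ker\varphi$ with the first summand $H^1_\dR(M)$. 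Passing to the quotient yields a linear isomorphism $\R^N\xrightarrow{\sim}\mathfrak{pic}(M)$.

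To upgrade this to an isomorphism of Lie algebras I would check that the induced bracket on $\mathfrak{pic}(M)$ is abelian. Since the Burzstyn--Fernandes exact sequence is one of Lie algebras, the bracket on $\mathfrak{pic}(M)$ is the one induced by $\varphi$ from the bracket on $H^1_\pi(M)$. The $\R^N$ summand admits, via \cref{thm:bcohomology}, representative Poisson vector fields localized in pairwise disjoint tubular neighborhoods of the components $Z_i$ of $Z$ (the semi-local model of \cref{defmappingtoruspoiss} provides such representatives). Any two such vector fields have disjoint support, so their Lie bracket vanishes on the nose, and therefore vanishes in $\mathfrak{pic}(M)$ as well.

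Finite dimensionality of $\Pic(M)$ is then immediate from the identification of its Lie algebra with $\R^N$. I do not foresee serious obstacles; the step deserving most care is verifying that $H^1_\dR(M)\to H^1_\pi(M)$ is the inclusion into the first summand, which I would confirm by unpacking the identification of \cref{thm:bcohomology} rather than quoting it as a black box.
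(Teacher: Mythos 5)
Your proposal is correct and follows essentially the same route as the paper: the dimension count comes from the Bursztyn--Fernandes long exact sequence together with \cref{thm:bcohomology}, and abelianness comes from representing the $\R^N$ summand by Poisson (modular) vector fields supported in pairwise disjoint tubular neighborhoods of the components $Z_i$, whose brackets therefore vanish. Your diagram chase is simply a more explicit spelling-out of what the paper compresses into ``the long exact sequence argument above gives the dimension.''
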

\begin{proof}
The long exact sequence argument above gives the dimension of $\mathfrak{pic}(M)$. Moreover, by choosing appropriate volume forms in the local model, one can find compactly supported modular flows around each connected component of $Z$, which lead to $N$ commuting Poisson vector fields. The time-1 flows of these vector fields yield a $N$ dimensional family of bimodules. Hence, the Picard Lie algebra is abelian and the connected component of the identity of $\Pic(M)$ is a quotient of $\R^N$ by some (possibly trivial) discrete subgroup.
\end{proof}

\section{Strategy of the proof}\label{sectionstrategyofproof}
Our final aim is to prove \cref{maintheorem1}, describing the Picard group $\Pic(M)$ of a stable b-symplectic manifold $M$. The main steps in the proof are:
\begin{enumerate}
\item[Step 1.] Reduce the computation of $\Pic(M)=\Pic(\Sigma(M))$ to the computation of $\Pic(\G)$, where $\G \rightrightarrows \C$ is a symplectic groupoid integrating a disjoint union $\C$ of affine cylinders (see \cref{caffdef}). 
\item[Step 2.] Describe $\Pic(\G)$ in terms of discrete data associated with a graph whose vertices are the connected components of $M-Z$ and whose edges are the connected components of $Z$.
\end{enumerate}
\Cref{subsection:reduction} takes care of Step 1, while Step 2 we will be taken care of in the remaining sections. \Cref{subsection:gluing} proves a simple lemma about gluing bimodules. Finally \Cref{subsection:pointedbimodules} will define \emph{pointed bimodules} which will be a bridge between discrete data and geometric data.
\subsection{Reduction to 2 dimensions}\label{subsection:reduction}
Step 1 will follow from restricting $\Sigma(M)$ to a complete 2-dimensional Poisson transversal $\C$: $\Sigma(M)$ and $\G=\Sigma(M)|_{\C}$ are Morita equivalent (see \cref{example:transversal}) so that $\Pic(\Sigma(M))=\Pic(\G)$. We will use the semi-local models around the singular hypersurface of a stable b-symplectic manifold  to construct the Poisson transversal.

Let the singular hypersurface $Z$ be decomposed into a disjoint union of connected components $Z = \sqcup_{i \in I} Z_i$. By the definition of stable b-symplectic structure, each $Z_i$ has trivial normal bundle and is a symplectic mapping torus $T_{f_i}$, for some symplectomorphism $f_i: L_i \to L_i$.  We have a Poisson diffeormorphism 
\[ \phi_i:U_i\to \R \times Z_i,\] 
defined on an open neighborhood $U_i$ of $Z_i$, where $\R \times Z_i$ is furnished with the Poisson structure:
\[ \pi=\frac{x}{\rho_i}\dd{\theta}\wedge \dd{x} + \pi^{f_i}, \]
where $\rho_i$ be the modular period of $Z_i$. We think of $L_i$ as a  leaf of $Z_i$, i.e, a fiber of the mapping torus $p: Z_i \to \S^1$. Also, we can assume that $f_i$ is the holonomy of the flat connection on $p: Z_i \to \S^1$ induced by the modular vector field on $Z_i$. 
\begin{lemma}
\label{transversallemma}
For each component $Z_i$ there exists an embedded Poisson transversal:
\[ \iota_i: (-\epsilon,\epsilon) \times \S^1 \hookrightarrow M , \] 
with induced Poisson structure the b-symplectic affine cylinder $ \caff^{\rho_i}$:
\[ \iota_i^* (\pi)=  \frac{x}{\rho_i}\dd{\theta}\wedge \dd{x}. \]
\end{lemma}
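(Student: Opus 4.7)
The plan is to work inside the semi-local tubular neighborhood $U_i\cong \R\times T_{f_i}$ guaranteed by stability, taking the transversal to be the product of the normal coordinate $x$ with the image of a smooth section $\sigma\colon \S^1\to T_{f_i}$ of the mapping torus projection $p\colon T_{f_i}\to \S^1$. Explicitly I would set
\[ \iota_i(x,\theta) := \phi_i^{-1}(x,\sigma(\theta)) \]
on $(-\epsilon,\epsilon)\times \S^1$, with $\epsilon$ small enough that the image lies in $U_i$, and then check Poisson transversality and compute the induced Poisson structure.

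The first substantive step is the construction of $\sigma$, which is the step I expect to be the main obstacle: the modular holonomy $f_i$ need not have any fixed points, so a constant section typically does not exist. Writing $T_{f_i} = (\R\times L_i)/\!\sim$ with $(y+1,p)\sim(y,f_i(p))$, a section corresponds to a smooth map $\til\sigma\colon \R\to L_i$ satisfying the twisted periodicity $\til\sigma(y+1) = f_i^{-1}(\til\sigma(y))$. Fix $p_0\in L_i$; using connectedness of $L_i$, I would choose a smooth path $\gamma\colon [0,1]\to L_i$ from $p_0$ to $f_i^{-1}(p_0)$ whose derivatives of all orders vanish at the endpoints (a standard bump-function construction). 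Extending $\gamma$ by iterating $f_i^{\pm 1}$ produces a smooth $\til\sigma$ with the required twisted periodicity, whence the desired embedded section $\sigma$.

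The remaining steps should be routine. At each $(x,\sigma(\theta))$ the image of $d\iota_i$ is spanned by $\partial_x$ and a vector projecting nontrivially to $\partial_\theta$ on $\S^1$. For $x\neq 0$ the Poisson structure is symplectic, so Poisson transversality reduces to nondegeneracy of $\iota_i^*\pi\inv$; for $x=0$ the symplectic leaf through $\sigma(\theta)$ is the fiber $L_i^{\theta}$ of the mapping torus, and a dimension count using $d\sigma(\partial_\theta)\notin TL_i^{\theta}$ gives $T\iota_i\oplus TL_i^{\theta} = T(\R\times T_{f_i})$. To identify the induced Poisson structure, I would pass to the dual b-symplectic form $\pi\inv = \tfrac{\rho_i}{x}\diff x\wedge \diff\theta + \omega^{L_i}$ on the regular locus; since $\sigma$ is one-dimensional, $\sigma^*\omega^{L_i}$ vanishes for purely dimensional reasons, so $\iota_i^*\pi\inv = \tfrac{\rho_i}{x}\diff x\wedge\diff\theta$, whose dual bivector is $\tfrac{x}{\rho_i}\partial_\theta\wedge \partial_x$. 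By smoothness of the Poisson transversal construction and density of $\{x\neq 0\}$, this formula extends across $x=0$, matching $\caff^{\rho_i}$ exactly.
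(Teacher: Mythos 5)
Your proposal is correct and follows the same route as the paper: the paper's proof likewise takes a section $\gamma_i:\S^1\to Z_i$ of the mapping-torus fibration (whose existence it simply asserts from connectedness of $L_i$), extends it in the normal direction using the triviality of the normal bundle, and reads off the pullback formula from the semi-local model. You have merely filled in the details the paper leaves implicit — the explicit twisted-periodic construction of the section and the transversality and pullback computations — all of which are sound.
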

\begin{proof}
Since $L_i$ is connected there exists a section $\gamma_i: \S^1 \to Z_i$ of $p: Z_i \to \S^1$. Since the normal bundle to $Z_i$ is trivial we can extend $\gamma_i$ to a 2 dimensional embedded submanifold
\[ \iota_i: (-\epsilon,\epsilon) \times \S^1 \to \R \times Z_i. \]
The first coordinate corresponds to the normal directions to $Z_i$ so that $\iota_i|_{\{ 0 \} \times \S^1} = \gamma_i$. Thus the image of $\iota_i$ is transversal to each leaf. It is clear from \cref{defmappingtoruspoiss} that the pullback by $\iota_i^* \pi\inv$ of the b-form will satisfy the (inverse) of the formula above.
\end{proof}
Fix an orientation for $M$. We say that an open symplectic leaf of $M$ is \emph{positive} (respectively, \emph{negative}) if the orientation provided by the symplectic form coincides (respectively, is the opposite) of the orientation of $M$. Notice that we can orient the transversal of the previous lemma such that $\iota_i(x,\theta)$ lies in a positive (respectively, negative) leaf if and only if $x$ is positive (respectively, negative). We will assume that we have done this from now on.
\begin{corollary}
Suppose $(M,\pi)$ is a stable b-symplectic manifold. There exists a complete Poisson transversal $\iota: \C \hookrightarrow M$, where $\C = \sqcup_{i \in I} C_i$ is a disjoint union of affine cylinders $C_i\simeq \caff^{\rho_i}$ with modular period $\rho_i$. In particular, $\Sigma(M)$ is Morita equivalent to the restriction:
\[ \G := \Sigma(M)|_{\iota(\C)}. \]
\end{corollary}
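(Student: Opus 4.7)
The plan is to glue together the local transversals produced by \cref{transversallemma}, one per component of $Z$, and then verify completeness of the resulting Poisson transversal. First, I would index the components of the singular locus as $Z = \sqcup_{i \in I} Z_i$ (a finite index set since $Z$ is compact by \cref{GMPtorus}) and apply \cref{transversallemma} to each $Z_i$ separately, producing Poisson embeddings $\iota_i : C_i \hookrightarrow M$ with $C_i \simeq \caff^{\rho_i}$. By shrinking the parameter $\epsilon$ in the lemma if necessary, I can assume the images $\iota_i(C_i)$ sit inside pairwise disjoint tubular neighborhoods of the $Z_i$, so that $\iota := \sqcup_i \iota_i : \C \hookrightarrow M$ with $\C = \sqcup_i C_i$ is a well-defined embedded Poisson transversal.

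The content of the statement is completeness: $\iota(\C)$ must meet every symplectic leaf. For the \emph{singular} leaves, I would observe that the image $\iota_i(\{0\} \times \S^1) = \gamma_i(\S^1)$ is a section of the mapping torus projection $p_i : Z_i \to \S^1$; since the leaves of $Z_i$ are the fibers of $p_i$, this section hits each of them exactly once. For the \emph{open} leaves, I would use that each connected component $U$ of $M - Z$ has some $Z_i$ in its topological boundary (otherwise $U$ would be clopen in $M$, contradicting connectedness, which we may assume after treating each component of $M$ separately). For such a $Z_i$, the tubular neighborhood description $\phi_i : U_i \to \R \times Z_i$ of \cref{transversallemma} identifies $U \cap U_i$ with either the positive or negative half of $\R \times Z_i$, and $\iota_i$ hits both halves in the $x$-coordinate.

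Once completeness is established, the Morita equivalence statement follows immediately from \cref{example:transversal}, which shows that restriction of a symplectic groupoid to a complete Poisson transversal yields a Morita equivalent symplectic groupoid; applied to $\G = \Sigma(M)|_{\iota(\C)}$ this gives the desired conclusion.

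The only mildly delicate step is verifying that every component of $M-Z$ actually borders some $Z_i$; this is clear in the connected case but worth stating carefully, and it is the only place the connectedness/compactness hypotheses implicitly enter. The rest is essentially bookkeeping on top of \cref{transversallemma} and the transversal example.
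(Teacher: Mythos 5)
Your proposal is correct and follows the same route as the paper: take the disjoint union of the transversals from \cref{transversallemma}, observe that it meets every orbit (hence is a complete Poisson transversal), and invoke \cref{example:transversal} for the Morita equivalence. The paper states this in two lines, while you supply the verification that every open orbit borders some $Z_i$; that is a reasonable detail to make explicit but not a different argument.
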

\begin{proof}
We take for $\C$ the disjoint union of the transversals constructed in \cref{transversallemma}. Since the embedding $\C \to M$ intersects every orbit of $M$, it is a complete Poisson transversal. Hence, $\G$ and $\Sigma(M)$ are Morita equivalent (see \cref{example:transversal}).
\end{proof}
The groupoids $\G$ integrating disjoint unions of affine cylinders arising from a complete Poisson transversal are not totally arbitrary. For example, they share with $\Sigma(M)$ one useful feature: the modular vector field can be lifted to a family of symplectic groupoid automorphisms of $\G$ as shown by the next lemma.
\begin{lemma}
Let $M$ be a stable b-symplectic manifold with a complete Poisson transversal $\iota: \C \to M$. There exists a choice of volume form $\mu$ on $M$ such that the modular vector field $X_\mu$ is tangent to $\iota(\C)$. In particular, if $\G=\Sigma(M)|_{\iota(\C)}$, then the 1-parameter family of symplectic groupoid automorphisms $\Phi^t:\Sigma(M)\to\Sigma(M)$ induced by $X_\mu$ preserves $\G$.
\end{lemma}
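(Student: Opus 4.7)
The plan is to construct $\mu$ by modifying an arbitrary volume form so that $X_\mu$ becomes tangent to $\iota(\C)$, and then to observe that this tangency lifts to the desired preservation of $\G$.

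Start with any global volume form $\mu_0$ on $M$ (available since $M$ is orientable) and let $V:=X_{\mu_0}$. Any other volume form has the form $\mu=e^{-f}\mu_0$ for some smooth $f:M\to\R$, and then $X_\mu=V-X_f$; the goal becomes to find $f$ such that $V(p)-X_f(p)\in T_p\iota(\C)$ for all $p\in\iota(\C)$. Because $\iota(\C)$ is a Poisson transversal, at each $p\in\iota(\C)$ one has $T_p M = T_p\iota(\C)\oplus K_p$ with $K_p:=\pi^\sharp(N_p^*\iota(\C))$, and $\pi^\sharp$ restricts to an isomorphism $N_p^*\iota(\C)\to K_p$. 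Writing $V(p)=A(p)+B(p)$ in this splitting, there is a unique $\alpha(p)\in N_p^*\iota(\C)$ with $\pi^\sharp(\alpha(p))=B(p)$. I will construct $f$ with $f|_{\iota(\C)}=0$ and $df(p)|_{K_p}=\alpha(p)|_{K_p}$ for all $p\in\iota(\C)$; this is a pointwise prescription of the 1-jet of $f$ along the properly embedded submanifold $\iota(\C)$, and such $f$ can be produced by a standard partition-of-unity argument on a tubular neighborhood of $\iota(\C)$. With this $f$, the vanishing of $f$ along $\iota(\C)$ kills the tangential part of $df$, so $X_f(p)=\pi^\sharp(\alpha(p))=B(p)$, giving $X_\mu(p)=A(p)\in T_p\iota(\C)$, as required.

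For the second assertion, $X_\mu$ is a Poisson vector field, so its flow $\phi^t$ consists of Poisson diffeomorphisms of $M$; using source-simple-connectedness of $\Sigma(M)$, each $\phi^t$ lifts to a symplectic groupoid automorphism $\Phi^t:\Sigma(M)\to\Sigma(M)$ covering $\phi^t$. Since $X_\mu$ is tangent to $\iota(\C)$, the flow $\phi^t$ preserves $\iota(\C)$, and then the intertwining relations $\s\circ\Phi^t=\phi^t\circ\s$ and $\t\circ\Phi^t=\phi^t\circ\t$ immediately give $\Phi^t(\G)=\G$.

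The only real subtlety is the construction of $f$ with the prescribed 1-jet; this is where the Poisson transversal splitting does the work, converting the geometric tangency condition on the Hamiltonian vector field $X_f$ into a pointwise condition on the normal component of $df$. Once that step is in hand, everything else follows from general facts about Poisson vector fields and their canonical lifts to $\Sigma(M)$.
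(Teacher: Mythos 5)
Your proof is correct, but it takes a genuinely different route from the paper's. The paper works entirely inside the semi-local model $\R\times T_{f_i}$ around each component $Z_i$: it lifts the section $\gamma_i$ of the mapping torus to a curve in $[0,1]\times L_i$, realizes that curve as an integral curve of a time-dependent Hamiltonian flow on the leaf $L_i$, and assembles the resulting family $g_t$ into a globally defined correction $g$ supported near $Z_i$, so that $X_\omega+X_g$ follows $\gamma_i$. You instead use the intrinsic Poisson-transversal splitting $T_pM=T_p\iota(\C)\oplus\pi^\sharp\bigl(N_p^*\iota(\C)\bigr)$, decompose the modular vector field accordingly, and solve for the Hamiltonian correction by prescribing the $1$-jet of $f$ along $\iota(\C)$; the key point that $\pi^\sharp$ is injective on the conormal (which you should verify also at points of $Z\cap\iota(\C)$, where it follows from transversality to the codimension-two leaves of $Z$) makes $\alpha(p)$ well defined and smooth. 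Your argument is more general -- it shows that for any Poisson transversal in any Poisson manifold the modular class admits a tangent representative -- and it handles tangency along the entire two-dimensional cylinder uniformly, whereas the paper's construction directly controls the vector field only along $Z_i$ and leans on the product form of the local model for the rest. What the paper's approach buys is explicitness: the correction is visibly supported near $Z_i$ and is expressed through the holonomy $f_i$, which is the same data that reappears later in the identification $\hol^i=(f_i)_*$. The one place where you should be slightly more careful is the jet-extension step: the cylinders $(-\epsilon,\epsilon)\times\S^1$ are embedded but not closed in $M$, so either shrink the transversal so that it is properly embedded, or perform the cutoff with a function equal to $1$ on $\iota(\C)$ itself (rather than on a neighborhood), using that $\tilde f$ already vanishes along $\iota(\C)$ so the cutoff does not disturb the prescribed $1$-jet.
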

\begin{proof}
Let $\omega$ be any choice of volume form on $M$. We claim that we there exists a smooth function $g: M \to \R$ such that the modular vector field associated to the volume form $\mu := e^g \omega$ is tangent to the embedding  $\iota: \C \to M$.

We will perform this adjustment locally around each $Z_i$. Let $\gamma_i$ be the section of $M_{f_i} \to \S^1$ as in \cref{transversallemma}. We can lift $\gamma_i$ to a curve $\til \gamma_i: [0,1] \to [0,1] \times L_i$ which respects the equivalence relation $(0,f_i(p)) \sim (1,p)$. Without loss of generality assume that $\til \gamma_i$ is constant near the endpoints.

Let $g_t: L_i \to \R$ be a time dependent family of smooth functions such that $\pr_2 \circ \til \gamma_i$ is an integral curve of the (time dependent) Hamiltonian vector field $X_{g_t}$. Again, we can assume without loss of generality that the $g_t$ are zero near the endpoints of $[0,1]$. We can think of the family $g_t$ as a function $\til g: [0,1] \times L_i \to \R$. Since $g_t$ is trivial near the endpoints, it respects the equivalence relation $\sim$ and descends to a function $g: M_{f_i} \to \R$.

Let $X_g$ be the Hamiltonian vector field associated to $g$. If $X_\omega$ is the modular vector field relative to $\omega$ then $X_\omega + X_g$ is tangent to $\iota$ and hence
\[ X_\omega + X_g = X_{e^g \omega} = \X_\mu , \]
is tangent to the embedding $\iota_i$. By choosing a local bump function around $Z_i$ we can arrange that the support of $g$ to be contained in a small neighborhood of $Z_i$.
\end{proof}
\begin{remark}
Although the transversals we have defined are only `finite' cylinders of the form $(-\epsilon,\epsilon) \times \S^1$, any integration of a finite cylinder is Morita equivalent to an integration of $\R \times \S^1$.
\end{remark}
We summarize this discussion for future reference in the following proposition.
\begin{proposition}
Suppose $M$ is a stable b-symplectic manifold. Then $\Sigma(M)$ is Morita equivalent to an integration $\G$ of a disjoint union $\C$ of affine cylinders. Furthermore $\G$ satisfies the following properties:
\begin{enumerate}[(a)]
    \item the restriction of $\G$ to any single cylinder has connected orbits;
    \item the open orbits of $\G$ can be split into two categories, positive and negative. Positive (respectively, negative) orbits are disjoint unions of positive (respectively, negative) half cylinders;
     \item there exists a smooth family of symplectic groupoid automorphisms $\Phi^t: \G \to \G$ covering the flow of the modular vector field $X_\mu$, where $\mu$ is the standard volume form on $\C$.
\end{enumerate}
\end{proposition}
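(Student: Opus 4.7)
The plan is to assemble the pieces already established in this section. The Morita equivalence $\Sigma(M)\sim \G$ was proved in the preceding corollary via the complete Poisson transversal $\iota:\C\hookrightarrow M$, so the first sentence of the proposition needs no further argument; the remaining task is to verify (a), (b), (c) by unpacking the construction of $\iota$ and invoking the previous modular-vector-field lemma.

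For (a), the orbits of $\G=\Sigma(M)|_{\iota(\C)}$ are exactly the intersections of the symplectic leaves of $M$ with $\iota(\C)$. Restricting to a single cylinder $C_i$, the construction in \cref{transversallemma} was built from a section $\gamma_i:\S^1\to Z_i$ of the mapping torus projection $p:Z_i\to \S^1$, so each fiber $L_i$ of $p$ meets $\gamma_i(\S^1)$ in exactly one point; thus the orbits of $\G|_{C_i}$ on the central circle are singleton points and trivially connected. On either side of $Z_i$, the local model $\R\times T_{f_i}$ has a unique open symplectic leaf, and each intersects $\iota(C_i)$ in the connected half-cylinder $(0,\epsilon)\times \S^1$ or $(-\epsilon,0)\times \S^1$. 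Property (b) then follows from the orientation convention fixed just before the corollary: since the sign of $x$ in the local model corresponds to the sign of the orientation on the adjacent open leaf of $M$, each positive (respectively, negative) open orbit of $\G$, being the intersection of one open leaf of $M$ with $\iota(\C)$, is a disjoint union of positive (respectively, negative) half-cylinders, one component per $Z_i$ bordering that leaf.

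For (c), the idea is to start with the volume form $\mu'$ supplied by the previous lemma, so that $X_{\mu'}$ is tangent to $\iota(\C)$, and then calibrate it. In the local model $\R\times T_{f_i}$ the canonical volume has modular vector field $\rho_i\partial_\theta$, which agrees by \cref{caffdef} with the standard modular vector field of $\caff^{\rho_i}$. Using the gauge relation $X_{e^g\mu'}=X_{\mu'}+X_g$ for $g$ supported in a tubular neighborhood of each $Z_i$ and chosen to absorb the discrepancy, one obtains $\mu=e^g\mu'$ with $X_\mu$ still tangent to $\iota(\C)$ and restricting there to the standard modular vector field on $\C$. Since $X_\mu$ is Poisson on $M$, it lifts canonically to a multiplicative symplectic vector field on $\Sigma(M)$ whose flow $\Phi^t$ consists of symplectic groupoid automorphisms; tangency of $X_\mu$ to $\iota(\C)$ then forces $\Phi^t$ to preserve $\G=\Sigma(M)|_{\iota(\C)}$, and the induced base flow on $\C$ is the required one. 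The main technical obstacle I anticipate is precisely this calibration, namely checking that one can simultaneously preserve the tangency constraint of the previous lemma and match the \emph{standard} modular vector field on $\C$ rather than some gauge-equivalent one; working in the coordinates of \cref{defmappingtoruspoiss} makes this straightforward, and everything else is bookkeeping from earlier results.
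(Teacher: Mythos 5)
Your proposal is correct and follows essentially the same route as the paper, which offers no separate proof of this proposition but explicitly presents it as a summary of the preceding corollary (the complete transversal $\iota:\C\hookrightarrow M$), the lemma on making $X_\mu$ tangent to $\iota(\C)$, and the orientation convention fixed after \cref{transversallemma}. Your extra care in (c) about calibrating the gauge so the restricted modular vector field is the \emph{standard} one on $\C$ (rather than merely tangent) is a detail the paper glosses over, but it is handled the same way, by a Hamiltonian adjustment supported near each $Z_i$.
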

\begin{definition}\label{defnatural}
Suppose $\C$ is a disjoint union of affine cylinders. An integration $\G$ of $\C$ satisfying (a)-(c) above is said to be \emph{natural}.
\end{definition}
It will be our goal to find all natural integrations $\G$ of $\C$. In order to do this, we will first classify natural integrations of the affine plane in \cref{sectionaffineplane}, which will enable us to classify integrations of the affine cylinder in \cref{sectionaffinecylinder}. Next, we will need to extend the classification of natural integrations of the affine cylinder to a disjoint union of affine cylinders. The needed data will be a labeled graph called the \emph{discrete presentation} consisting of a labeled graph which encodes the topology of the orbit space together with isotropy and holonomy data. 
\subsection{Gluing bimodules}\label{subsection:gluing}
Once one describes the integrations $\G$ of $\C$ in terms of a discrete presentation, we will see that it is possible to describe the bimodules of $\G$. In order to do this, ones needs a gluing lemma for bimodules.

Let $\G \rightrightarrows M$ be any symplectic groupoid. Suppose $\{ U_i \}_{i \in I}$ is a saturated open cover of $M$, so each $U_i$ is a collection of orbits of $\G$. We set $U_{ij}:=U_i \cap U_j$.
\begin{lemma}\label{cocyclelemma}
Let $f: I \times I \to I \times I$ be a bijection and assume we have a family of $(\G|_{U_{f(i)}},\G|_{U_i})$-bimodules:
\[
  \raisebox{-0.5\height}{\includegraphics{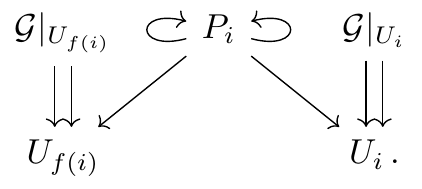}}
\]
Suppose further that for each pair $(i,j)\in I \times I$ we have an isomorphism of bimodules $\phi_{ij}: P_j|_{U_{ij}} \to P_i|_{U_{ij}}$ satisfying the cocycle condition:
\[ \phi_{ij} \circ \phi_{jk} = \phi_{ik}. \]
Then there exists a $(\G,\G)$-bimodule $P$ together with an isomorphism of bimodules $\phi_i:P|_{U_i} \to P_i$ such that:
\[ \phi_{ij} \circ \phi_j = \phi_i. \]
\end{lemma}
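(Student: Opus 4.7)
The plan is a standard descent/gluing argument, so the real content is bookkeeping to check that everything descends coherently. First, I would construct the underlying set of $P$ as the quotient
\[ P := \Big(\bigsqcup_{i \in I} P_i \Big)\Big/ \sim, \]
where $p \sim q$ iff $p \in P_j|_{U_{ij}}$, $q \in P_i|_{U_{ij}}$ and $\phi_{ij}(p) = q$. Reflexivity uses $\phi_{ii} = \Id$ (which follows from the cocycle condition applied to $i=j=k$), symmetry uses $\phi_{ij} \circ \phi_{ji} = \Id$, and transitivity is precisely the cocycle condition. The candidate maps $\phi_i : P|_{U_i} \to P_i$ are the composites of the inclusion $P_i \hookrightarrow \bigsqcup_j P_j$ with the quotient map, inverted; the identity $\phi_{ij} \circ \phi_j = \phi_i$ is built into the construction.

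Next I would put a smooth structure on $P$. Each $\phi_{ij}$ is a diffeomorphism between open subsets $P_j|_{U_{ij}} \subset P_j$ and $P_i|_{U_{ij}} \subset P_i$ (since it is a bimodule isomorphism), so the equivalence relation on $\bigsqcup_i P_i$ restricts to an open-to-open identification by diffeomorphisms — this is exactly the data of an atlas, and $P$ inherits a (possibly non-Hausdorff, as allowed in the paper's groupoid conventions) smooth manifold structure for which each $\phi_i$ is a diffeomorphism onto its image. Saturatedness of the cover $\{U_i\}$ plays the role of ensuring that restriction on the source side $J_1^{-1}(U_i)$ is well-behaved and that the target-side opens $U_{f(i)}$ also assemble correctly; since the $U_i$ are unions of orbits, the restrictions $P_i$ do not cut through orbits on either side.

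Then I would transport the bimodule data. The moment maps $J_1^i : P_i \to U_i$ and $J_2^i : P_i \to U_{f(i)}$, the left $\G|_{U_{f(i)}}$-action, the right $\G|_{U_i}$-action, and the symplectic form $\Omega^{P_i}$ all agree on overlaps through $\phi_{ij}$ because $\phi_{ij}$ is a morphism of symplectic bimodules. Consequently all of these descend to well-defined maps $J_1, J_2 : P \to M$, commuting left and right $\G$-actions, and a $2$-form $\Omega^P$ on $P$, characterized by $\phi_i^* J_{1,2} = J_{1,2}^i$, $\phi_i^* \Omega^P = \Omega^{P_i}$, and equivariance of $\phi_i$. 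Left and right multiplicativity of $\Omega^P$, closedness, and non-degeneracy are local conditions, hence inherited from the $P_i$.

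Finally I would check that $P$ is a principal $(\G,\G)$-bimodule: surjectivity of $J_1, J_2$ as submersions is local (use that $\{U_i\}$ and $\{U_{f(i)}\}$ cover $M$, and $f$ being a bijection ensures the latter is a cover); freeness and transitivity of each action on its moment-map fibers are checked pointwise inside some $U_i$ and hence follow from the principality of $P_i$. This yields the desired $(\G,\G)$-bimodule with the isomorphisms $\phi_i$. The main obstacle I anticipate is purely notational: tracking the target-side index $f(i)$ through the gluing so that the global $J_2 : P \to M$ is well-defined with the correct codomain $M$ (rather than $\bigcup_i U_{f(i)}$), which is precisely where the hypothesis that $f$ is a bijection is used.
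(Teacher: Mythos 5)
Your proposal is correct and follows essentially the same route as the paper: a standard gluing/descent along the saturated cover using the cocycle data, with the paper merely outsourcing the right-principal-bundle part to the Haefliger-cocycle description and then transporting the left actions, where you carry out the quotient construction and the descent of the moment maps, actions, and symplectic forms explicitly. The points you flag (reflexivity/symmetry from the cocycle identity, possible non-Hausdorffness, and the role of the bijection $f$ in making $\{U_{f(i)}\}$ a cover on the target side) are exactly the right ones to check.
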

\begin{proof}
Using the well-known description of principal $\G$-bundles in terms of Haeflieger cocycles, it is clear that one can glue them along a saturated cover, provided they are related on the intersections by isomorphisms satisfying the cocycle condition. Hence, starting with the right principal $\G|_{U_i}$-bundles $P_i$ we construct a right principal $\G$-bundle $P$ and isomorphism of principal $\G|{U_i}$-bundles $\phi_i:P|_{U_i} \to P_i$. These isomorphisms allow to define left $\G|{U_i}$-principal actions on $P|_{U_i}$ from the ones on the $P_i$, commuting with the right action, and which agree on intersections. Hence, we obtain a $\G$-left principal action that commutes with the right $\G$-action, making $P$ is a principal $(\G,\G)$-bimodule, and for which the $\phi_i:P|_{U_i} \to P_i$ become  isomorphism of bimodules.
\end{proof}
\subsection{Pointed bimodules}\label{subsection:pointedbimodules}
\begin{definition}\label{defn:pointedbimodule}
Suppose $\G \rightrightarrows (M, m_0)$ and $\H \rightrightarrows (N, n_0)$ are groupoids over pointed manifolds. We say $(P,p_0)$ is a \emph{pointed bimodule} if $P$ is a $(\G,\H)$-bimodule and the anchor maps are base-point preserving. 
\end{definition}
To any such $(P,p_0)$ there is a canonical isomorphism $\psi_{p_0}: \G_{m_0} \to \H_{n_0}$ such that:
\[ p_0 g = \psi_{p_0}(g) p_0.  \]
An \emph{isomorphism} $\phi: (P,p_0) \to (Q,q_0)$ of pointed bimodules is an isomorphism of bimodules such that the source (target) of $p_0$ and $q_0$ are equal. We say $\phi$ is a \emph{strong isomorphism} if $\phi(p_0) = \phi(q_0)$. To any isomorphism of pointed bimodules, we can associate a unique element $h_\phi \in \H_{n_0}$ characterized by the property:
\[ h_\phi \phi(p_0) = q_0 .\]
We can check easily that $h_\phi$ satisfies:
\[ C_{h_\phi} \circ \psi_{p_0} = \psi_{q_0} . \]  
This leads us to the following lemma:
\begin{lemma}\label{lemma:transbimid}
Suppose $\G \rightrightarrows (M,m_0)$ and $\H \rightrightarrows (N,n_0)$ are transitive groupoids over pointed manifolds. Let $(P,p_0)$ and $(Q,q_0)$ be pointed bimodules. The relation $\phi \mapsto h_\phi$ gives 1-1 correspondence between isomorphisms $\phi: (P,p_0) \to (Q,q_0)$ and elements $h \in H$ such that:
\begin{equation}\label{eqn:conjprop} 
C_h \circ \psi_{p_0} = \psi_{q_0}. 
\end{equation}
\end{lemma}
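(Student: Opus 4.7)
The plan is to use the transitivity of $\G$ and $\H$ to show that any bimodule isomorphism $\phi:P\to Q$ is completely determined by $\phi(p_0)$, and then to show that the conjugation identity $C_h\circ\psi_{p_0}=\psi_{q_0}$ is exactly what is needed in order to freely prescribe $\phi(p_0):=h^{-1}q_0$ consistently. The fact that $h_\phi$ always satisfies the conjugation identity was verified in the text preceding the lemma, so what remains is bijectivity of $\phi\mapsto h_\phi$ onto the indicated set of $h\in\H_{n_0}$.

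Because $\G$ is transitive on $M$ and $\H$ is transitive on $N$, every $p\in P$ can be written as $p=h'\cdot p_0\cdot g$ for some $h'\in\H$ (with $\s(h')=n_0$) and $g\in\G$ (with $\t(g)=m_0$): first use the right $\G$-action to move inside the $J_2$-fiber through $p_0$, then the left $\H$-action to jump to the $J_2$-fiber containing $p$. Any bimodule map must satisfy $\phi(h'\cdot p_0\cdot g)=h'\cdot\phi(p_0)\cdot g$ by equivariance, so $\phi$ is determined by $\phi(p_0)$, and hence by $h_\phi$ via $\phi(p_0)=h_\phi^{-1}q_0$. This immediately gives injectivity.

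For surjectivity, given $h\in\H_{n_0}$ satisfying $C_h\circ\psi_{p_0}=\psi_{q_0}$, I would define $\phi:P\to Q$ by the formula $\phi(h'\cdot p_0\cdot g):=h'\cdot(h^{-1}q_0)\cdot g$. Smoothness, equivariance, and compatibility with the moment maps are routine from local trivializations of the principal bimodule $P$, and by construction $h\cdot\phi(p_0)=q_0$, so $h_\phi=h$. The delicate step, and the main obstacle, is well-definedness of $\phi$. Suppose $h_1\cdot p_0\cdot g_1=h_2\cdot p_0\cdot g_2$; setting $\alpha:=h_2^{-1}h_1$ and $\beta:=g_2g_1^{-1}$ yields $\alpha\cdot p_0=p_0\cdot\beta$. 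Comparing the values of $J_1$ and $J_2$ on both sides forces $\alpha\in\H_{n_0}$ and $\beta\in\G_{m_0}$, and the defining property of $\psi_{p_0}$ then gives $\alpha=\psi_{p_0}(\beta)$. The required well-definedness identity $\alpha\cdot(h^{-1}q_0)=(h^{-1}q_0)\cdot\beta$ rearranges, using $q_0\cdot\beta=\psi_{q_0}(\beta)\cdot q_0$, to $\alpha=h^{-1}\psi_{q_0}(\beta)h=C_{h^{-1}}(\psi_{q_0}(\beta))$. The hypothesis $C_h\circ\psi_{p_0}=\psi_{q_0}$ is precisely the statement that $C_{h^{-1}}(\psi_{q_0}(\beta))=\psi_{p_0}(\beta)=\alpha$ for every $\beta\in\G_{m_0}$. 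In short, the conjugation condition on $h$ is tailored exactly to make the equivariant extension of $\phi(p_0)=h^{-1}q_0$ consistent along the isotropy stabilizer of $p_0$, and this single computation is the only nontrivial content of the argument.
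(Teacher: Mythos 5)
Your proof is correct and follows essentially the same route as the paper's: injectivity via the decomposition $p=h'\cdot p_0\cdot g$ (so that $\phi$ is determined by $\phi(p_0)$), and surjectivity via the explicit formula $\phi(h'\cdot p_0\cdot g)=h'\cdot(h^{-1}q_0)\cdot g$, whose well-definedness is exactly the conjugation condition $C_h\circ\psi_{p_0}=\psi_{q_0}$. You in fact spell out the well-definedness computation that the paper only asserts, which is a welcome addition.
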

\begin{proof}
We begin by commenting that two isomorphisms $\phi_1,\phi_2: P \to Q$ are equal if and only if there exists $p \in P$ such that $\phi_1(p) =\phi_2(p)$. This immediately implies that $\phi \mapsto h_\phi$ is injective.

It only remains to show that given $h \in \H_{n_0}$ we can construct $\phi$ such that $h_\phi = h$. Any $p \in Q$ can be written in the form $p= h_1 p_0 g_1$ for $g_1 \in \G$ and $h_1 \in H$, we define $\phi(p) = h_1 (h\inv q_0) g_1$. Property (\ref{eqn:conjprop}) implies that this definition is invariant with respect to the decomposition of $p$. Clearly $h \phi(p_0) = q_0$ and therefore $h_\phi = h$. 
\end{proof}
By interpreting isomorphisms of bimodules in this way, we get the following useful properties. For $\phi_2: P \to Q$ and $\phi_1: Q \to R$ then:
\begin{equation}\label{eqn:compositionisom}
h_{\phi_1 \circ \phi_2} = h_{\phi_1} h_{\phi_2}.
\end{equation}
On the other hand, given $\phi_1: P_1 \to Q_1$ and $\phi_2: P_2 \to Q_2$ such that $P_1 \otimes P_2$ is defined, then:
\begin{equation}\label{eqn:tensorproductisom}
h_{\phi_1 \otimes \phi_2} = h_{\phi_1} \psi_{p_1}(h_{\phi_2}).
\end{equation}

\section{Picard groups of the affine plane}\label{sectionaffineplane}
In this section, $\G$ will denote a natural integration of $\aff$ (see \cref{defnatural}). We will also denote by $\G^+$ and $\G^-$ the restrictions of $\G$ to the positive plane $\aff^+$ and to the negative plane $\aff^-$. The symbol $\pm$ will be used to indicate that cases for both $+$ and $-$ are being treated simultaneously.

Our aim is to compute $\Pic(\G)$ and we will proceed as follows:
\begin{itemize}
\item in \cref{subshortexactsequence}, we show that any integration $\G$ of $\aff$ arises as a semi direct product  $\G\Aff \times_\aff \K$ where $\K$ is a discrete bundle of Lie groups; 
\item in \cref{subisotropydata}, we will show how to construct $\K$ from discrete data which we will refer to as the \emph{isotropy data} of $\G$;
\item in \cref{submapsofisodata}, we obtain a correspondence between maps of isotropy data and bimodules;
\item finally, in \cref{subbimoveraff}, we compute the Picard group of $\G \rightrightarrows \aff$.
\end{itemize}
\subsection{The short exact sequence of $\G$}\label{subshortexactsequence}

Our goal is to find a `split fibration' of $\G$. Note that $\G\Aff$ is the only source connected integration of $\aff$, up to isomorphism. Hence, $\G^0\simeq \G\Aff$ and we denote by $i: \G\Aff \to \G$ the inclusion. The proof of the following lemma is inspired in Proposition 3 from \cite{Radko}.
\begin{lemma}\label{Gprojection}
There is a Lie groupoid morphism $p:\G \to \G\Aff$ which is split by the canonical map $i: \G\Aff \to \G$.
\end{lemma}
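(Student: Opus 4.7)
The plan is to define $p$ on each stratum of $\aff$ and then verify the pieces fit together smoothly. On the open half-planes $\aff^\pm = \{(x,y) : \pm x > 0\}$, the coadjoint action of $\Aff$ is free and transitive, so $(a,b) \mapsto (a,b)\cdot(x,y)$ is a diffeomorphism $\Aff \to \aff^\pm$. Consequently, for any arrow $g \in \G$ with source $(x,y)$ lying in an open orbit, there is a unique $(a,b) \in \Aff$ with $\t(g) = (a,b)\cdot(x,y)$, and I would set $p(g) := (a, b, x, y) \in \G\Aff$. This is manifestly smooth on the preimage of the open orbits, multiplicative (because composition in $\G\Aff$ is composition of $\Aff$-actions on $\aff$), and restricts to the identity on $i(\G\Aff)$ over $\aff^\pm$.

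The main work is to extend $p$ across the singular locus $Z = \{x=0\}$. On $Z$, each point is its own orbit, so $\G|_Z$ is a bundle of Lie groups over $Z$. Using the naturalness property (c), the 1-parameter family $\Phi^t$ of symplectic groupoid automorphisms covering the modular flow $(x,y)\mapsto(x, y+t)$ trivializes this bundle: the fiber at $(0,y)$ is canonically identified with the Lie group $H := \G_{(0,0)}$, whose identity component is $H^0 = i(\G\Aff_{(0,0)}) \isom \Aff$. Hence, defining $p$ on $\G|_Z$ amounts to producing a smooth homomorphic retraction $\pi : H \to \Aff$ of this inclusion.

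To construct $\pi$, I would extend the open-orbit definition by continuity. Given $h \in H$, pick a local bisection $\sigma : V \to \G$ through $h$, with $V$ an open neighborhood of $(0,0)$ in $\aff$. For $(x,y) \in V$ with $x \neq 0$, the element $p(\sigma(x,y)) \in \G\Aff$ is already defined by the open-orbit formula. One then shows that $p(\sigma(x,y))$ has a limit in $\G\Aff_{(0,0)} = \Aff$ as $(x,y) \to (0,0)$, and that this limit depends only on $h$. Setting $\pi(h)$ equal to this limit, multiplicativity of $p$ on the open part together with continuity yields a group homomorphism $H \to \Aff$ retracting the inclusion, and smoothness of $p$ across $Z$ follows from smoothness of bisections combined with the trivialization provided by $\Phi^t$.

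The hardest part will be the verifications in the previous paragraph: existence of the limit and its independence from the choice of bisection. This is where naturalness is essential. Two bisections through the same $h$ differ by left-multiplication by an element that fixes $h$, and I would need to argue that on the open part this ambiguity is generated by elements whose image under $p$ tends to the identity of $\Aff$ as one approaches $(0,0)$. Equivalently, naturalness should force the Lie group extension $1 \to \Aff \to H \to \pi_0(H) \to 1$ to split canonically, in a manner compatible with the modular $y$-translation symmetry coming from (c); once this splitting is in hand, the relation $p \circ i = \Id$ is automatic from the open-orbit formula.
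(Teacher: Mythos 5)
Your proposal is essentially the paper's own proof: your tautological open-orbit formula (unique $(a,b)$ with $\t(g)=(a,b)\cdot\s(g)$) is exactly the paper's explicit $p(g)=(\log(x_2/x_1),(y_2-y_1)/x_1,x_1,y_1)$, and the extension across $Z$ by taking limits along local bisections, with well-definedness coming from the fact that two bisections through the same arrow differ by a curve in $\G^0\isom\G\Aff$ passing through a unit (so its contribution to the limit vanishes), is precisely the argument the paper carries out. The one divergence is your appeal to the modular flow $\Phi^t$ to trivialize $\G|_Z$, which is not needed -- the paper defines $p$ pointwise on each isotropy group $\G_{(0,y_0)}$ and explicitly remarks afterwards that the lemma uses only connectedness of orbits, not the full naturality hypothesis.
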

\begin{proof}
Recall that $\G\Aff \isom \Aff \times \aff$. Let $g \in \G$ be an arrow with $\t(g) = (x_2,y_2)$ and $\s(g) = (x_1,y_1)$, where $x_1\not=0$. Then we can set:
\[ p(g):= \left( \log \left( \frac{x_2}{x_1}\right), \frac{y_2-y_1}{x_1},x_1,y_1 \right) \]
This map is a local symplectomorphism. Morover, its restriction to $\G^0\simeq \G\Aff$ is easily seen to be the identity, for an arrow $(a,b,x,y)\in  \G\Aff$ has source $(x,y)$ and target $(e^a x, y+xb)$, so that: $p(a,b,x,y)=(a,b,x,y)$ (cf.~\cref{sigmaaffdef}). Hence, $p\circ i$ is the identity and it remains to show that $p$ extends to all of $\G$.

Suppose $g_0 \in \G_{(0,y_0)}$. We choose a local bisection $\sigma$ around $g_0$ and let  $g(t) = \sigma(t,y_0)$, so:
\[ \t(g(t)) = (x(t),y(t)) \qquad \s(g(t)) = (t,y_0). \]
Observe that for $t\ne 0$, one has $p(g(t)) = (\log(x(t)/t),(y-y_0)/t,t,y_0)$. Also:
\[ \lim_{t \to 0} \left( \frac{x(t)}{t} \right) = x'(0)>0, \quad \lim_{x \to 0}  \left( \frac{y(t) - y_0}{t} \right) = y'(0). \]
Since $\diff t \cdot g'(0) = (x'(0),y'(0))$, these limits exist. We define $p$ on all of $\G$ by letting: 
\[ p(g_0) = (\log(x'(0)), y'(0),0,y_0). \]

We must check that this definition is independent of the choice of local bisection $\sigma$. If $\sigma_1$ and $\sigma_2$ are two local sections through $g_0$ we claim that
\[ \diff \t \cdot g'_1(0) = \diff \t \cdot g'_2(0),\]
so it follows that $p$ is well-defined. For this observe that $t\mapsto g_1(t) g_2(t)^{-1}$ is a smooth curve in $\G^0\simeq \G\Aff$ that passes through the point $(0,0,0,0)$ at $t=0$. Hence, if we write $g_1(t) g_2(t)^{-1}=(a(t),b(t),x(t),y(t))\in  \G\Aff$, we have:
\begin{align*}
\t(g_1(t))&=\t(g_1(t) g_2(t)^{-1})=\t(a(t),b(t),x(t),y(t))=(e^{a(t)} x(t), y(t)+x(t)b(t)),\\
\t(g_2(t))&=\s(g_1(t) g_2(t)^{-1})=\s(a(t),b(t),x(t),y(t))=(x(t), y(t)).
\end{align*}
Since we have $x(0)=y(0)=a(0)=b(0)=0$, it follows that:
\[ \diff \t \cdot g'_1(0) = (x'(0),y'(0))=\diff \t \cdot g'_2(0),\]
so the claim follows.

We leave the details of proving that $p$ is differentiable morphism of groupoids to the reader.
\end{proof}
The map $p$ fits into the following short exact sequence of Lie groupoids:
\begin{equation}\label{diagram:splitfibration} \raisebox{-0.5\height}{\includegraphics{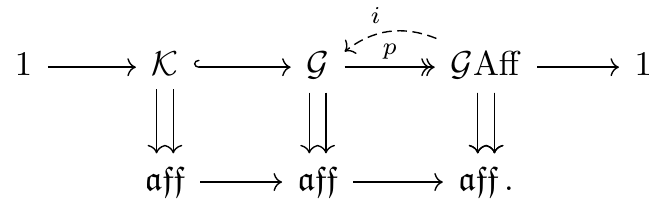}}
\end{equation}
Here $\mathcal{K}$, the kernel of $p$, is a bundle of discrete groups over $\aff$. The map $i$ yields a natural action of $\G\Aff$ on $\K$, and we have that $\G$ is the semi-direct product of $\G\Aff$ and $\K$:
\begin{lemma}
The map $F: \G\Aff \times_{\s,\t} \K \to \G$ such that $F(g,k) = i(g)k$ is an isomorphism.
\end{lemma}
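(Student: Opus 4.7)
The plan is to exhibit an explicit smooth two-sided inverse to $F$ using the splitting $p \circ i = \mathrm{id}$, adapting to the groupoid setting the standard recipe that reconstructs a group from a split short exact sequence. Every arrow $h \in \G$ should decompose uniquely as $h = i(g) k$ with $g := p(h)$ and $k \in \ker p = \K$, which already dictates the formula for the inverse.

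Concretely, I would define $G: \G \to \G\Aff \times_{\s,\t} \K$ by
\[ G(h) := \bigl( p(h),\; i(p(h))\inv \cdot h \bigr). \]
Since $p$ and $i$ are morphisms of Lie groupoids over $\aff$ by \cref{Gprojection}, they preserve source and target, so $\t(i(p(h))) = \t(h)$ and the product $i(p(h))\inv \cdot h$ is well-defined. Applying $p$ to $k := i(p(h))\inv \cdot h$ gives $p(k) = p(h)\inv \cdot p(h) = \u_{\s(h)}$, so $k \in \K$; its target is $\s(i(p(h))) = \s(h) = \s(p(h))$, confirming that $G$ lands in the fibered product. Smoothness of $F$ is immediate, and smoothness of $G$ follows from smoothness of $p$, $i$, and the structure maps of $\G$. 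The two compositions then check out formally: $F(G(h)) = i(p(h)) \cdot i(p(h))\inv \cdot h = h$, and since $p(i(g) k) = g \cdot p(k) = g$ for any $k \in \K$, we obtain $G(F(g,k)) = \bigl(g,\; i(g)\inv \cdot i(g) \cdot k\bigr) = (g,k)$.

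To upgrade the diffeomorphism $F$ to an isomorphism of Lie groupoids, one equips $\G\Aff \times_{\s,\t} \K$ with the semi-direct product structure whose multiplication is
\[ (g_1, k_1)\cdot(g_2, k_2) := \bigl(g_1 g_2,\; (g_2\inv \star k_1)\, k_2\bigr), \]
where $g \star k := i(g) \cdot k \cdot i(g)\inv$ denotes the natural conjugation action of $\G\Aff$ on the bundle of isotropy groups $\K$ induced by the splitting. Compatibility of $F$ with multiplication then reduces to the identity $i(g_1) k_1 \cdot i(g_2) k_2 = i(g_1 g_2) \cdot (g_2\inv \star k_1) k_2$, verified by inserting $i(g_2) \cdot i(g_2)\inv$ between $k_1$ and $i(g_2)$ and using multiplicativity of $i$. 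The only non-formal ingredient in the whole argument is the well-definedness of $G$, which comes down to the two properties $p \circ i = \mathrm{id}$ and $\K = \ker p$ already established in \cref{Gprojection}; I do not anticipate any serious obstacle, as this is essentially the groupoid analogue of the splitting lemma for groups.
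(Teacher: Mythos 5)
Your proof is correct, and it reaches the same semi-direct product decomposition as the paper, but by a more self-contained route. The paper's own argument is indirect: it observes that $p$ is a fibration (it covers the identity and $\G\to\G\Aff$ is a submersion) admitting $i$ as a flat cleavage, and then invokes Mackenzie's theorem on fibrations with flat cleavages (Thm.\ 2.5.3 of the cited book) to conclude that $F$ is an isomorphism. You instead open that black box: the explicit inverse $h\mapsto\bigl(p(h),\,i(p(h))\inv h\bigr)$, together with the verification that $F$ intertwines the semi-direct product multiplication $(g_1,k_1)\cdot(g_2,k_2)=\bigl(g_1g_2,\theta_{g_2\inv}(k_1)k_2\bigr)$ with that of $\G$, is exactly the content of the general theorem specialized to this situation, and it uses only $p\circ i=\mathrm{id}$ and $\K=\ker p$ from \cref{Gprojection} --- you never need the fibration property as such. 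The only point you pass over lightly is that $\G\Aff\times_{\s,\t}\K$ is a manifold and $G$ is smooth, which holds here because $\K\to\aff$ is étale (a bundle of discrete groups) and all structure maps involved are smooth; this is harmless. Either approach is acceptable; yours buys independence from the external reference at the cost of a routine computation.
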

\begin{proof}
The map $p$ is a fibration since it covers a submersion (the identity) and $\G \to \G\Aff \times_\aff  \aff  = \G\Aff$ is a submersion. The canonical map $i: \G \Aff \to \G$ is a section of $p$ so this exhibits a flat cleavage of $p$. Therefore, by \cite{Makenzi} (Thm 2.5.3) the map is an isomorphism.
\end{proof}
More explicitly, the multiplication in $\G\Aff \times_{\aff} \K$ is given by the formula:
\[ (g',k') \cdot(g,k) = (g' g , \theta_{g\inv} (k')k), \] 
where $\theta_g(k)$ denotes the action of an element $g\in \G\Aff$ on $k\in \K$.
\begin{remark}
We note that we have not used all of our naturality assumptions about $\G$. In fact, the above lemmas are true for any integration of $\G$ with connected orbits. 
\end{remark}
\subsection{The isotropy data of $\G$}\label{subisotropydata}
Since $\G \isom \G\Aff \times_\aff \K$, the wide subgroupoid $\K$ determines $\G$ up to isomorphism. Our next task is to show that we can reduce $\K$ to a few pieces of discrete data. Since we can lift the modular vector field $\partial/\partial y$ to a flow of $\G$, we see that $\K|_{x=0}$ is a locally trivial bundle of discrete groups. Let,
\[  G^+ := \K_{(1,0)}, \quad G^-:= \K_{(-1,0)}, \quad H:= \K_{(0,0)}.\]
For each $h \in H$ there is a unique global section $\sigma_h : \aff \to \K$ such that $\sigma_h(0,0) = h$. Similarly, for $g \in G^\pm$ there is a unique section $\sigma_g: \aff^\pm \to \K$ such that $\sigma_g(\pm 1,0) = g$. So we can define maps
\begin{align*}
\aff \times H \to \K,&\quad (x,y,h)\mapsto \sigma_h(x,y),\\
\aff^\pm \times G^\pm \to \K,& \quad (x,y,g)\mapsto \sigma_g(x,y),
\end{align*}
which cover $\K$. Moreover, these give rise to group homomorphisms: 
\[ \phi^\pm: H \to G^\pm,\quad \phi^\pm(h) := \sigma_h(\pm 1,0). \]
\begin{definition}\label{isotropydataofG}
We call \emph{isotropy data} $I$ a pair of homomorphisms $\phi^\pm: H \to G^\pm$, where $H,G^\pm$ are arbitrary discrete groups:
\[I := \left( \raisebox{-0.25\height}{\includegraphics{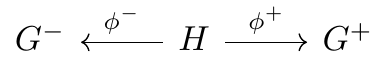}} \right). \]
When $\phi^\pm$ arise from an integration $\G$ of $\aff$ as above, we call $I$ the \emph{isotropy data associated to $\G$}. 
\end{definition}
Notice that isotropy data is only defined for $\G$ such that the discrete bundle $\K|_{x=0}$ is locally trivial. This condition is equivalent to requiring that the modular vector field lifts to a complete vector field on $\G$.

Given arbitrary isotropy data $I$, we denote by $\K(I)$ the following bundle of groups over $\aff$:
\[ \mathcal{K}(I) := \left(\left( \bigsqcup_{g \in G^\pm} \aff^\pm \times \{ g \} \right) \sqcup \left( \bigsqcup_{h \in H} \aff \times \{h \} \right)\right)/ \sim, \]
where $\sim$ is the equivalence relation generated by:
\[ (x,y,h_1) \sim (x,y,h_2)\quad \text{ if }\quad \phi^\pm(h_1)=\phi^\pm(h_2),\ (x,y)\in\aff^\pm.   \]
We equip $\K(I)$ with the quotient topology, so that it becomes a bundle of discrete groups over $\aff$. There is an obvious action of $\G\Aff$ on $\K(I)$ and we call the Lie groupoid:
\[ \G(I) := \G\Aff \times_\aff \K(I) \rightrightarrows \aff \] 
the \emph{symplectic groupoid} associated to $I$. The symplectic structure is the pullback under the projection of the symplectic structure in $\G\Aff$. One checks easily that $\K(I)$ (and hence $\G(I)$) will be Hausdorff if and only if $\phi^\pm$ are injective.

The equivalence relation $\sim$ is precisely the equivalence relation given by the intersection of the images of $\sigma_h$ and $\sigma_g$ for $h \in H$ and $g \in G^\pm$. Therefore,
\begin{theorem}\label{thm:affclass}
Let $p: \G \to \G\Aff$ be the fibration from \cref{Gprojection}, with kernel $\K$. If $\G$ is natural and $I$ is the isotropy data associated with $\G$ then $\K(I) \isom \K$ and $\G(I) \isom \G$.
\end{theorem}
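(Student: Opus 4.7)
The strategy is to construct an explicit smooth bundle-of-groups isomorphism $\Phi : \K(I) \to \K$; the conclusion $\G(I) \isom \G$ then follows immediately from the identification $\G \isom \G\Aff \times_\aff \K$ together with the definition $\G(I) := \G\Aff \times_\aff \K(I)$, since the symplectic form on both groupoids is pulled back from $\G\Aff$ via the projection on the first factor.

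I would define $\Phi$ on the constituent pieces of $\K(I)$ by sending fiber elements to the values of the canonical sections from \cref{subisotropydata}: set $\Phi(x,y,g) := \sigma_g(x,y)$ for $g \in G^\pm$ and $(x,y) \in \aff^\pm$, and $\Phi(x,y,h) := \sigma_h(x,y)$ for $h \in H$ and $(x,y) \in \aff$. The substantive claim is then that $\Phi$ descends to the quotient and is a fiberwise isomorphism. For compatibility with $\sim$, note that the relation identifies $(x,y,h)\in \aff^\pm\times H$ with $(x,y,\phi^\pm(h))\in \aff^\pm\times G^\pm$. Both sections $\sigma_h$ and $\sigma_{\phi^\pm(h)}$ over the contractible set $\aff^\pm$ agree at $(\pm 1,0)$ by the very definition of $\phi^\pm$, so by uniqueness of extensions of sections of a locally trivial discrete bundle on a simply connected base, they must agree everywhere on $\aff^\pm$.

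Next, $\Phi$ is a fiberwise group homomorphism. On $\aff^\pm$ the pointwise product $\sigma_{g_1}\sigma_{g_2}$ is itself a smooth section of $\K$ that takes value $g_1 g_2$ at $(\pm 1, 0)$, so by uniqueness it coincides with $\sigma_{g_1 g_2}$; the analogous argument using the modular-flow trivialization of $\K|_{\{x=0\}}$ handles the $H$-piece. Surjectivity onto each fiber follows because the sections $\sigma_g$ and $\sigma_h$ exhaust the discrete trivializations on $\aff^\pm$ and on $\{x=0\}$ respectively, and injectivity is the same uniqueness statement in contrapositive form.

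The main obstacle is verifying that $\Phi$ is globally smooth across the singular axis $\{x=0\}$, where the $G^\pm$-trivializations on $\aff^\pm$ must meet the $H$-trivialization compatibly. The equivalence relation defining $\K(I)$ is precisely what encodes this gluing: it collapses $(x,y,h)\in\aff^\pm\times H$ onto $(x,y,\phi^\pm(h))\in\aff^\pm\times G^\pm$, so in a saturated neighborhood of $\{x=0\}$ the single piece $\aff\times H$ already surjects onto $\K(I)$, and $\Phi$ restricted to this piece is the smooth map $(x,y,h)\mapsto \sigma_h(x,y)$. Thus $\Phi$ is smooth there, hence is a smooth isomorphism of bundles of discrete groups. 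Transporting $\Phi$ through the semi-direct product yields the desired symplectic groupoid isomorphism $\G(I)\isom \G$.
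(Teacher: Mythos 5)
Your construction is the one the paper intends: the paper's own ``proof'' of \cref{thm:affclass} consists of the single remark that $\sim$ is exactly the relation identifying the overlaps of the images of the sections $\sigma_h$ and $\sigma_g$, so your write-up simply makes that verification explicit, and most of it (descent to the quotient, fiberwise homomorphism and bijectivity via uniqueness of sections over a simply connected base) is fine. One step as stated is wrong, however: it is not true that $\aff\times H$ surjects onto $\K(I)$ over a saturated neighborhood of $\{x=0\}$. Over $\aff^\pm$ the $H$-sheets only cover the sub-bundle corresponding to $\im \phi^\pm\subset G^\pm$, and $\phi^\pm$ is typically far from surjective (for a surface, $H$ is trivial while $G^\pm$ is the fundamental group of the open leaf). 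Fortunately the conclusion does not need this claim: $\K(I)$ carries the quotient topology, so a map out of it is smooth if and only if its composition with the quotient map is smooth on each piece of the disjoint union, where $\Phi$ is just $(x,y,\alpha)\mapsto\sigma_\alpha(x,y)$, a smooth section by construction; bijectivity together with the fact that both sides are \'etale over $\aff$ (images of continuous sections of \'etale maps are open) then makes $\Phi$ a diffeomorphism. You should also record one point that both you and the paper pass over: to transport $\Phi$ through the semidirect products you must check that it intertwines the $\G\Aff$-actions, i.e.\ that the sections $\sigma_\alpha$ are invariant under the action of $\G\Aff$ on $\K$; this again follows from uniqueness of sections through a given point, but it is precisely the statement that upgrades the bundle isomorphism $\K(I)\isom\K$ to the groupoid isomorphism $\G(I)\isom\G$.
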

Any point in $\G \isom \G(I)$ can be uniquely represented by a pair $(g,\alpha)$ where $g \in \G\Aff$ and 
\[ \alpha \in 
\left\{
\begin{array}{l}
G^\pm\quad \text{ if } g\in \G\Aff^\pm   \\
\\
H \quad \text{ if } g \in \G\Aff|_{x=0} \\
\end{array}
\right.
\]
In these ``coordinates'' the product is given by $(g,\alpha) \cdot (h,\beta) = (gh,\alpha \beta)$.

\subsection{Maps of isotropy data}\label{submapsofisodata}
In this section, $\G_1 \isom \G(I_1)$ and $\G_2 \isom \G(I_2)$ will be symplectic groupoids integrating $\aff$ with isotropy data $I_1$ 
and $I_2$ respectively where:
\[ I_i := \left( \raisebox{-0.25\height}{\includegraphics{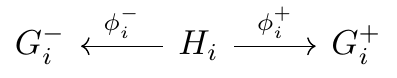}} \right) \mbox{ for }i=1,2. \]
Recall the discussion of pointed bimodules in $\cref{subsection:pointedbimodules}$. We think of $\G_1$ and $\G_2$ as groupoids over the pointed manifold $(\aff, (0,0))$. Therefore, if we say $(P,p_0)$ is a pointed $(\G_2,\G_2)$-bimodule we mean that $p_0 \in P_{(0,0)}$. 

A bimodule $P$ is \emph{orientation preserving} if it relates the positive half-plane to the positive-half plane and \emph{orientation reversing} otherwise. A bisection $\sigma$ of a $Z$-static bimodule is called $Z$-static if $J_2 \circ \sigma (x,y) = (x,y)$ or $J_2 \circ \sigma(x,y) = (-x,y)$.
\begin{proposition}
Suppose $P$ is a $Z$-static $(\G_2,\G_1)$-bimodule. Then $P$ admits a $Z$-static bisection.
\end{proposition}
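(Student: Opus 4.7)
The plan is to reformulate the existence of a $Z$-static bisection as the existence of a section of an auxiliary étale map. Let $\epsilon=+1$ if $P$ is orientation-preserving and $\epsilon=-1$ otherwise, and consider
\[ M_\epsilon := \{\, p \in P : J_2(p) = (\epsilon\, J_1(p)_1, J_1(p)_2)\,\} \subset P. \]
A $Z$-static bisection is exactly a smooth section of $J_1|_{M_\epsilon}\colon M_\epsilon \to \aff$. So the goal reduces to producing such a section. I read ``$Z$-static bimodule'' as meaning that the induced homeomorphism of orbit spaces restricts to the identity on $Z$; under this convention the proposition becomes a meaningful assertion about $M_\epsilon$.

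The first step is to show $M_\epsilon$ is a smooth $2$-dimensional submanifold of $P$ and that $J_1|_{M_\epsilon}$ is a local diffeomorphism. Because $P$ is a principal $(\G_2,\G_1)$-bibundle, the left $\G_2$-action and right $\G_1$-action are free and commute, and at each $p \in P$ one has $\ker dJ_1 \cap \ker dJ_2 = 0$ (these are the tangent spaces to the two commuting free actions). Combined with $\dim P = 4 = \dim\aff + \dim\aff$, this makes $dJ_1 \oplus dJ_2 \colon T_pP \to T_{J_1(p)}\aff \oplus T_{J_2(p)}\aff$ an isomorphism, so $(J_1,J_2)$ is transverse to the graph $\{((x,y),(\epsilon x,y))\}$. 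Hence $M_\epsilon$ is a submanifold of codimension~$2$, and the restriction of $dJ_1$ to $T_pM_\epsilon$ is an isomorphism by a direct calculation, proving that $J_1|_{M_\epsilon}$ is étale.

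The second step is surjectivity of $J_1|_{M_\epsilon}$. For $(x,y) \in \aff^\pm$, pick any $p \in J_1^{-1}(x,y)$; orientation (in)variance of $P$ places $J_2(p)$ in the $\G_2$-orbit of $(\epsilon x, y)$. Acting on the left by an arrow in $\G_2$ from $J_2(p)$ to $(\epsilon x, y)$ produces a point in $M_\epsilon \cap J_1^{-1}(x,y)$. For $(0,y) \in Z$, the $Z$-static hypothesis says the induced map on orbit space is the identity on $Z$, so $M_\epsilon \cap J_1^{-1}(0,y)$ is nonempty by definition.

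The third step, which I expect to be the main obstacle, is to promote the étale surjection $J_1|_{M_\epsilon}$ to one admitting a global section over the contractible base $\aff \isom \R^2$. The cleanest route is to verify the path-lifting property, which combined with simple connectedness of $\aff$ upgrades $J_1|_{M_\epsilon}$ (restricted to the connected component of a chosen basepoint) to a diffeomorphism onto $\aff$. For path lifting I would construct complete lifts of $\partial_y$ and $\partial_x$ to vector fields on $M_\epsilon$. The lift of $\partial_y$ is produced directly from naturality condition~(c): the modular-flow automorphisms $\Phi_1^t, \Phi_2^t$ of $\G_1,\G_2$ induce a compatible flow $\Psi^t$ on $P$ covering $(x,y)\mapsto(x,y+t)$ via both anchors, hence preserving $M_\epsilon$ and giving a complete lift of $\partial_y$. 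The lift of $\partial_x$ is trickier because the orbits of $\G\Aff$ change type across $Z$; here I would use the Darboux--Weinstein semi-local model from the beginning of Section~1, together with local bisections of $\G_1$ tangent to the normal direction of $Z$ and bounded near $Z$ (so as to avoid the $\log x \to -\infty$ degeneration that arises from naive bisections in $\G\Aff^\pm$), to patch together a complete horizontal lift on $P$. Once both lifts are complete, path lifting is immediate and $J_1|_{M_\epsilon}$ restricted to the basepoint component is a diffeomorphism onto $\aff$, yielding the desired $Z$-static bisection.
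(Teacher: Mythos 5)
Your Step 1 contains a fatal error, and it occurs exactly where the content of the proposition lies. You claim that $\ker dJ_1\cap\ker dJ_2=0$ because the two actions are free and commute. Freeness does not give this: $\ker dJ_1$ is tangent to the left $\G_2$-orbit and $\ker dJ_2$ to the right $\G_1$-orbit, and their intersection at $p$ is identified with the Lie algebra of the isotropy group $(\G_2)_{J_2(p)}$. Over the open half-planes this isotropy is discrete and your argument is fine, but over $Z=\{x=0\}$ the symplectic leaves of $\aff$ are points, so the isotropy group of \emph{any} integration of $\aff$ at $(0,y)$ is two-dimensional (it contains a copy of $\Aff$, as one sees already for $\G\Aff$ where $(\G\Aff)_{(0,y)}\isom\Aff$). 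Consequently $M_\epsilon\cap J_1^{-1}(0,y)=P_{(0,y)}$ is a bitorsor for these two-dimensional groups: $M_\epsilon$ has $2$-dimensional fibers over $Z$, is not a $2$-dimensional submanifold, and $J_1|_{M_\epsilon}$ is not \'etale. Steps 2 and 3 (path lifting over a simply connected base) therefore have nothing to act on; the "$\log x\to-\infty$ degeneration" you flag in Step 3 is a symptom of this more basic failure, not a separate patchable issue.

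What is missing is a device that singles out, inside the $2$-dimensional isotropy torsor $P_{(0,y)}$, the discrete set of "identity-like" points. The paper supplies exactly this: it constructs a local symplectomorphism $\Phi:P\to\G\Aff$ with $(\t\times\s)\circ\Phi=J_2\times J_1$, defined over $x_1\neq 0$ by $\Phi(p)=(\log(x_2/x_1),(y_2-y_1)/x_1,x_1,y_1)$ and extended across $Z$ by the same limiting argument (via local bisections) used for the fibration $p:\G\to\G\Aff$ in the affine-plane section. Then $\Phi^{-1}(\u(Z))$ is a principal bundle over $Z$ for the \emph{discrete} group $H_2$, so a single point $p_0$ over the origin extends uniquely to a section over $Z\isom\R$, and since $\Phi$ is a local diffeomorphism the identity section of $\G\Aff$ pulls back to extend this uniquely off $Z$. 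Your set $M_\epsilon$ is the preimage under $\Phi$ of the full isotropy over $Z$ rather than of the unit section; replacing it by $\Phi^{-1}(\u(\aff))$ is essentially the repair, but constructing $\Phi$ (in particular its continuous extension across $Z$) is the real work, and your proposal does not contain it.
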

\begin{proof}
Suppose $P$ orientation preserving. We first make the following claim:
There is a local symplectomorphism $\Phi:P \to \G\Aff$ which makes the following diagram commute:
\[
\xymatrix{
{P}\ar[rr]^{\Phi}\ar[rd]_{J_2 \times J_1}&{}&{\G\Aff}\ar[dl]^{\t \times \s} \\
{}&{\aff \times \aff \,.}&{}
}
\]
We can think of the claim as an analogue of \cref{Gprojection} for bimodules and the proof of this claim is similar. If $P$ is orientation preserving then for $p \in P$ let $J_1(p) = (x_2,y_2)$ and $J_2(p) = (x_1,y_1)$. If $P$ is orientation reversing then let $J_1(p) = (-x_2,y_2)$ and $J_2(p)=(x_1,y_1)$. Then we define $\Phi$ for $x_1\ne 0$ by:
\[ \Phi(p):= \left(\log \left(\frac{x_2}{x_1}\right),\frac{y_2-y_1}{x_1},x_1,y_1 \right),\]
so the claim follows.

Let $p_0 \in P_{(0,0)}$ be such that $\Phi(p_0) = (0,0,0,0) \in \G\Aff|_Z$. Notice that $Q:= \Phi\inv(\u(Z))$ is a principal $H_2$ bundle over $Z$. Therefore, $p_0$ extends to a unique section of $Q$. On the other hand, since $\Phi$ is a local diffeomorphism, the identity section of $\G\Aff$ gives a unique local extension of any $q \in Q$ to a static bisection. Therefore $p_0$ extends to a unique static bisection in some neighborhood of $Z$ which we can extend uniquely to all of $\aff$.

When $P$ is orientation reversing, we replace $J_1$ by composing it with $(x,y) \mapsto (-x,y)$, and then the same argument applies.
\end{proof}
A \emph{pointed} bimodule $(P,p_0)$ is called $Z$-\emph{static} if $P$ is a $Z$-static bimodule and $p_0$ extends to a $Z$-static bisection. Let $(P,p_0)$ be such a orientation preserving pointed $(\G_2, \G_1)$-bimodule and $\sigma$ the associated bisection. Then the points $\sigma(\pm 1,0)$ and $\sigma(0,0) \in P$ determine group homomorphism $\psi^\pm: G^\pm_1 \to G^\pm_2$ and $\psi: H_1 \to H_2$ such that:
\begin{equation}\label{diagram:opisotropymap}
\xymatrix{
G_1^- \ar[d]_{\psi^-} & H_1 \ar[l]_{\phi_1^-}\ar[r]^{\phi_1^+}\ar[d]^{\psi} & G_1^+ \ar[d]^{\psi^+} \\
G_2^- & H_2 \ar[l]_{\phi_2^-} \ar[r]^{\phi_2^+} & G_2^+ \, ,
}
\end{equation}
commutes. If $P$ is orientation reversing then we get a similar diagram:
\begin{equation}\label{diagram:orisotropymap}
\xymatrix{
G_1^- \ar[d]_{\psi^-} & H_1 \ar[l]_{\phi_1^-}\ar[r]^{\phi_1^+}\ar[d]^{\psi} & G_1^+ \ar[d]^{\psi^+} \\
G_2^+ & H_2 \ar[l]_{\phi_2^+} \ar[r]^{\phi_2^-} & G_2^- \, ,
}
\end{equation}
This motivates the following definition:
\begin{definition}\label{isodatamap}
An \emph{orientation preserving isomorphism} $\Psi:I_1 \to I_2$ is a triple of group isomorphisms $\Psi =(\psi,\psi^\pm)$ such that (\ref{diagram:opisotropymap}) commutes. An \emph{orientation reversing isomorphism} $\Psi: I_1 \to I_2$ is a triple of group isomorphisms such that (\ref{diagram:orisotropymap}) commutes.
\end{definition}
This gives a category where composition corresponds to composing the vertical arrows. For a map of isotropy data $\Psi: I_1 \to I_2$ and $\alpha \in H,G^-,G^+$ we may sometimes write $\Psi(\alpha)$ to mean $\psi(\alpha),\psi^-(\alpha),\psi^+(\alpha)$, respectively.
\begin{example}[Inner automorphisms]
Given $\alpha \in H$ the \emph{inner automorphism} associated to $\alpha$ is the orientation preserving isomorphism $\C_\alpha:I \to I$ where $\psi:= C_\alpha:H \to H$ is conjugation by $\alpha$ and $\psi^\pm := C_{\phi^\pm(\alpha)}:G^\pm \to G^\pm$ is conjugation by $\phi^\pm(\alpha)$.
\end{example}
\begin{example}[Pointed Bimodules]
In the above discussion, we constructed the \emph{isomorphism of isotropy data} associated to $(P,p_0)$, a $Z$-static pointed $(\G_2,\G_1)$-bimodule.
\end{example}
The next lemma says that the second example is generic:
\begin{lemma}\label{lemma:bimvsisomap} There are 1-1 correspondences between:
\begin{enumerate}[(i)]
\item orientation preserving isomorphisms of isotropy data $\Psi: I_1 \to I_2$ and orientation preserving $Z$-static pointed $(\G_2,\G_2)$-bimodules.
\item orientation reversing isomorphisms of isotropy data $\Psi: I_1 \to I_2$ and orientation reversing $Z$-static pointed $(\G_2,\G_2)$-bimodules.
\end{enumerate}
For this lemma, we are considering pointed bimodules up to \emph{strong isomorphism}.
\end{lemma}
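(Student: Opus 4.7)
The plan is to mimic the construction $I \mapsto \G(I)$ from \cref{thm:affclass} to produce, from an isomorphism $\Psi : I_1 \to I_2$, a $Z$-static pointed $(\G_2,\G_1)$-bimodule $(P(\Psi), p_0)$, and then show that every $Z$-static pointed bimodule is strongly isomorphic to $P(\Psi)$ for its associated $\Psi$. The construction of the map from bimodules to isomorphisms has already been carried out in the paragraphs preceding the lemma, so it remains to build the inverse and verify compatibility in both directions.

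First I would construct $P(\Psi)$. In the orientation preserving case, by analogy with $\K(I)$, I define a bundle of discrete $(\K_2,\K_1)$-bi\-torsors over $\aff$ whose fiber at $(x,y) \in \aff^\pm$ is $G_2^\pm$ with left $G_2^\pm$-action by multiplication and right $G_1^\pm$-action given by $p \cdot g := p\,\psi^\pm(g)$, and whose fiber at $(0,y) \in Z$ is $H_2$ with the analogous structure via $\psi$; the commutativity of diagram (\ref{diagram:opisotropymap}) is precisely what makes the quotient bundle well-defined across $Z$, and is the analogue of the compatibility used in the definition of $\K(I)$. Then $P(\Psi) := \G\Aff \times_\aff \K(\Psi)$ carries the commuting $\G_2$-left and $\G_1$-right actions, the 2-form pulled back from $\G\Aff$ is two-sidedly multiplicative (since it already is on $\G_1,\G_2$), and the constant section $\sigma_0(x,y)$ given by the class of $(\u(x,y),e)$ is a $Z$-static bisection. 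Setting $p_0 := \sigma_0(0,0)$, the isotropy computation is trivial: conjugation by $\sigma_0(\pm 1,0)$ and $\sigma_0(0,0)$ produces $\psi^\pm$ and $\psi$ respectively, so $(P(\Psi),p_0)$ has associated isomorphism exactly $\Psi$.

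For uniqueness, let $(P,p_0)$ be any orientation preserving $Z$-static pointed $(\G_2,\G_1)$-bimodule with associated isomorphism $\Psi$, and let $\sigma_P$ be the unique $Z$-static bisection through $p_0$ guaranteed by the proposition preceding the lemma. Every point of $P$ can be written uniquely as $g_2 \cdot \sigma_P(x,y)$ for $g_2 \in \G_2$ with $\s(g_2)=(x,y)$, and symmetrically for the right action. I define $\Phi : P(\Psi) \to P$ by sending the class of $(g_2\cdot \u(x,y), k) \in \G\Aff \times_\aff \K(\Psi)$ to $g_2 \cdot \sigma_P(x,y) \cdot \widetilde{k}$, where $\widetilde{k}$ is the element of $\G_1$ represented by $k$ under the identification from \cref{thm:affclass}. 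Well-definedness of $\Phi$ amounts exactly to the identity $\sigma_P(x,y)\cdot \widetilde{k} = \widetilde{\Psi(k)}\cdot \sigma_P(x,y)$, which is how $\Psi$ was defined from $\sigma_P$; equivariance for both actions, compatibility of anchors, $\Phi(p_0)=p_0$, and preservation of the symplectic form (both forms pull back from $\G\Aff$) are then all automatic, so $\Phi$ is a strong isomorphism.

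The orientation reversing case follows from the same argument after precomposing the projection to $\G\Aff$ with the involution $(x,y)\mapsto(-x,y)$, which swaps the roles of $\aff^+$ and $\aff^-$ and turns diagram (\ref{diagram:opisotropymap}) into (\ref{diagram:orisotropymap}). The main obstacle is verifying smoothness and Hausdorffness of the gluing of $\K(\Psi)$ along $Z$, and dually, the smoothness of the decomposition $p = g_2 \cdot \sigma_P(x,y) \cdot \widetilde{k}$ for $P$ uniformly across the singular locus; both reduce to the same local model already established in the proof of \cref{Gprojection} and its bimodule analogue invoked in the existence of $Z$-static bisections, so no new analytic input should be required.
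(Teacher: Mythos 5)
Your construction of $P(\Psi)$ as $\G\Aff\times_\aff\K(\Psi)$ is exactly the paper's bimodule $P_F$ associated to the groupoid isomorphism $F(g,\alpha)=(g,\Psi(\alpha))$ written out in the semi-direct product coordinates, so the proposal is correct and follows essentially the same route as the paper. Your explicit verification that any $Z$-static pointed bimodule with associated isomorphism $\Psi$ is strongly isomorphic to $P(\Psi)$ (via the trivialization coming from the $Z$-static bisection) supplies a detail the paper leaves implicit, but it is the same argument in substance.
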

\begin{proof}
We prove the orientation preserving case. The orientation reversing case is similar, so can be left to the reader. Given a orientation preserving $Z$-static pointed $(\G_2,\G_1)$-bimodule we have already provided the construction of an isomorphism of isotropy data above. Suppose $\Psi$ is an isomorphism of isotropy data. Then we can define a symplectic groupoid isomorphism $F: \G_1 \to \G_2$. In our `coordinates' from before, $F$ takes the form:
\[ F(g,\alpha) = (g, \Psi(\alpha)). \]
The bimodule associated to this map $P_\Psi$ is symplectomorphic to $\G_2$ and comes with a canonical point $p_\Psi = \u(0,0)$. Clearly $(P_\Psi, p_\Psi)$ is an orientation preserving $Z$-static pointed bimodule. The definition of the actions on $P_\Psi$ make it clear that the isomorphism of isotropy data associated to $(P_\Psi,p_\Psi)$ is $\Psi$.
\end{proof}
From now on, we will use the notation $P_\Psi$ to denote the bimodule associated to a map of isotropy data. The map $\Psi \mapsto P_\Psi$ is functorial, i.e.
\[ P_{\Psi_1 \circ \Psi_2} \isom P_{\Psi_1} \otimes P_{\Psi_2} . \] 
When we pass to ordinary isomorphism classes of bimodules, we can think of the map $\Psi \mapsto P_\Psi$ as corresponding to the forgetful map $(P,p_0) \mapsto P$.
\subsection{The Picard group of $\G(I)$}\label{subbimoveraff}
We need one more lemma before calculating the Picard group.
\begin{lemma}\label{staticbimoduleclass}
For any isomorphism $\Psi: I \to I$, the bimodule $P_\Psi$ is trivial if and only if $\Psi$ is an inner automorphism. 
\end{lemma}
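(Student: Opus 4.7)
The plan is to leverage the 1--1 correspondence from \cref{lemma:bimvsisomap} between $Z$-static pointed bimodules (up to strong isomorphism) and isomorphisms of isotropy data. First I would dispose of the orientation-reversing case: if $\Psi$ is orientation reversing, then \cref{lemma:bimvsisomap}(ii) yields that $P_\Psi$ is an orientation-reversing bimodule, so it induces a nontrivial swap of positive and negative orbits on $\aff/\G$; since $P_{\Id}=\G(I)$ induces the identity on orbit spaces, $P_\Psi\not\isom P_{\Id}$. Meanwhile, inner automorphisms are orientation preserving by definition, so both sides of the iff fail in this case. From here on I would assume $\Psi$ is orientation preserving.

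For the forward direction, given $\Psi=C_\alpha$, my strategy is to exhibit $P_\Psi$ as the identity bimodule $\G(I)$ pointed at a \emph{different} base point. Specifically, consider the element $(e,\alpha)\in \Aff\times H$ viewed as an element of the isotropy of $\G(I)$ at $(0,0)$ via the identification of \cref{thm:affclass}. The section $\tau_\alpha(x,y):=(\u(x,y),\sigma_\alpha(x,y))$ is an orientation-preserving $Z$-static bisection passing through $(e,\alpha)$, so $(P_{\Id},(e,\alpha))$ is a $Z$-static pointed bimodule, and a short direct computation using $(e,\alpha)\cdot\beta=(e,\alpha\beta)=C_\alpha(\beta)\cdot(e,\alpha)$ (and its analogue with $\phi^\pm(\alpha)$ for $\beta\in G^\pm$) shows that its associated isomorphism of isotropy data is precisely $C_\alpha$. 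By \cref{lemma:bimvsisomap} the pointed bimodules $(P_\Psi,p_\Psi)$ and $(P_{\Id},(e,\alpha))$ are then strongly isomorphic, so $P_\Psi\isom P_{\Id}$ as unpointed bimodules.

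For the converse, suppose $\phi\colon P_\Psi\to P_{\Id}$ is a bimodule isomorphism. Then $(P_\Psi,p_\Psi)\to (P_{\Id},\phi(p_\Psi))$ is automatically a strong isomorphism of pointed bimodules, and $\phi$ transports the canonical $Z$-static bisection of $P_\Psi$ to a $Z$-static bisection of $P_{\Id}$ through $\phi(p_\Psi)$. The crux is then to classify these: an orientation-preserving $Z$-static bisection of $\G(I)$ takes values in the isotropy at each point, which is the discrete group $G^\pm$ on each half-plane $\aff^\pm$ and $\Aff\times H$ on $Z$; connectedness of $\aff^\pm$ forces the bisection to be constant on each half-plane, and the continuity condition as $x\to 0^\pm$---read off from the quotient topology defining $\K(I)$ in \cref{subisotropydata} together with the collapse of the $\G\Aff$-isotropy away from $Z$---forces the $\Aff$-component over $Z$ to vanish and $\tau|_Z\equiv(e,h)$ for a unique $h\in H$ with $\phi^\pm(h)$ matching the half-plane values. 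Thus $\phi(p_\Psi)=(e,h)$ for some $h\in H$, and by the forward-direction computation $(P_{\Id},(e,h))$ has isotropy data isomorphism $C_h$; the 1--1 correspondence of \cref{lemma:bimvsisomap} then forces $\Psi=C_h$, so $\Psi$ is inner. The main obstacle is precisely this bisection classification, where one must rule out bisections with nontrivial $\Aff$-component and verify that the labels on either side of $Z$ come from a common element of $H$ via $\phi^\pm$.
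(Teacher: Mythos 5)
Your argument is correct, but it takes a genuinely different route from the paper. The paper's proof is much shorter because it outsources the key step to Bursztyn--Weinstein \cite{Burz2}: $P_F$ is trivial if and only if $F$ is conjugation by a (lagrangian, here static) bisection; it then reads off $\alpha=\sigma(0,0)\in H$ and concludes $\Psi=\C_\alpha$. You instead stay entirely inside the paper's own machinery: you realize $P_{\C_\alpha}$ as the identity bimodule re-pointed at $(e,\alpha)$ and invoke the 1--1 correspondence of \cref{lemma:bimvsisomap} in both directions, which requires you to classify the orientation-preserving $Z$-static bisections of $\G(I)$ explicitly (constant $G^\pm$-values on each half-plane by discreteness, vanishing $\G\Aff$-component and a single $h\in H$ over $Z$ by continuity in the quotient topology on $\K(I)$, with $\phi^\pm(h)$ matching the half-plane values). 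What your approach buys is self-containedness and added care at exactly the point the paper glosses over --- namely why the bisection's value at the origin lies in $H$ rather than in the full isotropy $\Aff\times H$ of $\G(I)$ over $Z$, and why the two half-plane labels are both images of that single $h$ under $\phi^\pm$; what the paper's approach buys is brevity and the reassurance that the symplectic (lagrangian) condition on the trivializing bisection is accounted for by the cited result, a point you address only implicitly (harmlessly, since all your static bisections $(\u,\sigma_h)$ are automatically lagrangian for the pulled-back form). Your preliminary disposal of the orientation-reversing case via the induced map on orbit spaces is also fine and is simply left tacit in the paper.
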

\begin{proof}
It is proved in \cite{Burz2} that for any automorphism $F: \G \to \G$ then $P_F$ is a trivial bimodule if and only if $F$ is an inner automorphism associated to a static bisection. Now suppose $P_\Psi$ is trivial. Recall the $F$ from the construction of $P_\Psi$ in \cref{lemma:bimvsisomap}. Since $P_\Psi$ is trivial, we must have that $F$ is given by conjugation by a static bisection $\sigma$. Let $\alpha = \sigma(0,0) \in H$. Then $\psi: H \to H$ must be $C_\alpha$ and $\psi^\pm = C_{\phi^\pm(\alpha)}$ and  so $\Psi$ is an inner automorphism. On the other hand, if $\Psi = \C_\alpha$ let $\sigma$ be the unique static bisection extending $\alpha$. Then clearly $F$ is the inner automorphism induced by $\sigma$.
\end{proof}
In particular, the inner automorphisms are in the kernel of the map $\Psi \mapsto P_\Psi$ and form a normal subgroup. Therefore, it makes sense to define $\OutAut(I)$ to be the automorphisms of $I$ modulo inner automorphisms. We now prove the main theorem of this section:
\begin{theorem}
Let $\G = \G(I)$ be a natural symplectic groupoid integrating $\aff$. Then:
\[ \Pic(\G) \isom \OutAut(I) \times \R. \]
\end{theorem}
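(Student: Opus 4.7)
The plan is to construct a group isomorphism $\OutAut(I) \times \R \to \Pic(\G)$ by sending $(\Psi, t) \mapsto [P_\Psi \otimes P_{\Phi^t}]$, where $\Phi^t: \G \to \G$ is the lift of the modular flow on $\aff$ to a symplectic groupoid automorphism of $\G$, furnished by property (c) of naturality (\cref{defnatural}).

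For well-definedness and the homomorphism property, \cref{staticbimoduleclass} says that inner automorphisms yield trivial bimodules, so the map descends through $\OutAut(I)$. To justify the direct product structure (rather than a semidirect product), I would work in the coordinates $\G \isom \G\Aff \times_\aff \K(I)$ of \cref{thm:affclass}: denoting by $F_\Psi$ the groupoid automorphism realizing $P_\Psi$ in the proof of \cref{lemma:bimvsisomap}, it acts as $(g,\alpha)\mapsto (g,\Psi(\alpha))$, while $\Phi^t$ acts only on the $\G\Aff$-factor by translating the $y$-coordinate. These two automorphisms commute, hence $P_\Psi \otimes P_{\Phi^t} \isom P_{\Phi^t} \otimes P_\Psi$, and functoriality of $\Psi \mapsto P_\Psi$ together with additivity of the modular flow finishes the check.

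Injectivity is immediate: if $P_{F_\Psi \circ \Phi^t}$ is trivial, then $F_\Psi \circ \Phi^t$ is an inner automorphism of $\G$, so it covers the identity on $\aff$. Since $F_\Psi$ covers the identity while $\Phi^t$ covers $y$-translation by $t$, we conclude $t=0$, and then \cref{staticbimoduleclass} forces $\Psi$ to be inner.

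Surjectivity will be the main obstacle. Given $P \in \Pic(\G)$, the strategy is: (i) show that the induced homeomorphism of $\aff/\G$ acts on $Z$ by an affine map $y \mapsto \pm y + t_0$; (ii) compose $P$ with $P_{\Phi^{-t_0}}$, and, if orientation-reversing, with the bimodule associated to the canonical reflection $(x,y)\mapsto(-x,y)$, to obtain a $Z$-static bimodule; (iii) choose a basepoint over $(0,0)$ and invoke \cref{lemma:bimvsisomap} to realize the resulting $Z$-static bimodule as $P_\Psi$ for some $\Psi \in \Aut(I)$. The delicate step is (i): I would establish it by lifting the modular $\R$-action to $P$ through both moment maps $J_1$ and $J_2$ using property (c) applied on the left and on the right, and then observing that the induced map $J_2 \circ J_1\inv$ on $Z$ must intertwine the translation $\R$-action on source and target, hence must be affine.
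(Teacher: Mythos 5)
Your plan follows the paper's proof essentially verbatim: the paper identifies $Z\Pic(\G)\isom\OutAut(I)$ via \cref{staticbimoduleclass}, shows the modular subgroup $\Mod\isom\R$ commutes with it, and uses preservation of the modular class (so that $P$ acts on $\{x=0\}$ by translation) to conclude that these two subgroups generate $\Pic(\G)$ and intersect trivially. The only deviation is your composition with a ``canonical reflection'' in the orientation-reversing case, which is unnecessary---and need not exist as a $(\G,\G)$-bimodule when $G^+\not\isom G^-$---since $Z$-static bimodules and $\OutAut(I)$ already encompass the orientation-reversing case via \cref{lemma:bimvsisomap}(ii).
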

\begin{proof}
We start by observing that from \cref{staticbimoduleclass} it follows that the subgroup $Z\Pic(\G) \subset \Pic(\G)$ of $Z$-static bimodules is isomorphic to $\OutAut(I)$. By naturality, we have a 1-parameter group of symplectic automorphisms of $\G$ integrating the modular vector field:
\[ \Modd^t: \G \to \G, \quad (a,b,x,y,\alpha)\longmapsto (a,b,x,y+t,\alpha).\]
We let $\mathcal{M}^t \in \Pic(M)$ be the element represented by the bimodule associated with $\Modd^t$, and this defines 1-parameter subgroup $\Mod \subset \Pic$ isomorphic to $\R$.

Every symplectic bimodule $P$ preserves the class of the modular vector field. It follows that an arbitrary $P \in \Pic(\G)$ acts on $\{x=0\}$ by translations. Therefore the subgroups $\Mod$ and $Z\Pic(\G)$ generate $\Pic(\G)$. Since $Z$-static bimodules commute with $\Mod^t$, elements in $Z \Pic(\G)$ commute with elements in $\Mod$. Hence:
\[  Z\Pic(\G) \times \Mod \to \Pic(\G), \quad (P,Q) \longmapsto PQ,\]
is a well defined surjective group homomorphism. We claim that the kernel is trivial: if $P \in Z\Pic(\G)$ and $Q \in \Mod$, then $PQ = 1$ if and only if $P = Q\inv$. But $Q\inv$ is $Z$-static if and only if $Q = 1$.
\end{proof}

\section{The Picard group of the affine cylinder}\label{sectionaffinecylinder}
Let $\caff$ be the affine cylinder. Points in $\caff$ are equivalence classes $[(x,y)]$ of pairs $(x,y) \in \R^2$, where:
\[  (x,y) \sim (x,y+n),\ \forall n \in \Z. \]
The projection $C:\aff \to \caff$, $(x,y) \mapsto [(x,y)]$, is a local Poisson diffeomorphism. Throughout this section $\G \rightrightarrows \caff$ is a natural integration of the affine cylinder. Our goal is to show that we can realize $\G$ as a quotient of some $\G(I)$. The projection $\G(I) \to \G$ will depend on the \emph{holonomy data} of $\G$. We will proceed as follows:
\begin{itemize}
    \item in \cref{subsection:holdata}, we extract discrete data from any natural integration of $\caff$ and give a definition of \emph{holonomy data}, denoted $(I,\Hol)$;
    \item in \cref{subsection:mapsofhol}, we classify bimodules over $\caff$ via maps of holonomy data;
    \item in \cref{subsection:caffpic}, we compute the Picard group a natural integration of $\caff$.
\end{itemize}
\subsection{Holonomy data}\label{subsection:holdata}
Similarly to the affine plane, natural integrations of $\G$ of the affine cylinder can be characterized with discrete data. We will begin by defining the isotropy data of $\G$.
\begin{lemma}
Suppose $\G$ is a natural integration of $\caff$. Then there exists isotropy data $I$ and a surjective groupoid homomorphism $\Proj: \G(I) \to \G$:
\[ \raisebox{-0.5\height}{\includegraphics{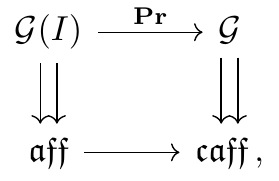}} \]
such that $\Proj$ is an isomorphism at the level of isotropy groups and $\aff \to \caff$ is the universal cover.
\end{lemma}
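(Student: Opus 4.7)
The plan is to construct $\G(I)$ by pulling back $\G$ along the universal covering map $C:\aff\to\caff$ and then invoking the classification of natural integrations of the affine plane from \cref{thm:affclass}. This parallels the strategy of the previous section, but transferred to the covering space.

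Concretely, form the pullback Lie groupoid
\[ \widetilde \G := \{(p,g,q) \in \aff \times \G \times \aff : C(p) = \t(g),\ C(q) = \s(g)\} \rightrightarrows \aff, \]
with structure maps induced from those of $\G$ and natural projection $\Proj:\widetilde \G\to\G$, $(p,g,q)\mapsto g$. Since $C$ is a local Poisson diffeomorphism, $\Proj$ is a local symplectomorphism, and pulling back the symplectic form on $\G$ turns $\widetilde \G$ into a symplectic groupoid integrating $\aff$. The orbits of $\widetilde \G$ are precisely the $C$-preimages of the orbits of $\G$: since a natural $\G$ has orbits $\{x>0\}$, $\{x=0\}$, and $\{x<0\}$ in $\caff$, their preimages in $\aff$ are the two open half-planes and the $y$-axis, all connected. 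Moreover, the modular vector field $\partial_y$ on $\aff$ is $\Z$-invariant and projects to the modular vector field on $\caff$, so the 1-parameter family of symplectic groupoid automorphisms of $\G$ guaranteed by naturality lifts uniquely to a 1-parameter family on $\widetilde \G$ covering the flow of $\partial_y$. Hence $\widetilde \G$ is a natural integration of $\aff$ and, by \cref{thm:affclass}, is isomorphic to $\G(I)$ for some isotropy data $I$.

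It remains to verify the two asserted properties of $\Proj: \G(I) \isom \widetilde \G \to \G$. For surjectivity, given $g \in \G$, pick any $p \in C\inv(\t(g))$ and any $q \in C\inv(\s(g))$; then $(p,g,q) \in \widetilde \G$ maps onto $g$. On isotropy at a point $p\in\aff$, the group $\widetilde \G_p$ consists of triples $(p,g,p)$ with $g\in \G_{C(p)}$, and $\Proj$ sends this bijectively onto $\G_{C(p)}$.

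I expect the main technical point to be checking that the pullback is genuinely a \emph{natural} integration of $\aff$ rather than just any integration, and in particular the lifting of the modular flow automorphisms. This hinges on the $\Z$-equivariance of the family $\Phi^t$ on $\G$, which is inherited from the fact that $\Phi^t$ covers the modular flow on $\caff$ and that the deck transformations act by Poisson diffeomorphisms commuting with this flow. Once this compatibility is established, \cref{thm:affclass} furnishes the isotropy data $I$ with essentially no further work.
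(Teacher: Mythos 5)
Your overall strategy --- pull back $\G$ along the universal cover $C:\aff\to\caff$ and then invoke \cref{thm:affclass} --- is the same as the paper's, but there is a genuine gap in the middle step: the claim that the orbits of the pullback groupoid $\widetilde\G=C^*\G$ are the two open half-planes and the $y$-axis, all connected. The singular circle $\{x=0\}\subset\caff$ is \emph{not} a single orbit of $\G$: the points of the singular locus are zero-dimensional symplectic leaves, and naturality (condition (a), connected orbits) forces each point $[(0,y)]$ to be its own orbit. Consequently the orbit of $(0,y_0)$ in $C^*\G$ is the disconnected set $\{(0,y_0+n):n\in\Z\}$, so $C^*\G$ violates condition (a) and is not a natural integration of $\aff$; in particular it is not isomorphic to any $\G(I)$, since every $\G(I)$ has single-point orbits along the axis and a locally trivial bundle of isotropy groups there. \cref{thm:affclass} therefore cannot be applied to $\widetilde\G$ as you have defined it.

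The fix, which is what the paper does, is to replace $C^*\G$ by its maximal open subgroupoid $\overline{\G}$ with connected orbits, i.e.\ to delete the arrows $((0,y_2),g,(0,y_1))$ with $y_2\neq y_1$ (this set is closed because $y_2-y_1$ ranges over $\Z\setminus\{0\}$ and cannot accumulate at $0$, so the complement is an open subgroupoid). The projection to the middle factor remains surjective, since any deleted arrow projects to some $g\in\G_{[(0,y_1)]}$ which also appears as the retained arrow $((0,y_1),g,(0,y_1))$, and it is still an isomorphism on isotropy groups. This deletion is not a technicality: the discarded arrows are exactly the data that reappears later as the holonomy map $\hol:H\to H$ in the holonomy data of $\G$, so keeping them would erase the distinction this section is built to capture. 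The remainder of your argument --- surjectivity, the identification of isotropy, and the lift of the modular flow to the pullback --- is correct and carries over to $\overline{\G}$ unchanged.
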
 
\begin{proof}
First consider the pullback groupoid:
\[  C^* \G :=  \{ ((x_2,y_2), \, g , \, (x_1,y_1)) \in \aff \times \G \times \aff : \t(g) = C(x_2,y_1), \,  \s(g) = C(x_1,y_1) \}.\]
Let $\overline{\G}$ be the maximal open subgroupoid of $C^* \G$ with connected orbits. Let $\Proj:\overline{\G} \to \G$ be projection to the middle component. The homomorphism $\Proj$ is surjective and is an isomorphism when restricted to any isotropy group of $\overline{G}$. Furthermore, since $\G$ is natural, it is clear that $\overline{\G}$ is natural and by \cref{thm:affclass} we can identify $\overline{\G}$ with $\G(I)$ for some isotropy data $I$.
\end{proof}
If $\Proj: \G(I) \to \G$ is as in the above lemma, we say $I$ is the \emph{isotropy data} of $\G$ which we denote as before $(G^- \from H \to G^+)$. 

Our goal is to compute the fibers of $\Proj$. To do this, we will define some convenient notation.
\begin{itemize}
\item The image $\Proj(\K(I)) = \K \subset \G$ is a discrete bundle of Lie groups. Clearly $H \isom \K_{(0,0)}$ so let $\hol: H \to H$ be the holonomy of $\K|_{\{ 0 \} \times \S^1}$. In other words:
\[ \Proj(g - n, \hol^n(\alpha)) = \Proj(g ,\alpha ) \, . \]
Where if $g = (a,b,x,y)$ then $g+n:= (a,b,x,y+n)$ for any integer $n$. 
\item Let $\eta$ be the unique bisection $\eta: (\aff - Z) \to \G\Aff$ such that: 
\[ \t \circ \eta(x,y) = (x,y+1) \, . \]
Observe that $\eta$ cannot be defined on the critical locus $Z$.
\item Since $\Proj(\eta(\pm 1,0)) \in \K_{(\pm 1, 0)} \isom G^\pm$, let $\gamma^\pm \in G^\pm$ be such that:
\[ \Proj(\eta(x,y)) = (\u(x,y),\gamma^\pm) \quad x \neq 0 \, . \]
\item Let $\zeta: (\aff - Z) \to \G(I)$ be the bisection $\zeta$ such that 
\[ \zeta(x,y) = (\eta(x,y),(\gamma^\pm)\inv) \quad \mbox{for }x \in \aff^\pm. \]
\end{itemize}
\begin{proposition}\label{prop:equivrelation}
Suppose $\Proj: \G(I) \to \G$ is as in the above lemma. Let $\zeta^\pm$ and $\hol: H \to H$ be as we just defined them. Then for any $(g,\alpha),(g',\alpha') \in \G(I)$ we have that $\Proj(g,\alpha) = \Proj(g,\alpha)$ if and only if one of the following holds:
\begin{enumerate}[(i)]
\item $g,g' \in \G\Aff|_{\aff^\pm}$ and there exists integers $n$ and $m$ such that:
\[ \zeta^n( g , \alpha) \zeta^m = (g', \alpha') \, ;\]
\item $g_1,g_2 \in \G\Aff|_Z$ and there exists an integer $n$ such that:
\[ (g_1 - n, \hol^n(\alpha_1)) = (g_2, \alpha_2) \, . \]
\end{enumerate}
\end{proposition}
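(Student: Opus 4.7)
\emph{Plan.} My plan is to handle the two directions separately, after observing that (i) and (ii) are exhaustive: since $\Proj$ covers the universal cover $C:\aff\to\caff$, which preserves the partition $\aff^+\sqcup\aff^-\sqcup Z$, any two $\Proj$-equivalent arrows must have sources and targets in the same component, and since $\G\Aff$-orbits preserve the sign of $x$, exactly one of (i), (ii) applies.

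\emph{Easy direction.} Case (ii) is just the defining property of $\hol$, applied iteratively: the formula $\Proj(g-n,\hol^n(\alpha))=\Proj(g,\alpha)$ is precisely what characterises $\hol$. For case (i), the key identity is $\Proj\circ\zeta=\u\circ C$ on $\aff-Z$: using the decomposition $\zeta(x,y)=(\eta(x,y),e)\cdot(\u(x,y),(\gamma^\pm)^{-1})$ in the semidirect product $\G(I)=\G\Aff\times_\aff\K(I)$, the choice $\Proj(\eta(x,y))=\Proj(\u(x,y),\gamma^\pm)$ of $\gamma^\pm$ ensures the two factors cancel under $\Proj$, leaving $\u([x,y])$. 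Induction on $n$ and multiplicativity of $\Proj$ then give $\Proj(\zeta^n(g,\alpha)\zeta^m)=\Proj(g,\alpha)$.

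\emph{Hard direction.} Conversely, assume $\Proj(g,\alpha)=\Proj(g',\alpha')$. In case (i), equality of projections forces the sources (resp.\ targets) of $g,g'$ to differ by a unique integer shift $m$ (resp.\ $n$) in the $y$-coordinate. By the easy direction, $\zeta^n(g,\alpha)\zeta^m$ then shares source, target, and $\Proj$-image with $(g',\alpha')$; their ratio lies in a single isotropy group and projects to the unit, so the injectivity of $\Proj$ on isotropy groups (built into the preceding lemma) forces equality. In case (ii), the triviality of the $\Aff$-action on $Z$, coming from $(a,b)\cdot(0,y)=(0,y)$, makes every arrow of $\G\Aff|_Z$ isotropy, so $\s(g)=\t(g)=(0,y_1)$ and similarly for $g'$; setting $n=y_1-y_1'$, the easy direction shows $(g-n,\hol^n(\alpha))$ has the same source and $\Proj$-image as $(g',\alpha')$, and the same isotropy argument concludes. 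The main conceptual hurdle is recognising that $\zeta$ was designed precisely to make $\Proj\circ\zeta=\u\circ C$, which converts the translation-by-$\zeta$ into a $\Proj$-trivial operation and allows the source/target alignment strategy to work; once this is in place, the rest is a straightforward manipulation together with the injectivity of $\Proj$ on isotropy.
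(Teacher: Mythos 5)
Your proposal is correct and follows essentially the same route as the paper: the forward implication rests on the fact that $\Proj\circ\zeta$ is the unit section (respectively, on the defining property of $\hol$), and the converse aligns sources and targets by integer shifts and then invokes the injectivity of $\Proj$ on isotropy groups. The only additions are the explicit cancellation argument showing $\Proj\circ\zeta=\u\circ C$ and the remark that cases (i) and (ii) are exhaustive, both of which the paper leaves implicit.
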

\begin{proof}
Suppose $g_1,g_2 \in \G\Aff|_{\aff^\pm}$ and there exists $n$ and $m$ as above. Observe that $\Proj(\zeta)$ is the identity section whenever $\zeta$ is defined and so $\Proj(g,\alpha) \Proj(\zeta^n (g, \alpha) \zeta^m) = \Proj(g',\alpha')$. 

Now suppose $g_1,g_2 \in \G\Aff|_{\aff^\pm}$ such that $\Proj(g,\alpha) = \Proj(g',\alpha')$. Then the $y$ coordinates of the source and target of $g$ and $g'$ differ by integers. Therefore there exist $n$ and $m$ such that $\eta^{-n} g \eta^{-m} = g'$. Hence $\Proj(\zeta^n (g,\alpha) \zeta^m) = \Proj(g',\alpha')$. Since $\zeta^n \cdot (g,\alpha) \cdot \zeta^m$ has the same source and target as $(g',\alpha')$ and are mapped by $\Proj$ to the same point, they must be equal (recall $\Proj$ is a bijection at the level of isotropy groups).

Now suppose $g_1,g_2 \in \G\Aff|_Z$ and there exists an $n$ satisfying (ii). By the definition of the holonomy map $\Proj(g_1 - n, \hol^n(\alpha_1))= \Proj(g_1,\alpha_1)$ and so $\Proj(g_1,\alpha_1) = \Proj(g_2,\alpha_2)$. On the other hand, suppose $\Proj(g_1,\alpha_1) = \Proj(g_2,\alpha_2)$. Then there exists a unique $n$ such that $g_2 = g_1 - n$. By the definition of holonomy, we conclude that $\Proj(g_2, \hol^n(\alpha_1) ) = (g_2, \alpha_2)$. Since $\Proj$ is an isomorphism at the level of isotropy groups, we conclude that $\hol^n(\alpha_1) = \alpha_2$.
\end{proof}

Hence, the topology of $\G$ is uniquely determined by $I$, $\hol$ and $\gamma^\pm$. We call the tuple $\Hol = (I,\hol,\gamma^\pm)$ the \emph{holonomy data} of $\G$. The holonomy map $\hol$ and $\gamma^\pm$ also satisfy a compatibility condition. Let $\hol^\pm: G^\pm \to \G^\pm$ be conjugation by $\gamma^\pm$.
\begin{lemma}
The triple $\mathbf{H}=(\hol,\hol^\pm): I \to I$ is an automorphism of isotropy data.
\end{lemma}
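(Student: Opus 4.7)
The plan is to verify the three conditions making $\mathbf{H}=(\hol,\hol^\pm):I\to I$ a morphism of isotropy data in the sense of \cref{isodatamap}: that $\hol$ is a group automorphism of $H$, that each $\hol^\pm$ is a group automorphism of $G^\pm$, and that the square in \cref{diagram:opisotropymap} (with both copies of isotropy data set to $I$ and the vertical maps given by $\mathbf{H}$) commutes, i.e.\ $\phi^\pm\circ\hol = \hol^\pm\circ\phi^\pm$.

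The first two conditions are quick. Since $\hol^\pm=C_{\gamma^\pm}$ is conjugation in $G^\pm$, it is automatically a group automorphism. For $\hol$, use \cref{prop:equivrelation}(ii): applying $\Proj$ to the product $(\u(0,1),\alpha)(\u(0,1),\beta)=(\u(0,1),\alpha\beta)$ of isotropy elements in $\G(I)$ and then invoking (ii) on each factor yields $\hol(\alpha\beta)=\hol(\alpha)\hol(\beta)$; bijectivity is immediate because $\hol^n$ appears in (ii) with arbitrary $n\in\Z$.

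The main work is to prove $\phi^\pm\circ\hol=\hol^\pm\circ\phi^\pm$. The strategy is to realize both sides as the fiberwise action of one and the same bundle automorphism of $\K$. Since $\G$ is a natural integration of $\caff$, the modular vector field lifts to the flow $\Phi^t_{\G(I)}(g,\alpha)=(g+t,\alpha)$ on $\G(I)$ (translating by $t$ in the $y$-direction), which by $\Proj$-equivariance descends to a flow $\Phi^t_\G$ on $\G$. At $t=1$ the base map on $\caff$ is the identity, so $\Phi^1_\G$ restricts to a bundle automorphism of $\K$ preserving each fiber. A direct calculation from \cref{prop:equivrelation}(ii) shows this fiberwise action is $\hol$ on $H=\K_{(0,[0])}$; an analogous conjugation-by-$\zeta$ calculation using \cref{prop:equivrelation}(i), exploiting that $\Proj(\zeta)$ is the identity while $\Proj(\eta(x,0))$ contributes $\gamma^\pm$, shows the action is $\hol^\pm=C_{\gamma^\pm}$ on $G^\pm=\K_{(\pm 1,[0])}$.

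To bridge the two fibers, fix $h\in H$ and consider the continuous section $\sigma_h:\aff\to\K(I)$. The composite $\Phi^1_\G\circ\Proj\circ\sigma_h:\aff\to\K$ is continuous, and since $\Proj$ is a fiberwise isomorphism on isotropy groups it lifts uniquely to a continuous map $\til F:\aff\to\K(I)$. Unwinding the formulas, $\til F(0,y)=\hol(h)$ on $Z$ while $\til F(x,y)=\hol^\pm(\phi^\pm(h))$ on $\aff^\pm$. However, any continuous section of $\K(I)$ is determined by its value $h'\in H$ on $Z$, and its value on $\aff^\pm$ is then forced to be $\phi^\pm(h')$ by the matching condition built into the $\sim$-relation defining $\K(I)$. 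Applied to $\til F$ with $h'=\hol(h)$, this forces $\hol^\pm(\phi^\pm(h))=\phi^\pm(\hol(h))$, completing the proof. The main technical subtlety lies in justifying the continuous lift $\til F$, which reduces to the fiberwise bijectivity of $\Proj$ together with compatibility of local trivializations of $\K$ and $\K(I)$.
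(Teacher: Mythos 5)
Your proof is correct and follows essentially the same route as the paper: you identify $\hol^\pm=C_{\gamma^\pm}$ as the holonomy of the discrete bundle $\K$ over the circles $\{\pm 1\}\times\S^1$ (the paper does this with the explicit conjugation path $g(t)=\Proj(\eta^t)\,k\,\Proj(\eta^t)\inv$, which is your time-$1$ modular flow in disguise), and then deduce $\phi^\pm\circ\hol=\hol^\pm\circ\phi^\pm$ from the continuity of sections of $\K(I)\to\K$, exactly the paper's closing remark. Your extra verification that $\hol$ is a group automorphism and your more explicit lifting argument are fine but not a different method.
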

\begin{proof}
We must show that:
\begin{equation}\label{diagram:hol}
\xymatrix{
G^- \ar[d]_{\hol^-} & H \ar[l] \ar[r] \ar[d]^{\hol} & G^+ \ar[d]^{\hol^+} \\
G^-                    & H \ar[l] \ar[r]                  & G^+ \, ,
}
\end{equation}
commutes.
We first claim that $\hol^\pm$ are the holonomy of $K$ around the circles $\{ \pm 1 \} \times \S^1$.
Let $\eta^t: \aff - Z \to \G\Aff$ be the unique 1-parameter family of bisections such that:
\[ \t(\eta^t(x,y)) = (x,y+t) \, . \]
Let $g(t): [0,1]$ be the path in $\G$ defined as follows:
\[ g(t) = \Proj( \eta^t(\pm 1,0) \cdot \Proj(\u(\pm 1,0), \alpha) \cdot (\eta^{-t}(\pm 1,t) ). \]
Then $g(t)$ is a path in $\K$ covering the circle $\{ \pm 1 \} \times \S^1 \subset \caff$. Furthermore:
\[ g(0) = \Proj(\u( \pm 1, 0) , \alpha ), \mbox{ and }  g(1) = \Proj(\u( \pm 1, (\gamma^\pm) \alpha (\gamma^\pm)\inv. \]
Hence the holonomy of $\K|_{\{ \pm 1 \} \times \S^1}$ is conjugation by $\gamma^\pm$ and the claim follows.

The holonomy of $\K$ can also be understood in terms of the projection $\K(I) \to \K$. In particular, \cref{diagram:hol} commutes as a consequence of the continuity of $\K(I) \to I$.
\end{proof}
This motivates our definition of holonomy data:
\begin{definition}\label{holofGdef}
Let $I$ be isotropy data $G^- \from H \to G^+$, $\hol: H \to H$ be an isomorphism, and $\gamma^\pm \in G^\pm$. Then $\Hol = (I,\hol,\gamma^\pm))$ is called \emph{holonomy data} if $\hol^\pm = \C_{\gamma^\pm}$ and $\mathbf{H}:=(\hol,\hol^\pm): I \to I$ is an automorphism of isotropy data.

A \emph{strong isomorphism} $(I_1,\hol_1,\gamma^\pm_1) \to (I_2,\hol_2,\gamma^\pm_2)$ is an isomorphism $(\psi,\psi^\pm): I \to I$ such that $\Psi$ commutes $\mathbf{H}$ and $\psi^\pm(\gamma_1^\pm) = \gamma_2^\pm$.
\end{definition}
We say strong isomorphism above since we will need a weaker notion of isomorphism in our treatment of bimodules over $\caff$. By \cref{prop:equivrelation} the fibers of $\G(I) \to \G$ can be computed entirely in terms of the holonomy data, hence:
\begin{theorem}
Suppose $\G_1$ and $\G_2$ are natural integrations of $\caff$. Then $\G_1$ and $\G_2$ are isomorphic if and only if their holonomy data is strongly isomorphic.
\end{theorem}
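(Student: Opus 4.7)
The plan is to prove both implications by exploiting the surjective presentation $\Proj : \G(I)\to \G$ from the first lemma of this section together with the explicit description of its fibers given in \cref{prop:equivrelation}.

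For the reverse direction, suppose $\Psi = (\psi,\psi^\pm) : I_1 \to I_2$ is a strong isomorphism of holonomy data from $(I_1,\hol_1,\gamma_1^\pm)$ to $(I_2,\hol_2,\gamma_2^\pm)$. By \cref{lemma:bimvsisomap} we obtain a symplectic groupoid isomorphism $\tilde F : \G(I_1)\to \G(I_2)$, given in the coordinates introduced after \cref{thm:affclass} by $\tilde F(g,\alpha) = (g,\Psi(\alpha))$. The task is then to show $\tilde F$ descends through $\Proj_1$ and $\Proj_2$, i.e.\ that it carries equivalence classes of \cref{prop:equivrelation} in $\G(I_1)$ to equivalence classes in $\G(I_2)$. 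For case (i) of that proposition, the canonical bisections $\zeta_1,\zeta_2$ are built from $\gamma_1^\pm,\gamma_2^\pm$, so $\tilde F\circ\zeta_1 = \zeta_2$ follows immediately from $\psi^\pm(\gamma_1^\pm) = \gamma_2^\pm$, and hence the action by $\zeta_i^n$ intertwines. For case (ii), the requirement is $\Psi \circ \hol_1 = \hol_2 \circ \Psi$, which is precisely the statement that $\Psi$ commutes with $\mathbf{H}$. Thus $\tilde F$ descends to an isomorphism $F: \G_1 \to \G_2$, and $F$ is automatically a symplectic groupoid isomorphism because $\Proj_1,\Proj_2$ are local symplectomorphisms.

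For the forward direction, suppose $F : \G_1 \to \G_2$ is an isomorphism of symplectic groupoids covering a Poisson diffeomorphism $f: \caff \to \caff$. Since $f$ must preserve $Z$ and, by property (b) of natural integrations, must preserve the splitting into positive and negative orbits, it preserves the orientation of $\caff$. Using property (c), we may pre-compose $F$ with a modular flow automorphism $\Phi^t$ in each component of $\caff$ so that $f$ fixes the base points $(0,0)$ and the circles $\{\pm 1\}\times \S^1$. With this normalization, $f$ can be assumed to be the identity on $\caff$, so that $F$ lifts through the coverings $\Proj_i$ to an isomorphism $\tilde F : \G(I_1)\to \G(I_2)$. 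By \cref{thm:affclass}, $\tilde F$ is determined by an isomorphism of isotropy data $\Psi : I_1 \to I_2$. The identity $F\circ \Proj_1 = \Proj_2 \circ \tilde F$ forces $\tilde F(\zeta_1) = \zeta_2$ (by uniqueness of the bisection $\eta$ and hence of $\zeta$), yielding $\psi^\pm(\gamma_1^\pm) = \gamma_2^\pm$, while naturality of the holonomy $\hol$ of $\K|_Z$ under groupoid isomorphisms yields $\Psi \circ \hol_1 = \hol_2 \circ \Psi$. So $\Psi$ is a strong isomorphism of holonomy data.

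The main obstacle I expect is the base-point reduction in the forward direction: one must genuinely use naturality (specifically the existence of the lifted modular flow in property (c)) to align the base points, because the notion of strong isomorphism is, as emphasized in \cref{holofGdef}, rigid with respect to the chosen representatives $\gamma^\pm$ and does not quotient by conjugation. Once the base points are aligned, lifting $F$ to the cover $\G(I_1)\to \G(I_2)$ is then routine, since $C:\aff \to \caff$ is the universal cover and $\Proj_i$ is an isomorphism on isotropy groups.
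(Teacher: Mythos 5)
Your overall route is the same as the paper's: the paper disposes of this theorem in one line by observing that \cref{prop:equivrelation} expresses the fibers of $\Proj:\G(I)\to\G$ purely in terms of $(I,\hol,\gamma^\pm)$, so that $\G$ is recovered as the quotient of $\G(I)$ by an equivalence relation read off from the holonomy data; your two directions (descending $\tilde F(g,\alpha)=(g,\Psi(\alpha))$ through the quotients, and lifting $F$ through the pullback to $\aff$) are a reasonable fleshing-out of exactly that observation, and the verifications you cite for cases (i) and (ii) of \cref{prop:equivrelation} are the right ones.

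There is, however, one step that fails as stated. In the forward direction you assert that the base Poisson diffeomorphism $f$ ``preserves the splitting into positive and negative orbits'' and hence the orientation of $\caff$. This is false: the map $(x,\theta)\mapsto(-x,\theta)$ is a Poisson automorphism of $\caff^\rho$ that exchanges the two open half-cylinders, so an isomorphism $F:\G_1\to\G_2$ of natural integrations may perfectly well send the positive orbit of $\G_1$ to the negative orbit of $\G_2$. Property (b) of \cref{defnatural} only provides the splitting; it does not make it invariant. The statement survives because \cref{holofGdef} defines a strong isomorphism as \emph{any} isomorphism of isotropy data in the sense of \cref{isodatamap}, which includes the orientation-reversing ones, so the fix is simply to run your argument a second time with the orientation-reversing form of $\tilde F$ (composing with the flip, with $\psi^\pm:G_1^\pm\to G_2^\mp$ and $\psi^\pm(\gamma_1^\pm)=\gamma_2^\mp$) rather than to claim that case cannot occur. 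Your closing remark about the base-point normalization is well taken, and I would add that after moving the base point with the modular flow one should also note that the resulting identifications of $G_1^\pm$ with fibers of $\K_2$ are independent of the choice of path only up to conjugation by $\gamma_2^\pm$, which fixes $\gamma_2^\pm$ and commutes with $\hol^\pm=C_{\gamma_2^\pm}$, so the strong-isomorphism conditions are unaffected.
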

\subsection{Bimodules over $\caff$}\label{subsection:mapsofhol}
Suppose $\G_1$ and $\G_2$ are natural integrations with holonomy data $(I_1,\Hol_1)$ and $(I_2,\Hol_2)$ respectively. We will denote this data as follows:
\[ 
I_i := \left( \raisebox{-0.25\height}{\includegraphics{"isodatasubi".pdf}} \right); \]
\[ \Hol_i := (\hol_i,\gamma^\pm_i) \, . \] 
Throughout, $\Proj_i : \G(I_i) \to \G_i$ denotes the projection from \cref{subsection:holdata}. As in the affine plane case, we say that a $(\G_2,\G_1)$-bimodule is $Z$-static if the induced map of orbit spaces fixes the critical line. By fixing a covering $\aff \to \caff$ we can think of $\caff$ together with the image of the origin in $\aff$ as a pointed manifold. Our aim is characterize pointed $(\G_1, \G_2)$-bimodules in terms of holonomy data. 
\begin{lemma}\label{lemma:phi}
Suppose $P$ is a $Z$-static pointed $(\G_2,\G_1)$-bimodule. Then there exists a unique pointed $(\G(I_2),\G(I_1))$-bimodule $(P_\Psi,p_0)$ and a surjective submersion $\Phi: P_\Psi \to P$ satisfying:
\begin{equation}\label{eqn:caffbim} 
\Phi( g \cdot p) = \Proj_2(g) \cdot \Phi(p) \, , \qquad \Phi( p \cdot g) = \Phi(p) \cdot \Proj_1(g) \, . 
\end{equation}
\end{lemma}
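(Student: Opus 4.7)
The plan is to pull back $P$ along the universal cover $C:\aff\to\caff$, reducing the statement to the classification of $Z$-static pointed bimodules over the affine plane obtained in \cref{lemma:bimvsisomap}. First, I would form the fibered product
\[ \til P := \{(\til x_2, p, \til x_1) \in \aff \times P \times \aff : C(\til x_i) = J_i(p)\}, \]
equipped with projections $\til J_i(\til x_2, p, \til x_1) = \til x_i$. The natural left and right actions of the pullback groupoids $C^*\G_i$ (defined exactly as in the construction of $\G(I_i)$ in \cref{subsection:holdata}) make $\til P$ into a $(C^*\G_2, C^*\G_1)$-bimodule, with symplectic form the pullback from $P$.

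Next, let $\bar P \subset \til P$ be the maximal open sub-bimodule whose source and target actions come from the connected-orbit restrictions $\G(I_1) \subset C^*\G_1$ and $\G(I_2) \subset C^*\G_2$. Concretely, a triple $(\til x_2, p, \til x_1)$ lies in $\bar P$ whenever the $\G(I_i)$-orbit of $\til x_i$ is contained in a single $\Proj_i$-preimage of the $\G_i$-orbit of the corresponding endpoint of $p$. Since $P$ is $Z$-static and the covering $C$ restricts to the identity on the critical line's universal cover (via the chosen lift), $\bar P$ is $Z$-static as a bimodule over $\aff$; moreover the distinguished basepoint $p_0 \in P_{(0,0)}$ lifts uniquely to $\bar p_0 := ((0,0), p_0, (0,0)) \in \bar P$, and this point extends to a $Z$-static bisection because the corresponding bisection of $P$ does.

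Now applying \cref{lemma:bimvsisomap} to $(\bar P, \bar p_0)$ produces a unique isomorphism of isotropy data $\Psi: I_1 \to I_2$ together with a canonical strong isomorphism $(\bar P, \bar p_0) \isom (P_\Psi, p_0)$. The map $\Phi : P_\Psi \to P$ is then defined as the composition of this identification with the projection of $\til P$ onto its middle component. The equivariance identities (\ref{eqn:caffbim}) follow directly from the construction of $\til P$ together with the fact that each $\Proj_i$ is a groupoid homomorphism, and $\Phi$ is a surjective submersion because $C$ is and $\bar P$ covers $P$ fiberwise.

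For uniqueness, suppose $(Q, q_0)$ and $\Phi': Q \to P$ is another such pair. The equivariance (\ref{eqn:caffbim}) together with the basepoint condition allow one to define a canonical map $Q \to \til P$ sending $q \mapsto (\til J_2(q), \Phi'(q), \til J_1(q))$, which by the connected-orbit property of $\G(I_i)$ lands inside $\bar P$; one checks this is an isomorphism of pointed bimodules, hence $(Q, q_0) \isom (P_\Psi, p_0)$ and $\Phi' = \Phi$ under this identification. The main technical point will be verifying that $\bar P$ genuinely satisfies the $Z$-static pointed hypotheses of \cref{lemma:bimvsisomap}; this requires using naturality of $\G_1$ and $\G_2$ to lift the modular flow and show that $\bar P$ inherits a static bisection through $\bar p_0$ from the one on $P$.
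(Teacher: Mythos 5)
Your strategy is exactly the paper's: pull $P$ back along the universal cover $C:\aff\to\caff$ to a bimodule over the pullback groupoids, cut down to an open sub-bimodule $\overline{P}$ over which the connected-orbit groupoids $\G(I_i)$ act principally, and then invoke \cref{lemma:bimvsisomap} to identify $(\overline{P},\overline{p}_0)$ with some $(P_\Psi,p_\Psi)$, with $\Phi$ the projection to the middle factor. The one place where your write-up goes wrong is the ``concrete'' description of $\overline{P}$. Over the critical line the orbits of $\G(I_i)$ are single points $(0,y)$, and the $C$-preimage of the circle orbit of $\G_i$ is the entire line $\{0\}\times\R$, so your criterion (``the $\G(I_i)$-orbit of $\til x_i$ is contained in a single $\Proj_i$-preimage of the $\G_i$-orbit'') is satisfied by \emph{every} triple and excludes nothing; over the open half-planes it is likewise vacuous. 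Taken literally it yields $\overline{P}=\til P$, on which the left $\G(I_2)$-action is not principal: a fiber $\til J_1^{-1}(0,y_1)$ contains points with all targets $(0,y_2)$, $y_2$ ranging over a $\Z$-coset, while each $(0,y_2)$ is a fixed point of $\G(I_2)$, so the action cannot be transitive on that fiber. The correct cut, which is what the paper writes down explicitly, is to delete the triples $((0,y_2),p,(0,y_1))$ with $y_2\neq y_1$; this is also what makes $\overline{P}$ genuinely $Z$-static as a bimodule over $\aff$ (relating $(0,y)$ to $(0,y)$) and singles out $((0,0),p_0,(0,0))$ as the distinguished basepoint (it is a canonical choice among the $\Z^2$-many lifts of $p_0$, not a unique one). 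With that correction the rest of your argument, including the equivariance identities and the uniqueness discussion (which the paper leaves implicit), goes through.
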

\begin{proof}
Consider the pullback groupoids $C^* \G_1$ and $C^* \G_2$. Let $C^* P$ be the corresponding $(C^* \G_2 , C^* \G_1)$-bimodule:
\[ C^* P := \{ ((x_2,y_2), \, p , \, (x_1,y_1) ) \in \aff \times P \times \aff: C(x_2,y_2) = \t(p) \, , C(x_1,y_1) = \s(p) \} \, .\]
Let $\overline{P}$ be the complement of $\{ (0,y_2) , p , (0,y_1) ) : y_2 \neq y_1 \}$ in $C^* P$. Then $\overline{P}$ is an open submanifold of $C^* P$. Let $\overline{p}:= ((0,0),p_0, (0,0))$ so that $(\overline{P},\overline{p})$ is a pointed manifold. The left and right actions of $C^* \G_2$ and $C^* \G_2$ make $(\overline{P},\overline{p}))$ into a $Z$-static pointed $(\overline{\G_2}, \overline{\G_1})$ bimodule. Hence we can identify $(\overline{P},\overline{p})$ with an isomorphism $\Psi: I_1 \to I_2$ and the associated pointed bimodule $(P_\Psi,p_\Psi)$. The construction of $\overline{P}$ makes it clear that (\ref{eqn:caffbim}) holds.
\end{proof}
Let $\sigma$ be the canonical static bisection of $(P_\Psi, p_\Psi)$. Since the left action is principal, we know that there exists an $h \in H_2$ such that $h \cdot \Phi(\sigma(0,0))= \Phi(\sigma(0,1))$. Let $\delta$ be the unique static bisecion of $\G(I_2)$ extending $h$. The bisection $\delta$ satisfies:
\begin{equation}\label{eqn:hprop}
\Phi(\delta(x,y) \cdot \sigma(x,y)) = \Phi(\sigma(x,y+1)). 
\end{equation}
We can think of $\delta$ as measuring the failure $\sigma$ to descend to a bisection of $P$. The topology of $P$ is determined by the pair $(\Psi,\delta(0,0)=h)$ since: 
\[ \Phi((g,\alpha) \cdot \sigma(x,y)) = \Phi((g',\alpha') \cdot \sigma(x,y+n)) \, \Leftrightarrow \, \]
\begin{equation}\label{eqn:bimprojcriteria}
\Proj_2(g,\alpha) = \Proj_2(g',\alpha') \cdot \left ( \prod_{i=0}^{n-1} \Proj_2(\delta(x,y+i)) \right).
\end{equation}
The pairs $(\Psi,h)$ arising in this manner are not arbitrary. The next proposition lays out the appropriate compatibility condition.
\begin{proposition}\label{prop:propertiesofmaps}
Let $(P,p_0)$ be a $Z$-static pointed $(\G_2,\G_1)$ bimodule and suppose $\Psi: I_1 \to I_2$ and $h \in H$ are as defined above. Then the following holds:
\begin{enumerate}[(i)]
    \item $\Hol_2 \circ \Psi = \C_h \circ \Psi \circ \Hol_1$;
    \item $\phi^\pm(h) = (\gamma^\pm_2) \Psi(\gamma^\pm_1)\inv$.
\end{enumerate}
\end{proposition}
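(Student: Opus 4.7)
The plan is to reduce both identities to computations in the upstairs model $P_\Psi\isom\G(I_2)$, where $\sigma$ corresponds to the unit bisection, and then push them down to $P$ using the equivariance of $\Phi$ and the identity $\Phi(\delta\cdot\sigma)=\Phi(\sigma\text{ shifted by }1)$ from \cref{eqn:hprop}. All of the book-keeping reduces to multiplication in the semi-direct product $\G\Aff\ltimes\K(I_2)$ together with the conversion between the two natural identifications $\iota_i^{(y)}\colon H_i\to(\G_i)_{[(0,0)]}$, $\alpha\mapsto\Proj_i(\u(0,y),\alpha)$. The rule $\Proj_i(g-n,\hol_i^n(\alpha))=\Proj_i(g,\alpha)$ from \cref{prop:equivrelation} forces $\iota_i^{(1)}=\iota_i^{(0)}\circ\hol_i$, and analogously at $(\pm1,y)$ with $\hol_i$ replaced by $\hol_i^\pm=\C_{\gamma_i^\pm}$.

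For (i), I compute the isotropy isomorphism $(\G_1)_{[(0,0)]}\to(\G_2)_{[(0,0)]}$ attached to the pointed bimodule in two ways. Using $p_0=\Phi(\sigma(0,0))$, the definition of $\Psi$ shows it reads as $\Psi$ in $\iota^{(0)}$-coordinates; using the shifted base point $hp_0=\Phi(\sigma(0,1))$, the general principle $\psi_{hp_0}=\C_h\circ\psi_{p_0}$ for pointed bimodules (which is immediate from $hp_0\cdot g=h\psi_{p_0}(g)p_0=(h\psi_{p_0}(g)h\inv)hp_0$) shows the same map reads as $\C_h\circ\Psi$ in $\iota^{(0)}$-coordinates. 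On the other hand, $\sigma(0,1)\in P_\Psi$ is still the unit of the iso-data bimodule lifted to $(0,1)$, so the same isomorphism equals $\Psi$ when expressed in $\iota^{(1)}$-coordinates. Converting via $\iota^{(1)}=\iota^{(0)}\circ\hol$ yields the relation $\hol_2\inv\circ\C_h\circ\Psi\circ\hol_1=\Psi$, which rearranges to (i).

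For (ii), I apply the same template to the open strata using the bisections $\zeta_i^\pm$. Two preliminaries are needed. First, a direct computation in $\G\Aff\ltimes\K(I_i)$, using the decomposition $\zeta_i^\pm(\pm1,0)=(\u(\pm1,1),(\gamma_i^\pm)\inv)\cdot(\eta(\pm1,0),e)$ and the defining relation $\Proj_i(\eta)=\Proj_i(\u,\gamma_i^\pm)$, shows $\Proj_i(\zeta_i^\pm(\pm1,0))=\u_{\G_i}([\pm1,0])$; the factor $(\gamma_i^\pm)\inv$ built into $\zeta_i^\pm$ is precisely what cancels $\gamma_i^\pm$ after translating back to $\iota^{(0)}$-coordinates through $\hol_i^\pm=\C_{\gamma_i^\pm}$. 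Second, a direct multiplication in the semi-direct product shows
\[ \zeta_2^\pm(\pm1,0)\cdot\sigma(\pm1,0)\cdot F(\zeta_1^\pm(\pm1,0))\inv=\beta\cdot\sigma(\pm1,1)\quad\text{in }P_\Psi, \]
with $\beta=(\gamma_2^\pm)\inv\Psi(\gamma_1^\pm)\in G_2^\pm$ at $(\pm1,1)$, where $F\colon\G(I_1)\to\G(I_2)$ is the symplectic groupoid isomorphism associated to $\Psi$. Applying $\Phi$, the left-hand side collapses by the first preliminary to $\Phi(\sigma(\pm1,0))=\iota_2^{(1)}(\beta)\cdot\Phi(\sigma(\pm1,1))$. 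Meanwhile \cref{eqn:hprop} at $(\pm1,0)$, combined with $\delta(\pm1,0)=\phi_2^\pm(h)$, gives $\Phi(\sigma(\pm1,1))=\iota_2^{(0)}(\phi_2^\pm(h))\cdot\Phi(\sigma(\pm1,0))$. Combining and converting via $\iota_2^{(1)}=\iota_2^{(0)}\circ\C_{\gamma_2^\pm}$, the inner conjugations telescope to yield $\phi_2^\pm(h)=\gamma_2^\pm\Psi(\gamma_1^\pm)\inv$, which is (ii).

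The main obstacle is the book-keeping. There are two levels of groupoids ($\G(I_i)$ and $\G_i$), two points $(0,0)$ and $(0,1)$ of $\aff$ representing the same point in $\caff$, and a semi-direct product multiplication with a non-trivial $\theta$-action. The crucial convention to pin down is the direction of the identifications $\iota^{(1)}=\iota^{(0)}\circ\hol$ (rather than $\iota^{(0)}\circ\hol\inv$), which is forced by the defining formula of $\hol$ in \cref{prop:equivrelation}; the opposite choice would produce structurally similar but incorrect relations. Once this is fixed, both parts follow from the same template: compute upstairs in the semi-direct product, project via $\Phi$, change identifications.
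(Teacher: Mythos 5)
Your argument is correct and follows essentially the same route as the paper: both parts reduce to the identity $\Phi(\delta\cdot\sigma(x,y))=\Phi(\sigma(x,y+1))$, the equivariance of $\Phi$, the defining relation $\Proj(g-n,\hol^n(\alpha))=\Proj(g,\alpha)$, and the fact that $\Proj_i(\zeta_i)$ is the unit section. The only difference is presentational: you phrase (i) through the isotropy-isomorphism formalism $\psi_{hp_0}=\C_h\circ\psi_{p_0}$ of the pointed-bimodule section rather than as a direct two-sided evaluation of $\Phi((\u(x,y+1),\Psi(\alpha))\cdot\sigma(x,y+1))$, and you make the base-point identifications $\iota^{(1)}=\iota^{(0)}\circ\hol$ explicit where the paper leaves them implicit.
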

\begin{proof}
We will prove the case where $P$ is orientation preserving and leave the orientation reversing case to the reader. We first show that \emph{($i$)} holds. Notice that:
\begin{align*}
    \Phi(\,(\u(x,y+1),\Psi(\alpha)) \cdot \sigma(x,y+1) \,) &= \Phi(\sigma(x,y+1)) \cdot \Proj_1(\u(x,y+1),\alpha); \\
    &= \Phi(\, \delta(x,y) \cdot \sigma(x,y) \,) \cdot \Proj_1(\u(0,0),\Hol_1(\alpha)); \\
    &= \Phi( \,\delta(x,y) \cdot ( \u(x,y),\Psi \circ \Hol(\alpha) ) \cdot \sigma(x,y) \,) .
\end{align*}
On the other hand:
\begin{align*}
    \Phi( \, (\u(x,y+1),\Psi(\alpha)) \cdot \sigma(x,y+1) \, ) &= \Proj_2(\u(x,y+1),\Psi(\alpha)) \cdot \Phi(\sigma(x,y+1)); \\
    &= \Proj_2(\u(0,0),\Hol\circ \Psi(\alpha)) \cdot \delta(x,y) \Phi(\sigma(x,y)); \\
    &= \Phi( \, (\u(x,y), \Hol \circ \Psi(\alpha)) \cdot \delta(x,y) \cdot \sigma(x,y) \,).
\end{align*}
Combining these two equalities at the values $(x,y) = (\pm 1,0),(0,0)$ yields:
\[ \Hol \circ \Psi(\alpha) = \C_h \circ \Psi \circ \Hol(\alpha). \]
We now show \emph{($ii$)} holds. Recall the bisection $\eta: \aff -Z \to \G\Aff$ from \cref{subsection:holdata}. Let $\zeta_i(x,y) = (\eta(\pm 1,0),\gamma^\pm_i) \in \G(I_i)$ for $i=1,2$. Then $\Proj_i(\zeta_i)$ is the identity at every point and therefore:
\begin{align*}
\Phi(\sigma(\pm 1,1)) &= \Proj_2(\zeta_2)\inv \cdot \Phi(\sigma( \pm 1,1)) \cdot \Proj_1(\zeta_1); \\
&= \Phi_2(\zeta_2\inv \cdot \Psi(\zeta_1)) \cdot \Phi(\sigma(\pm 1,0)).
\end{align*}
Therefore $\phi^\pm(h) = (\gamma_2^\pm)\Psi(\gamma_1^\pm)\inv$.
\end{proof}
This motivates our definition of maps of holonomy:
\begin{definition}\label{defn:mapofholonomy}
Suppose $h \in H$ and $\Psi: I_1 \to I_2$  is an isomorphism of isotropy data. We say that $(\Psi, h ): (I_1, \Hol_1) \to (I_2, \Hol_2)$ is an \emph{isomorphism of holonomy data} if ($i$) and ($ii$) from \cref{prop:propertiesofmaps} hold.
\end{definition}
This produces a category in which the objects are holonomy data and isomorphisms are composed via the following rule:
\[ (\Psi_1,h_1) \circ (\Psi_2,h_2) := (\Psi_1 \circ \Psi_2, h_1 \Psi_1(h_2)). \]
\begin{example}[Inner Automorphisms]\label{example:holinnerauto}
Suppose $\Psi = \C_\alpha: I \to I$ is an inner automorphism and $h$ satisfies:
\begin{equation}\label{eqn:innerprop}
\hol(\alpha) = h \alpha.
\end{equation}
Then the isomorphism $(\Psi, h ): (I,\Hol) \to (I,\Hol)$ is called an \emph{inner automorphism}.
\end{example}
\begin{example}[Pointed Bimodules]\label{ex:holpointedbim}
Given any $Z$-static pointed $(\G_2, \G_1)$-bimodule $(P,p_0)$ then \cref{prop:propertiesofmaps} says that the $\Psi$ and $h$ we constructed is an isomorphism of isotropy data. In such a case we call $(\Psi, h)$ the holonomy isomorphism of $(P,p_0)$.
\end{example}
The reader is encouraged to check that composition of bimodules corresponds to composition of isomorphisms of holonomy.
Our main end with this definition is to show that \cref{ex:holpointedbim} is the generic case. The next proposition says that the holonomy isomorphism of a $Z$-static pointed bimodule classifies $(P,p_0)$ strongly.

If we combine (\ref{eqn:bimprojcriteria}) with \cref{prop:propertiesofmaps} and \cref{prop:equivrelation} then: 
\begin{proposition}\label{prop:bimodulequotientversions}
Suppose $\Phi: P_\Psi \to P$ is as in \cref{lemma:phi}. Let $\hol_h: H \to H$ be the bijection $\hol_h(\alpha) = \hol(\alpha) h$. Then $\Phi((g,\alpha) \cdot \sigma(x,y)) = \Phi((g',\alpha') \cdot \sigma(x',y'))$ if and only if one of the following holds: 
\begin{enumerate}[(i)]
\item $(x,y),(x',y') \in Z$ and there exists an integer $n$ such that:
\[ (g - n , \, \hol_h^n(\alpha)) = (g', \, \alpha') \, ; \]
\item $(x,y),(x',y') \in \aff^\pm$ and there exists integers $n$ and $m$ such that: 
\[ \zeta_2^{-n} (g, \alpha ) \cdot \Psi(\zeta_1^{-m})  = (g',\alpha') .\]
\end{enumerate}
\end{proposition}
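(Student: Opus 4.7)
The strategy is to chain equation~(\ref{eqn:bimprojcriteria}), which converts the $\Phi$-equality into an equation in $\G_2$, with \cref{prop:equivrelation}, which characterizes the fibers of $\Proj_2\colon \G(I_2)\to \G_2$. First I would observe that $\Phi$ is compatible with the covering $C\colon\aff\to\caff$ on the units, so the hypothesis $\Phi((g,\alpha)\cdot\sigma(x,y))=\Phi((g',\alpha')\cdot\sigma(x',y'))$ immediately forces $(x',y')=(x,y+n_0)$ for some integer $n_0$; the dichotomy $x=0$ versus $x\in\aff^\pm$ matches the two cases of the proposition. Applying (\ref{eqn:bimprojcriteria}) then reduces the hypothesis to the single equation
\[ \Proj_2(g,\alpha)=\Proj_2(g',\alpha')\cdot\prod_{i=0}^{n_0-1}\Proj_2(\delta(x,y+i))\qquad\text{in }\G_2. \]

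For Case (i), each $\delta(0,y+i)=(\u(0,y+i),h)$, and the defining holonomy relation $\Proj_2(g-k,\hol^k(\alpha))=\Proj_2(g,\alpha)$ lets me lift each factor to an isotropy element of $\G(I_2)$ based at the common point $(0,y+n_0)$. An induction on $n_0$ identifies the resulting product with $\hol_h^{n_0}$ applied to the identity, using the formula $\hol_h^{k}(1)=\hol^{k-1}(h)\cdots\hol(h)\,h$. Substituting into the $\Proj_2$-equation and invoking \cref{prop:equivrelation}(ii) to pass back from $\G_2$ to $\G(I_2)$, the additional $\hol$-shift permitted by that proposition combines with the telescoped product to yield exactly $(g-n,\hol_h^n(\alpha))=(g',\alpha')$ with $n=-n_0$.

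For Case (ii), $\delta(x,y+i)=(\u(x,y+i),\phi_2^\pm(h))$. The key input is the identity $\phi_2^\pm(h)=\gamma_2^\pm\,\psi^\pm(\gamma_1^\pm)^{-1}$ from \cref{prop:propertiesofmaps}(ii); combined with the definitions $\zeta_i(x,y)=(\eta(x,y),(\gamma_i^\pm)^{-1})$ and the fact that $\Proj_i(\zeta_i)$ is a unit on $\aff^\pm$, this allows me to factor each $\Proj_2(\delta(x,y+i))$ through $\zeta_2^{-1}$ and the $\Psi$-image of $\zeta_1$. The product then telescopes to a single expression involving $\zeta_2^{-n_0}$ and $\Psi(\zeta_1^{n_0})$, and \cref{prop:equivrelation}(i) translates the resulting $\Proj_2$-equality into the claimed form $\zeta_2^{-n}(g,\alpha)\cdot\Psi(\zeta_1^{-m})=(g',\alpha')$, with the two independent integers $n,m$ absorbing the extra $\zeta$-shifts permitted by that proposition.

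Both converse implications are direct verifications: applying $\Proj_2$ to either stated equality and using that $\Proj_2(\zeta_i)$ is a unit (respectively that $\hol$-shifts are invisible after $\Proj_2$) recovers the right-hand side of (\ref{eqn:bimprojcriteria}), from which $\Phi$-equality follows. I expect the main obstacle to be Case (ii): one must track in which of $\G(I_1),\G(I_2)$ each factor lives after applying $\Psi$, ensure composability once $\zeta$-terms are inserted, and reconcile the two independent integers of \cref{prop:equivrelation}(i) with the single integer $n_0$ coming from the covering relation on the units. This is essentially a bookkeeping exercise, but requires careful unpacking of the semidirect product structure on each $\G(I_i)$.
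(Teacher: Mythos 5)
Your proposal is correct and follows exactly the route the paper itself takes: the paper gives no written proof of this proposition beyond the sentence ``If we combine (\ref{eqn:bimprojcriteria}) with \cref{prop:propertiesofmaps} and \cref{prop:equivrelation} then...'', and those are precisely the three ingredients you chain together. Your plan simply fills in the telescoping and base-point bookkeeping that the paper leaves implicit.
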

Since we can characterize $P$ in terms of $\Psi$ and $h$, two $Z$-static pointed bimodules are strongly isomorphic if and only if their associated holonomy isomorphisms are equal. We conclude this section with our main result regarding bimodules over $\caff$.
\begin{theorem}\label{thm:holcorrespondence}
There is a 1-1 correspondence between the following:
\begin{enumerate}[(i)]
\item orientation preserving isomorphisms of holonomy data and orientation preserving $Z$-static pointed $(\G_2,\G_2)$-bimodules;
\item orientation reversing isomorphisms of holonomy data and orientation reversing $Z$-static pointed $(\G_2,\G_2)$-bimodules.
\end{enumerate}
Again, we consider pointed bimodules up to strong isomorphism.
\end{theorem}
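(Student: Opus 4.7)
The plan is to build an inverse to the forward map $(P,p_0)\mapsto(\Psi,h)$ that is already implicit in the preceding discussion, and to verify this bijectively recovers pointed bimodules up to strong isomorphism. I will carry out the orientation preserving case (i); the orientation reversing case (ii) is entirely parallel, replacing the orientation preserving $\Psi:I_1\to I_2$ of \cref{isodatamap} by an orientation reversing one, and using the orientation reversing variant of \cref{lemma:phi}. In the forward direction, from a $Z$-static pointed $(\G_2,\G_1)$-bimodule $(P,p_0)$ I take the pair $(\Psi,h)$ constructed from \cref{lemma:phi} and equation \eqref{eqn:hprop}; that this is an isomorphism of holonomy data in the sense of \cref{defn:mapofholonomy} is \cref{prop:propertiesofmaps}. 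The construction depends only on the canonical static bisection $\sigma$ and on $p_0$, hence descends to the set of strong isomorphism classes.

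For injectivity, suppose $(P,p_0)$ and $(P',p_0')$ yield the same $(\Psi,h)$. The covering maps $\Phi:P_\Psi\to P$ and $\Phi':P_\Psi\to P'$ of \cref{lemma:phi} have their fibers described exclusively in terms of $(\Psi,h)$ by \cref{prop:bimodulequotientversions}. Therefore $\Phi$ and $\Phi'$ identify the same pairs of points in $P_\Psi$, so $\Phi'\circ\Phi^{-1}$ is a well-defined bijection $P\to P'$; compatibility \eqref{eqn:caffbim} and uniqueness of the canonical bisection then upgrade it to a strong isomorphism of pointed bimodules. For surjectivity, given a holonomy isomorphism $(\Psi,h)$, I consider the pointed $(\G(I_2),\G(I_1))$-bimodule $(P_\Psi,p_\Psi)$ from \cref{lemma:bimvsisomap} and impose on it the equivalence relation $\sim$ generated by the two rules of \cref{prop:bimodulequotientversions}, with $\hol_h(\alpha):=\hol_2(\alpha)h$ in rule (i). I set $P:=P_\Psi/\sim$ with basepoint the class of $p_\Psi$. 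Conditions (i) and (ii) of \cref{defn:mapofholonomy} are precisely what is needed for $\sim$ to be compatible with both the left $\G(I_2)$- and right $\G(I_1)$-actions, with the symplectic form, and with the projections $\Proj_i:\G(I_i)\to\G_i$; consequently the actions descend through $\Proj_2$ and $\Proj_1$, making $P$ into a $(\G_2,\G_1)$-bimodule whose symplectic form is induced from that of $P_\Psi$. The canonical static bisection of $P_\Psi$ projects to a static bisection of $P$ through $p_0$, so $(P,p_0)$ is $Z$-static, and unwinding the forward construction recovers $(\Psi,h)$.

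The main obstacle is verifying that the quotient $P_\Psi/\sim$ is a smooth (Hausdorff) symplectic manifold for which the two descended $\G_i$-actions are principal. The compatibility condition (i) in \cref{defn:mapofholonomy} ensures that rule (i) of \cref{prop:bimodulequotientversions}, restricted to $Z$, is consistent with the limits of rule (ii) coming from $\aff^\pm$ (this is the content of the conjugation identity $\Hol_2\circ\Psi=\C_h\circ\Psi\circ\Hol_1$), and condition (ii) ensures the boundary bisections $\zeta_1,\zeta_2$ used in rule (ii) are correctly matched with $h$; together they show that $\sim$ is the orbit equivalence relation of a free and proper $\Z$-action lifting the deck transformations of $\aff\to\caff$. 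Once this is in hand, all the desired manifold, symplectic, and principality properties descend from $P_\Psi$ to $P$ by standard quotient arguments, and the correspondence of the theorem follows.
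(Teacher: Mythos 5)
Your proposal follows essentially the same route as the paper: the forward map via \cref{lemma:phi} and \cref{prop:propertiesofmaps}, injectivity from the fact that \cref{prop:bimodulequotientversions} describes the fibers of $\Phi$ purely in terms of $(\Psi,h)$, and surjectivity by forming the quotient $P_\Psi/\sim$ and checking that the compatibility conditions of \cref{defn:mapofholonomy} make $\sim$ respect the left and right actions (the paper carries out this last verification explicitly, case by case, where you assert it). One small correction: the quotient is in general only a \emph{non}-Hausdorff smooth manifold (just as $\G(I)$ itself fails to be Hausdorff when the maps $\phi^\pm$ are not injective), so you should not claim Hausdorffness for $P_\Psi/\sim$.
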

\begin{proof}
We have already shown how to obtain holonomy data from a bimodule. Hence, we need to show that given a holonomy isomorphism $(\Psi,h)$, we can construct a $Z$-static pointed bimodule. Let $\sim$ be the following equivalence relation on $P_\Psi$:
\[ (g,\alpha) \cdot \sigma(x,y) \sim (g',\alpha') \cdot \sigma(x,y+n) \Leftrightarrow (\ref{eqn:bimprojcriteria}) \mbox{ holds}.   \]
Let $\Phi: \G \to P_\Psi/\sim$ be the natural projection. We need to check that the equivalence relation respects the left and right actions. That is:
\begin{equation}\label{eqn:leftaction} 
\Phi((g_2,\alpha_2) \cdot p ) = \Phi((g_2',\alpha_2') \cdot p') \, ,   
\end{equation}
whenever $\Proj_2(g_2, \alpha_2) = \Proj_2(g'_2, \alpha_2') \mbox{ and } \Phi(p) = \Phi(p')$; and
\begin{equation}\label{eqn:rightaction}
\Phi(p \cdot (g_1, \alpha_1) ) = \Phi(p' \cdot (g_1',\alpha_1') )  
\end{equation}
whenever $\Phi(p) = \Phi(p') \mbox{ and } \Proj_1(g_1,\alpha_1) = \Proj_1(g_1',\alpha_1'),$ for appropriately composable pairs.

We first check, (\ref{eqn:leftaction}) holds. Suppose $p = (g,\alpha) \cdot \sigma(x,y)$ and $p' = (g',\alpha') \cdot \sigma(x,y+n)$. Since $p \sim p'$ we have:
\[ \Proj_2(g,\alpha) = \Proj_2(g',\alpha') \cdot \left( \prod_{i=0}^{n-1} \delta(x,y+i) \right) .  \]
Since $\Proj_2(g_2, \alpha_2) = \Proj(g_2' , \alpha_2')$:
\[ \Proj_2(g_2 g, \alpha_2 \alpha) = \Proj_2(g_2' g', \alpha_2' \alpha') \left( \prod_{i=0}^{n-1} \delta(x,y+i) \right) .\]
So (\ref{eqn:leftaction}) holds.

To check the right action, we use the interpretation of $\sim$ from \cref{prop:bimodulequotientversions} and separate into cases where $g,g_1 \in \G\Aff|_{\aff^\pm}$ and $g \in \G\Aff|_Z$. Suppose $n, m ,k$ and $l$ are integers such that:
\[ \zeta_2^{-n} \cdot (g, \alpha)  \cdot \Psi(\zeta^{-m}) =  (g',\alpha') \,; \]
\[ \zeta_1^{-k} \cdot (g_1, \alpha_1) \cdot \zeta_1^{-l} = (g_1',\alpha_1' ) \, . \]
Composability tells us that $m$ and $k$ must be equal. Hence:
\[ \zeta_2^{-n} \cdot (g g_1, \alpha \Psi(\alpha) ) \cdot  \Psi(\zeta_1^{-l}) = (g' g_1', \alpha \Psi(\alpha_1') ) \,. \]
So the right action is well defined. Now suppose $g,g_1 \in \G\Aff|_Z$. So there exists $n$ and $m$:
\[ (g -n , \hol_h^n(\alpha) ) = (g', \alpha' ) \, ; \]
\[ (g_1 - m , \hol^m(\alpha) ) = (g_1', \alpha_1' ) . \]
Again, we observe that $n = m$ by composability. Since, it suffices to prove the result for $n=1$ we will just show this case. Hence:
\begin{align*}
(g', \alpha ' ) \cdot (g_1' \Psi(\alpha')) &=  (g - 1 , \hol(\alpha) h ) \cdot (g'-1, \Psi(\hol(\alpha'))) \\  
&= ((g g_1) -1 , \hol(\alpha) h (\Psi \circ) \hol(\alpha_1) h\inv h ) \\
&= ( (g g_1) -1 , \hol( \alpha \alpha_1) h ) \\
&= ( (g g_1) - 1, \hol_h(\alpha \alpha_1) ).
\end{align*}
We leave it to the reader to check that $P_\Psi/ \sim$ inherits a (non-hausdorff) smooth structure from $P_\Psi$.
\end{proof}
\subsection{The Picard group $\G(I,\Hol)$}\label{subsection:caffpic}
Before computing the Picard group of $\G = \G(I,\Hol)$ we need this next lemma:
\begin{lemma}\label{innerautocylinder}
Suppose $(\Psi,h): (I,\Hol) \to (I,\Hol)$ is an isomorphism. Then $P(\Psi,h)$ is isomorphic to the trivial bimodule if and only if $(\Psi,h)$ is an inner automorphism.
\end{lemma}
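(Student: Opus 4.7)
The strategy is to reduce to the affine-plane case (\cref{staticbimoduleclass}) via the surjective submersion $\Phi\colon P_\Psi \to P(\Psi,h)$ from \cref{lemma:phi}. By \cref{prop:bimodulequotientversions}, the fibers of $\Phi$ are orbits of a free, properly discontinuous $\Z$-action on $P_\Psi$, so $\Phi$ is a regular covering map.

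For the direction $(\Leftarrow)$, assume $(\Psi, h)$ is inner, so $\Psi = \C_\alpha$ and $\hol(\alpha) = h\alpha$. By \cref{staticbimoduleclass}, $P_\Psi$ is trivial, trivialized by the unique static bisection $\sigma_\alpha\colon \aff \to P_\Psi$ extending $\alpha \in H$. I would then verify that $\sigma_\alpha$ descends along $\Phi$ to a bisection of $P(\Psi, h)$. Descent splits, via \cref{prop:bimodulequotientversions}, into two cases: along $Z$, case (i) reduces the condition to an identity between $\hol(\alpha)$, $h$ and $\alpha$ that is precisely the inner hypothesis; off $Z$, case (ii) holds by the $\G\Aff$-equivariance of $\sigma_\alpha$ together with the fact that $\sigma_\alpha$ takes static value $\phi^\pm(\alpha)\in G^\pm$ and that the compatibility diagram~(\ref{diagram:hol}) (with $\hol^\pm = \C_{\gamma^\pm}$) identifies the two possible descent prescriptions. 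The descended bisection then exhibits $P(\Psi, h)$ as the trivial bimodule.

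For the direction $(\Rightarrow)$, suppose $P(\Psi, h)$ is trivial via a bisection $\tau\colon \caff \to P(\Psi, h)$. Since $\aff$ is simply connected and $\Phi$ is a covering, the composite $\tau \circ C$ (with $C\colon \aff \to \caff$ the universal cover) lifts uniquely, after choosing a base lift, to a smooth map $\tilde\tau\colon \aff \to P_\Psi$. The commuting squares $J_i^{P(\Psi,h)} \circ \Phi = C \circ J_i^{P_\Psi}$ ($i = 1, 2$) together with uniqueness of covering lifts force $J_1^{P_\Psi} \circ \tilde\tau = \Id_\aff$ and imply that $J_2^{P_\Psi} \circ \tilde\tau$ is the unique lift of the diffeomorphism $J_2^{P(\Psi,h)} \circ \tau$, hence itself a diffeomorphism of $\aff$. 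So $\tilde\tau$ is a bisection of $P_\Psi$, making $P_\Psi$ trivial. By \cref{staticbimoduleclass}, $\Psi = \C_\alpha$ for $\alpha := \tilde\tau(0, 0) \in H$. Finally, the descent relation $\Phi(\tilde\tau(0, 0)) = \Phi(\tilde\tau(0, 1))$, read through case (i) of \cref{prop:bimodulequotientversions}, yields the required relation on $\alpha$ and $h$, showing that $(\Psi, h)$ is inner.

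The main obstacle I anticipate is simultaneously reconciling the descent conditions on and off $Z$: case (i) of \cref{prop:bimodulequotientversions} is an intrinsic $H$-level constraint, while case (ii) involves the elements $\gamma^\pm \in G^\pm$ and the homomorphisms $\phi^\pm$. The two cases glue correctly only thanks to item (ii) of \cref{prop:propertiesofmaps}, applied to $(\Psi, h)$ viewed as a self-isomorphism of $(I, \Hol)$; the careful bookkeeping of this gluing, together with confirming that $\Phi$ really is a covering (so that the lifting step goes through), is where the proof's content lies.
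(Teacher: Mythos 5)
Your overall strategy --- reduce to the affine plane via $\Phi\colon P_\Psi\to P(\Psi,h)$, invoke \cref{staticbimoduleclass} to get $\Psi=\C_\alpha$, then extract the condition relating $h$, $\alpha$ and $\hol$ from the identification $\Phi(\delta\cdot\sigma(x,y))=\Phi(\sigma(x,y+1))$ --- is the same as the paper's. But the mechanism you use to transport triviality across $\Phi$ has a genuine flaw. You assert that the fibers of $\Phi$ are orbits of a free, properly discontinuous $\Z$-action, so that $\Phi$ is a regular covering map and bisections can be lifted uniquely over the simply connected $\aff$. This is false: by case (ii) of \cref{prop:bimodulequotientversions}, over the open half-planes the fiber of $\Phi$ through $(g,\alpha)\cdot\sigma(x,y)$ is the \emph{two-parameter} family $\zeta_2^{-n}(g,\alpha)\Psi(\zeta_1^{-m})\cdot\sigma(\cdot)$, $(n,m)\in\Z^2$ (source and target translate independently), while over $Z$ case (i) gives only a one-parameter family. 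Since the bisection $\zeta$ blows up as $x\to 0$, the ``extra'' sheets over the open locus do not limit onto sheets over $Z$, and no neighborhood of a point of $P(\Psi,h)|_Z$ is evenly covered. So $\Phi$ is a surjective local diffeomorphism but not a covering map, and the unique-lifting step in your $(\Rightarrow)$ direction does not go through as stated. The paper avoids lifting entirely: it obtains the triviality of $P_\Psi$ from the functoriality of the pullback construction of \cref{lemma:phi} ($P\isom\G$ forces $\overline{P}\isom\overline{\G}=\G(I)$), and only then reads off $\hol(\alpha)=h\alpha$ from the relation defining $\delta$.

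Two smaller points. First, in both directions you slide from ``admits a bisection'' to ``is trivial''; every $Z$-static bimodule here admits a bisection (it is some $P_F$), so you must use that your bisection is the image of the unit bisection under a bimodule isomorphism, i.e.\ that it intertwines the left and right actions without twisting --- otherwise \cref{staticbimoduleclass} cannot be applied to conclude $\Psi$ is inner. Second, for the $(\Leftarrow)$ direction your descent of $\sigma_\alpha$ is workable but requires exactly the bookkeeping you flag as the obstacle; the paper sidesteps it by pointing the trivial bimodule at $\Proj(\alpha)$, computing that its holonomy isomorphism is $(\C_\alpha,h)$ with $\hol(\alpha)=h\alpha$, and quoting the bijection of \cref{thm:holcorrespondence}. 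I would recommend adopting the pullback argument for $(\Rightarrow)$ and the pointed-bimodule argument for $(\Leftarrow)$, or else supplying a direct proof that sections of $J_1^{P(\Psi,h)}$ lift across $\Phi$, which is not a formal consequence of covering space theory.
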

\begin{proof}
Suppose $(P,p_0)$ is a $Z$-static pointed bimodule such that $P$ is isomorphic to the identity bimodule. Suppose $(\Psi,h)$ is the holonomy isomorphism of $(P,p_0)$. Then $\overline{P} \isom P_\Psi$ is the identity bimodule and therefore $\Psi = C_\alpha: I \to I$ is an inner automorphism. Let $F:P_\Psi \to \G(I)$ be the trivialization of $P_\Psi$ such that $F(\sigma(x,y)) = (\u(x,y),\alpha)$. Then:
\[\xymatrix{
\G(I) \ar[rr]^{F}\ar[dr]^{\Proj} & & P_\Psi \ar[dl]_{\Phi} \\
 & \G \isom P \, , &
}
\]
commutes and therefore $\Phi(\tau(x,y) \cdot \sigma(x,y)) = \Proj(\u(x,y), h \alpha)$. On the other hand:
\[ \Phi(\tau(x,y) \cdot \sigma(x,y)) = \Phi(\sigma(x,y+1)) = \Proj(\u(x,y+1),\alpha) = \Proj(\u(x,y),\Hol(\alpha)) . \]
Therefore: $\hol(\alpha) = h \alpha$.

Now suppose $(\Psi,h)$ is an inner automorphism and $\Psi = C_\alpha$. Consider $(\G(I),\alpha)$ as a $Z$-static pointed bimodule. Then $(\G,\Proj(\alpha))$ is a $Z$-static pointed $(\G,\G)$ bimodule with holonomy isomorphism $(\Psi,h)$ and therefore by \cref{thm:holcorrespondence} $P(\Psi,h) \isom \G$.
\end{proof}
Since the inner automorphisms occur as the kernel of a homomorphism, they are a normal subgroup and it makes sense to define $\OutAut(I,\Hol)$ to be the automorphisms modulo inner automorphisms.

The results of the preceding subsections work just as well for $\caff^\rho$ (see \cref{caffdef}), the only distinction is the scaling of the appropriate symplectic forms the constant term $\rho$. In this section we will fix some $\rho$ and natural integration $\G \rightrightarrows \caff^\rho$. We can now compute the Picard group of $\G \isom \G(I,\Hol)$. As we did for $\G(I)$ denote the subgroup of $\Pic(\G)$ generated by the modular flow by $\mathbf{Mod}$. In this section, $\rho$ will be the modular period of $\caff^\rho$ (see \cref{caffdef}).
\begin{lemma}\label{lemma:cafftwist}
The element $\mbox{\normalfont{Mod}}^t \in \mathbf{Mod}$ for $t=\rho$ is isomorphic to $P(\Hol,e)$.
\end{lemma}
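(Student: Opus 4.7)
The plan is to realize $\Modd^\rho$ as a $Z$-static pointed $(\G,\G)$-bimodule, compute its holonomy isomorphism $(\Psi,h)$ via \cref{prop:propertiesofmaps}, and then invoke \cref{thm:holcorrespondence} to identify it with $P(\Hol,e)$.

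First I would verify that $(\mathbf{H},e)$ really is a valid isomorphism of holonomy data in the sense of \cref{defn:mapofholonomy}. Condition (i) of \cref{prop:propertiesofmaps} reads $\mathbf{H}\circ \mathbf{H}=C_e\circ \mathbf{H}\circ \mathbf{H}$, which is trivial since $C_e$ is the identity. For condition (ii), using $\hol^\pm=C_{\gamma^\pm}$ from \cref{holofGdef}, one has $\mathbf{H}(\gamma^\pm)=\hol^\pm(\gamma^\pm)=\gamma^\pm\gamma^\pm(\gamma^\pm)\inv=\gamma^\pm$, so both sides of condition (ii) reduce to $e$.

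For the main computation, note that the modular flow on $\caff^\rho$ has period $\rho$, so $\Modd^\rho:\G\to\G$ covers the identity on $\caff^\rho$. Consequently $P_{\Modd^\rho}\isom \G$ is a $Z$-static bimodule, and choosing basepoint $p_0:=\u(0,\bar 0)$ yields a pointed $(\G,\G)$-bimodule. Lifting via \cref{lemma:phi} to a pointed bimodule $(P_\Psi,p_\Psi)$ over $\G(I)$, the underlying symplectic groupoid automorphism $\overline{\Modd^\rho}:\G(I)\to \G(I)$ covers the deck transformation $(x,y)\mapsto(x,y+1)$ of the cover $\aff\to\caff^\rho$. To identify $\Psi$, one compares with the definition of the holonomy data in \cref{subsection:holdata}: by construction, $\hol:H\to H$ encodes precisely the failure of a section of $\K$ over $\{0\}\times\S^1$ to close up under $y\mapsto y+1$, and $\hol^\pm=C_{\gamma^\pm}$ records the corresponding effect on $G^\pm$. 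Matching these descriptions with the action of $\overline{\Modd^\rho}$ on unit sections forces $\Psi=\mathbf{H}$. The element $h\in H$ is determined by the bisection $\delta$ measuring the failure of the canonical static bisection $\sigma$ of $P_\Psi$ to descend through the $\Z$-quotient; since $\overline{\Modd^\rho}$ is pure translation with no additional isotropy twist and $\zeta^\pm$ are normalized so that $\Proj(\zeta^\pm)$ is the identity section of $\G$, no extra twist arises and $h=e$. \cref{thm:holcorrespondence} then yields $P_{\Modd^\rho}\isom P(\mathbf{H},e)=P(\Hol,e)$.

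The main technical obstacle is the explicit matching of $\overline{\Modd^\rho}$ with $\mathbf{H}$ in the previous paragraph. Care is needed to track the normalizations of $\zeta^\pm$, $\eta$, and the base points from \cref{subsection:holdata}, lest an unwanted inner automorphism twist be absorbed into $h$ rather than $\Psi$.
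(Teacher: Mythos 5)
Your proposal is correct and follows essentially the same route as the paper: both arguments reduce to exhibiting a base point making $\Modd^\rho$ a $Z$-static pointed bimodule whose holonomy isomorphism is $(\mathbf{H},e)$, and then invoking the correspondence of \cref{thm:holcorrespondence}. The only cosmetic difference is direction — the paper produces the projection $\Phi\colon P_{\Hol}\to\Modd^\rho$ and pushes the base point down, while you lift $\Modd^\rho$ via \cref{lemma:phi} and identify the lift; your added verification that $(\mathbf{H},e)$ satisfies conditions (i)--(ii) of \cref{defn:mapofholonomy} and that the deck-transformation lift forces $\Psi=\mathbf{H}$, $h=e$ supplies detail the paper leaves implicit.
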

\begin{proof}
It suffices to show that there exists $p_0 \in \mbox{Mod}^\rho$ such that $(\mbox{Mod}^\rho,p_0)$ is a $Z$-static pointed bimodule and $(\Hol,e)$ is its associated holonomy isomorphism. The $(\G(I),\G(I))$-bimodule $(P_\Hol,p_\Hol)$ comes with a projection $\Phi: P_\Hol \to \mbox{Mod}^\rho$. If we take $p_0 = \Phi(p_\Hol)$ then the holonomy data of $(\mbox{Mod}^\rho,p_0)$ is $(\Hol,e)$ so we are done. 
\end{proof}
There is an action of $\R$ on $Z\Pic(\G) \isom \OutAut(I,\Hol)$ by conjugation 
\[ t \cdot P = (\mbox{Mod}^t) \otimes P \otimes (\mbox{Mod}^{-t}). \]
Using this action, we can compute the Picard group.
\begin{theorem}\label{thm:picofghol}
For any $\G \isom \G(I,\Hol)$ an integration of $\caff$ with connected orbits there is a surjective homomorphism:
\[ \OutAut(I,\Hol) \rtimes \R  \to \Pic(\G). \]
The kernel of this map is subgroup generated by $(P(\Hol,e),-\rho)$.
\end{theorem}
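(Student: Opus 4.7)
The plan is to follow the blueprint of the affine plane case (\cref{sectionaffineplane}), adapting each step to account for the non-trivial holonomy and modular period. First, I would identify the $Z$-static subgroup $Z\Pic(\G) \subset \Pic(\G)$ with $\OutAut(I, \Hol)$: by \cref{thm:holcorrespondence} and \cref{innerautocylinder}, the correspondence $(\Psi, h) \mapsto P(\Psi, h)$ descends to a group isomorphism, using that composition of bimodules corresponds to composition of holonomy isomorphisms. The modular flow supplies a one-parameter subgroup $\Mod \subset \Pic(\G)$, and conjugation by $\Mod^t$ defines the $\R$-action on $\OutAut(I, \Hol)$ appearing in the semidirect product. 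The candidate homomorphism is
\[ \Theta : \OutAut(I, \Hol) \rtimes \R \longrightarrow \Pic(\G), \quad ([P], t) \longmapsto P \otimes \Mod^t, \]
which is a group homomorphism essentially by the definition of the semidirect product structure.

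For surjectivity, I would mimic the affine plane argument: any symplectic bimodule preserves the class of the modular vector field, so its induced action on the singular circle $\{x = 0\} \subset \caff^\rho$ must be a rotation. Because the modular flow realizes all rotations of this circle, for each $P \in \Pic(\G)$ there exists $t \in \R$ such that $P \otimes \Mod^{-t}$ acts trivially on the critical line and is therefore $Z$-static. This yields a decomposition $P \isom P^Z \otimes \Mod^t$ with $[P^Z] \in \OutAut(I, \Hol)$, and hence the surjectivity of $\Theta$.

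For the kernel, $([P], t) \in \ker \Theta$ forces $P \isom (\Mod^t)\inv$, and since $P$ is $Z$-static, the bimodule $\Mod^t$ must also be $Z$-static. Let $S := \{t \in \R : \Mod^t \in Z\Pic(\G)\}$. Since $\Mod^t$ acts on the singular circle as a rotation varying smoothly in $t$, $S$ is a closed discrete subgroup of $\R$, hence of the form $s_0 \Z$. By \cref{lemma:cafftwist} one has $\rho \in S$, and a direct examination of the modular flow on $\caff^\rho$ should pin down $s_0 = \rho$. It follows that $S = \rho \Z$ and $\Mod^{n\rho} \isom P(\Hol,e)^n$, so the kernel consists of the elements $(P(\Hol,e)^n, -n\rho)$ for $n \in \Z$, which is the cyclic subgroup generated by $(P(\Hol,e), -\rho)$.

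The principal obstacle is the precise identification of $S$: while $\rho \in S$ is immediate from \cref{lemma:cafftwist}, ruling out smaller positive periods requires a careful look at how $\Mod^t$ acts on the critical circle. One must also take care to verify that conjugation by $\Mod^t$ on $Z\Pic(\G)$ really matches the $\R$-action encoded in the semidirect product, and that the functoriality of $(\Psi, h) \mapsto P(\Psi, h)$ under composition of bimodules indeed makes $\Theta$ a homomorphism and not merely a set-theoretic map.
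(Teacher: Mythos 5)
Your proposal is correct and follows essentially the same route as the paper: the same map $([P],t)\mapsto P\otimes\Mod^t$, surjectivity from the fact that every bimodule acts on the critical circle by a rotation realized by the modular flow, and the kernel identified via \cref{lemma:cafftwist}. The one point you flag as an obstacle---that $\Mod^t$ is $Z$-static only for $t\in\rho\Z$---is exactly what the paper asserts without further elaboration, and it closes as you suggest: isomorphic bimodules induce the same map of orbit spaces, so $\Mod^t$ can only be $Z$-static when its rotation of the critical circle is trivial, i.e.\ when $t$ is a multiple of the modular period $\rho$.
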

\begin{proof}
Let $\Phi$ be the map given by $(P(\Psi,h),t) \mapsto P(\Psi,h) \otimes \mbox{Mod}^t$. Any bimodule in $\Pic(\G)$ can be written in the form $P(\Psi,h) \otimes [\mbox{Mod}^t]$ since for any $P$ we can find a $t$ such that $P \otimes M^t \in Z\Pic(\G)$ so the map is surjective.. Furthermore, the map is a homomorphism since for any $P_1,P_2 \in Z\Pic \isom \OutAut(I,\Hol)$ and $t_1,t_2 \in \R$:
\begin{align*}
\Phi(P_1,t_1) \otimes \Phi(P_2,t_2) &= P_1 \otimes \mbox{Mod}^{t_1} \otimes P_2 \otimes \mbox{Mod}^{t_2} \\
&= P_1 \otimes \mbox{Mod}^{t_1} \otimes P_2  \otimes\mbox{Mod}^{-t_1} \otimes \mbox{Mod}^{t_1} \otimes \mbox{Mod}^{t_2} \\
&= P_1 \otimes (\mbox{Mod}^{t_1} \otimes P_2  \otimes\mbox{Mod}^{-t_1}) \otimes \mbox{Mod}^{t_1 + t_2} \\
&= \Phi((P_1,t_1) \cdot (P_2,t_2))
\end{align*}
Finally, if $\Phi(P,t) = e$ then $P \isom [\mbox{Mod}^{-t}]$ meaning that $t = n \rho$ for some $n \in \Z$ and $P \isom P(\Hol,e)^{-n}$.
\end{proof}

\section{Picard groups of stable b-symplectic manifolds}\label{sectionbsymplecticmanifolds}
Throughout this section it will be useful to index sets and group elements with superscripts. To avoid confusion, when the symbols $i,j, k, l, n$ and $m$ occur in 
superscripts, we intend to treat them as indices. I.e an object $X^n$ denotes the element with index $n$ and not `$X$ to the power $n$'. Let $\Cy = \sqcup_{i \in I} C^i$ 
($I$ finite or countable) be a disjoint union of affine cylinders such that each $C^i \isom \caff^{\rho^i}$. In this section $\G$ will denote a natural integration of $\Cy
$. Our main end is to classify $(\G_2,\G_1)$-bimodules and compute $\Pic(\G)$.

Recall that the numbers $\rho_i$ are called the \emph{modular periods} of $\Cy$. Let $\{V^n \}_{n \in N}$ ($N$ finite or countable) be an enumeration of the open orbits of $\Cy$. To each $C^i$ let $E^i$ denote the saturation of $C^i$ in the foliation induced by $\G$. We also define adjacency maps $+: I \to N$ and $-: I \to N$ such that $V^{+(i)}$ is the positive orbit adjacent to $C^i$ (similarly for $-$). Throughout, $\G^i$ denotes the restriction of $\G$ to $C^i$ and $\G^n$ denotes the restriction of $\G$ to $V^n$.
\subsection{Discrete data of $\G$}
Let $\G^i:= \G|_{C^i}$ be the restrictions of $\G$ to a given cylinder. Since each $\G^i$ is an integration of an affine cylinder we can assume it has the form $\G^i \isom \G(I^i,\Hol^i)$ where $\Hol^i= (\hol^i,(\gamma^i)^\pm)$ and:
\[ 
\xymatrix{
G^{-(i)} \ar[d]_{(\hol^i)^-} & H^{i} \ar[d]^{\hol^i} \ar[l]_{(\phi^{i})^-} \ar[r]^{(\phi^{i})^+} & G^{+(i)}  \ar[d]^{(\hol^i)^+} \\
G^{-(i)} & H^{i} \ar[l]_{(\phi^{i})^-} \ar[r]^{(\phi^{i})^-} & G^{+(i)} \, ,
}
\]
commutes. If $\G$ is constructed by restricting to transverse cylinders of a stable b-symplectic manifold (as in \cref{sectionstrategyofproof}) then we have following geometric interpretation:
\[ G^n \isom \pi_1(U_n), \qquad H^i \isom \pi_1(L_i), \qquad \hol^\pm = C_{(\gamma^i)^\pm}, \]
\[ \hol^i = (f_i)_*: \pi_1(L_i) \to \pi_1(L_i). \]
Where each $U_n$ is an open orbit, $f_i: L^i \to L^i$ is the holonomy of the mapping torus $Z_i$ and $(\gamma^i)^\pm_i$ are the homotopy classes of the section of $Z_i \to \S^1$ in adjacent orbits.
\begin{definition}
The \emph{orbit graph}, $\mbox{Gr}(\G)$, of $\G$ is defined as follows:
\begin{itemize}
    \item there is one vertex $v^n$ for each open orbit $V^n \subset \Cy$;
    \item there is one edge $e^{i}$ for each cylinder $C^i \subset \Cy$;
    \item each edge $e^i$ is adjacent to a positive and negative vertex $v^{+(i)}$ and $v^{-(i)}$ respectively.
\end{itemize}
\end{definition}
For a stable b-symplectic manifold, the \emph{orbit graph}, $\mbox{Gr}(M)$, of $M$ is defined to be the orbit graph of any Morita equivalent $\G \rightrightarrows \Cy$. We will need to attach more data to the orbit graph in order to construct a complete set of Morita invariants and enable computation of the Picard group of $\Sigma(M)$.
\begin{definition}\label{defn:discretedata}
To each vertex, $v_n$, assign the following data:
\begin{itemize}
    \item the \emph{sign} of a vertex $v^n$ is the sign of $V^n$;
    \item the \emph{isotropy} of $v^n$ is the isotropy group $G^n$.
\end{itemize}
To each edge, $e^{i}$ we assign the following data;
\begin{itemize}
    \item the \emph{modular period} of an edge $e^{i}$ is $\rho_i$ (the modular period of $C^i$);
    \item the \emph{isotropy data and holonomy data} of $e^{i}$ is the pair $(I^i,\Hol^i)$ such that $\G^i \isom \G(I^i,\Hol^i)$.
\end{itemize}
We call the orbit graph together with the above data $\mathfrak{Gr}$ the \emph{discrete data} of $\G$ and a \emph{discrete presentation} of $M$ (If $\G$ was obtained by a restriction of $\Sigma(M)$).
\end{definition}
\begin{remark}
The discrete data actually characterizes $\G \rightrightarrows \mathcal{C}$ up to isomorphism. We omit a proof of this for brevity, as it is not needed for our calculation. Also note that a discrete presentation of $M$ is not constructed in a canonical manner as it depends on a choice of transversal $\mathcal{C} \to M$.
\end{remark}
\subsection{Isomorphisms of discrete data}
Suppose $\Cy_1 := \sqcup_{i \in I_1} C_1^i$ and $\Cy_2 := \sqcup_{i \in I_2} C_2^i$ and $\G_1$ and $\G_2$ are natural integrations of each. Enumerate the open orbits of $\G_1$ and $\G_2$ as $V_1^{n}$ for $n \in N_1$ and $V_2^{m}$ for $m \in N_2$. We now need to characterize $(\G_2,\G_1)$-bimodules in terms of discrete data. In order to do this, it will be convenient to fix a choice of base points $v_1^n \in V^n$ and $v_2^m \in V^m$ as well as a choice of groupoid elements $g_n^i$ and $g_m^j$ for each $\pm(i) = n$ or $\pm(j) = m$ such that the source of $g_n^i$ is $v^n$ and the target of $g_n^i$ is $[\pm 1,0] \in C^i$. The purpose of this is to fix an identification of the isotropy groups of each half cylinder in the same orbit and so we can treat each open orbit as a pointed manifold.

Suppose $P$ is a $(\G_2, \G_1)$-bimodule. Clearly $P$ induces an isomorphism of the orbit graphs $F:\mbox{Gr}_1 \to \mbox{Gr}_2$. We can think of $F$ as a map $F: I_1 \to I_2$ (also $N_1 \to N_2$). We say $P$ is \emph{orientation preserving} if $P$ preserves the signs of the vertices and orientation reversing otherwise. Since $P$ preserves the modular vector field $F$ must preserve the modular periods.

Now restrict $P$ to the affine cylinders $C_2^{F(i)}$ and $C_1^i$ to obtain a $(\G_2^{F(i)}, \G_1^{i})$-bimodule $P^i$. Suppose $\{p^i \}_{i \in I_1} \subset P$ is a set of points such that $(P^i,p^i)$ is a $Z$-static pointed bimodule for all $i \in I_1$. Then we call $(P, \{ p_i \})$ a $Z$-static \emph{marked bimodule}. Any such marked bimodule has an associated collection of holonomy isomorphisms $(\Psi_i,h_i): (I_1^i, \Hol_1^i) \to (I_2^i, \Hol_2^i)$. Notice that $(\Psi_i,h_i)$ is orientation preserving/reversing if and only if $F$ is orientation preserving/reversing.

For each pointed bimodule $(P^i,p^i)$ let $\Phi^i:P_{\Psi^i} \to P^i$ be the projection from \cref{lemma:phi}. Let $\sigma^i$ be the static bisection of $P_{\Psi^i}$ extending $p^i$ and let:
\[ p_\pm^i := (g_{F(n)}^{F(i)})\inv \cdot \sigma(\pm 1,0) \cdot g^i_n \in P \, . \]
If $P^{\pm(i)}$ is the restriction of $P$ to the open orbits $V^{\pm(i)}$ and $V^{F(\pm(i))}$. Then $(P^{\pm(i)},p_\pm^{i})$ are pointed $(\G^{F(\pm (i))},\G^{\pm(i)})$-
bimodules of transitive groupoids. If $\pm (i) = \pm (j)$, the bimodules $P^{\pm(i)}$ and $P^{\pm(j)}$ are equal. There is a unique $g^\pm_{ij} \in G^{F(\pm(i))}$ such 
that $g^\pm_{ij} \cdot p_\pm^j = p_\pm^i$. Furthermore by \cref{lemma:transbimid} $g^\pm_{ij}$ must satisfy:
\begin{equation}\label{eqn:cocyclecondition1}
\psi_i^\pm = C_{g^\pm_{ij}} \circ \psi_j^\pm,   
\end{equation}
and for $\pm(i)=\pm(j)=\pm(k)$:
\begin{equation}\label{eqn:cocyclecondition2}
g^\pm_{ij} g^\pm_{jk} = g^\pm_{ik}
\end{equation}
This motivates our definition of an isomorphism of discrete data. Let $\mathfrak{Gr}_1$ and $\mathfrak{Gr}_2$ be the discrete data of $\G_1$ and $\G_2$ respectively.
\begin{definition}
An \emph{isomorphism} $\mathcal{F}:\mathfrak{Gr}_1 \to \mathfrak{Gr}_2$ consists of the following:
\begin{itemize}
    \item an \emph{underlying graph isomorphism} $F: \mbox{Gr}_1 \to \mbox{Gr}_2$;
    \item \emph{holonomy isomorphisms} $(\Psi_i,h_i):(I_1^i,\Hol_1^i) \to (I_2^{F(i)},\Hol_2^{F(i)})$;
    \item \emph{cocycles} $g^\pm_{ij} \in G_2^{F(\pm(i))}$ for all $i,j \in I_1$ such that $\pm(i)=\pm(j)$.
\end{itemize}
The underlying graph isomorphism, holonomy isomorphisms, and cocycles must satisfy the following compatibility conditions: 
\begin{enumerate}[(i)]
    \item the holonomy isomorphisms are orientation preserving/reversing if and only if $F$ is orientation preserving/reversing;
    \item if $i \in I_1$ then $\rho_1^{i}=\rho_2^{F(i)}$;
    \item for any $i,j \in I_1$ such that $\pm(i) = \pm(j)$ then (\ref{eqn:cocyclecondition1}) holds;
    \item for any $i,j,k \in I_1$ such that $\pm(i) = \pm(j) = \pm(k)$ then (\ref{eqn:cocyclecondition2}) holds.
\end{enumerate}
\end{definition}
\begin{example}[Inner Automorphisms]
Suppose $\Gr$ is discrete data. Define $\mathcal{F}: \Gr \to \Gr$ to be an automorphism of $\Gr$ such that:
\begin{itemize}
\item $F$ is the identity;
\item $(\Psi_i,h_i) = (\C_{\alpha_i},h_i)$ are inner automorphisms;
\item and $g^\pm_{ij} = (\phi_2^i)^\pm(\alpha_i) (\phi_2^j)^\pm(\alpha_j)\inv$ for all $\pm(i) = \pm(j)$.
\end{itemize}
Such an $\mathcal{F}$ is called an \emph{inner automorphism} of $\Gr$.
\end{example}
\begin{example}[Bimodules]
Modulo the choices of base-points made at the beginning of this section, we can canonically construct an isomorphism of discrete data $\mathcal{F}$ from any $Z$-static marked bimodule $(P,\{p^i \})$. In this case we say that $\mathcal{F}$ is the \emph{discrete isomorphism} associated to $(P,\{p^i\})$.
\end{example}
A \emph{strong isomorphism} $\varphi: (P,\{p^i\}) \to (Q, \{q^i \})$ is an isomorphism of bimodules which preserves the markings.
\begin{proposition}\label{prop:bsympcoresp}
There is a 1-1 correspondence between orientation preserving (reversing) isomorphisms of discrete data $\Gr_1 \to \Gr_2$ and orientation preserving (reversing) $Z$-static marked bimodules modulo strong isomorphisms. 
\end{proposition}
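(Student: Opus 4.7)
The forward map has already been produced in the paragraphs preceding the statement: a $Z$-static marked bimodule $(P,\{p^i\})$ yields an underlying graph isomorphism $F$ from the induced map on leaf spaces, holonomy isomorphisms $(\Psi_i,h_i)$ via \cref{thm:holcorrespondence} applied to each pointed cylinder piece $(P^i,p^i)$, and cocycles $g^\pm_{ij}$ via \cref{lemma:transbimid} applied to the transitive restrictions $P^{\pm(i)}$ based at the points $p^i_\pm$. Every ingredient depends only on the markings, so this is well defined on strong isomorphism classes. My plan is to construct an explicit inverse and then check both directions.

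For surjectivity, given an isomorphism of discrete data $\mathcal{F}=(F,\{(\Psi_i,h_i)\},\{g^\pm_{ij}\})$, I would build the desired bimodule by a local-to-global argument. For each edge $i\in I_1$, \cref{thm:holcorrespondence} supplies a $Z$-static pointed $(\G_2^{F(i)},\G_1^i)$-bimodule $(P^i,p^i)$ realizing $(\Psi_i,h_i)$, and since $C_1^i\hookrightarrow E_1^i$ is a complete Poisson transversal (\cref{example:transversal}) this extends essentially uniquely to a bimodule over $\G_1|_{E_1^i}$. The family $\{E_1^i\}_{i\in I_1}$ is a saturated open cover of $\Cy_1$ whose pairwise overlaps are exactly the open orbits $V_1^n$ with $n=\pm(i)=\pm(j)$. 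Over such an overlap, \cref{lemma:transbimid} identifies bimodule isomorphisms with elements of $G_2^{F(n)}$ satisfying the conjugation relation, so condition \eqref{eqn:cocyclecondition1} guarantees that the cocycles $g^\pm_{ij}$ determine actual isomorphisms $\phi_{ij}\colon P^j|_{V_1^n}\to P^i|_{V_1^n}$, while \eqref{eqn:cocyclecondition2} is exactly the cocycle condition required by \cref{cocyclelemma}. The gluing lemma then produces a $(\G_2,\G_1)$-bimodule $P$ with canonical identifications $P|_{E_1^i}\simeq P^i$, through which the $p^i$ transfer to markings on $P$; by construction the discrete isomorphism associated to this marked bimodule is $\mathcal{F}$.

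For injectivity, suppose $(P,\{p^i\})$ and $(Q,\{q^i\})$ are $Z$-static marked bimodules inducing the same isomorphism of discrete data. Over each cylinder, \cref{thm:holcorrespondence} provides a unique strong isomorphism $\varphi^i\colon(P^i,p^i)\to(Q^i,q^i)$. Restricting $\varphi^i$ to an open-orbit overlap $V_1^n$, \cref{lemma:transbimid} translates it into a group element of $G_2^{F(n)}$, and equality of the associated cocycles forces $\varphi^i$ and $\varphi^j$ to coincide on $V_1^n$. A second application of \cref{cocyclelemma} then patches the $\varphi^i$ into a global strong isomorphism $\varphi\colon P\to Q$.

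The main obstacle I anticipate is bookkeeping: one must verify that the cocycles extracted from $\mathcal{F}$ are literally the cocycles for the transition isomorphisms produced by \cref{cocyclelemma}. This comes down to carefully tracking how the auxiliary arrows $g_n^i$ fixed at the start of the section intertwine the base-points $p^i$ living in the cylinder pieces with the base-points $p^i_\pm$ living in the adjacent open orbits, so that the conjugation condition \eqref{eqn:cocyclecondition1} and the cocycle condition \eqref{eqn:cocyclecondition2} transport correctly between the two pointed-bimodule frameworks of \cref{thm:holcorrespondence} and \cref{lemma:transbimid}.
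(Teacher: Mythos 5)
Your proposal is correct and takes essentially the same route as the paper's own proof: both directions assemble the cylinder-level pointed bimodules supplied by \cref{thm:holcorrespondence} over the saturated cover by the saturations of the cylinders, using \cref{lemma:transbimid} to convert the cocycles $g^\pm_{ij}$ into transition isomorphisms over the shared open orbits and \cref{cocyclelemma} to glue (respectively, to patch the local strong isomorphisms in the injectivity step). The base-point bookkeeping you flag at the end is a genuine point of care, but it is exactly the step the paper also leaves implicit, so there is no gap relative to the published argument.
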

\begin{proof}
Suppose $(P,\{ p^i \})$ and $(Q, \{ q^i \})$ are $Z$-static marked $(\G_2,\G_1)$-bimodules with identical discrete isomorphisms $\mathcal{F}$. By \cref{thm:holcorrespondence} there is a strong identification $\varphi^i :(P^i,p^i) \to (Q,q^i)$ of the restrictions of $P$ and $Q$ to the cylinders $C_1^i$ and $C_2^{F(i)}$. The isomorphisms $\varphi^i$ induce strong isomorphisms: 
\[ \varphi^{\pm(i)}: (P^{\pm(i)},p^i_\pm ) \to (Q^{\pm(i)},q_\pm^i ), \] 
of pointed bimodules. Since $P$ and $Q$ have the same cocycle, these isomorphisms satisfy $\varphi^{\pm(i)} = \varphi^{\pm(j)}$ whenever $\pm(i) = \pm(j)$. Therefore the $\varphi^i$ extend to a unique strong isomorphism $\varphi: (P, \{p^i \}) \to (Q, \{q^i \})$.

On the other hand, suppose $\mathcal{F}$ is an isomorphism of discrete data. Let $(P^i, p^i)$ be $P(\Psi_i,h_i)$ with the standard point. As we saw in the beginning of this section, each $(P^i,p^i)$ induces a pointed bimodule over the adjacent orbits. We denote these pointed $(\G^{F(\pm(i))},\G^{\pm(i)})$-bimodules $(P_\pm^i,p_\pm^i)$. To glue these bimodules together, \cref{cocyclelemma} says that it suffices to construct bimodule isomorphisms $\varphi^\pm_{ij}: P_\pm^j \to P_\pm^i$ for each $\pm(i) = \pm(j)$ such that:
\begin{equation}\label{eqn:cocycleforbim}
\varphi^\pm_{ij} \circ \varphi^\pm_{jk} = \varphi^\pm_{ik} \quad \forall \pm(i)=\pm(j)=\pm(k).
\end{equation}
We apply \cref{lemma:transbimid} and define $\varphi^\pm_{ij}$ to be the pointed bimodule isomorphisms associated to $g_{ij}^\pm$. Then equation (\ref{eqn:compositionisom}) implies that (\ref{eqn:cocycleforbim}) holds if and only if (\ref{eqn:cocyclecondition2}) holds and so the proposition follows.
\end{proof}
Under this 1-1 correspondence, given isomorphisms of discrete data $\mathcal{F}: \G_1 \to \G_2$ and $\mathcal{F}': \G_2 \to \G_3$ the composition corresponds to the following isomorphism:
\begin{itemize}
\item the underlying graph map is the composition $F' \circ F$;
\item the holonomy isomorphisms are the compositions $(\Psi'_{F(i)},h'_{F(i)} )\circ (\Psi_i,h_i)$;
\item and the cocycles are $\{(g')^\pm_{F(ij)} (\psi')_{F(i)}^\pm(g^\pm_{ij}) \}$ for $\pm (i) = \pm(j)$.
\end{itemize}
The last one is a consequence of (\ref{eqn:tensorproductisom}). Given any $(\G_1,\G_2)$-bimodule $P$ one can make a suitable choice of base-points on $\Cy_1$, $\Cy_2$ and $P$ which make $P$ a marked bimodule. Therefore, \cref{maintheorem1} follows:
\begin{theorem}\label{maintheorem1}
Suppose $M_1$ and $M_2$ are stable b-symplectic manifolds and $\Gr_1$ and $\Gr_2$ are discrete presentations of each, respectively. Then $M_1$ and $M_2$ are Morita equivalent if and only if there exists an isomorphism $\Gr_1 \to \Gr_2$. 
\end{theorem}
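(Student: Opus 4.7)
The plan is to deduce \cref{maintheorem1} from \cref{prop:bsympcoresp}, which already translates isomorphisms of discrete data into equivalence classes of $Z$-static marked bimodules between the natural integrations of the Poisson transversals. The bridge between Morita equivalence of the original manifolds and these transverse bimodules is the Poisson transversal construction of \cref{example:transversal}, which provides a canonical Morita equivalence $\Sigma(M_i)\sim \G_i$, where $\G_i$ is the natural integration used to build $\Gr_i$.

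For the $(\Leftarrow)$ direction, suppose $\mathcal{F}:\Gr_1\to\Gr_2$ is an isomorphism. Applying \cref{prop:bsympcoresp} to $\mathcal{F}$ yields a $(\G_2,\G_1)$-bimodule, and tensoring with the transversal Morita equivalences on either side produces a $(\Sigma(M_2),\Sigma(M_1))$-bimodule. Hence $M_1$ and $M_2$ are Morita equivalent.

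For the $(\Rightarrow)$ direction, a Morita equivalence between $M_1$ and $M_2$ composes with the transversal equivalences to produce a $(\G_2,\G_1)$-bimodule $P$. The induced homeomorphism of orbit spaces must respect the graph structure of open orbits joined by critical circles, and since any symplectic bimodule preserves the class of the modular vector field, it must also match the modular periods. This already furnishes a graph isomorphism $F:\mbox{Gr}_1\to\mbox{Gr}_2$ preserving signs and modular periods. The remaining task is to modify $P$ into a $Z$-static marked bimodule compatible with $F$ and then invoke \cref{prop:bsympcoresp}.

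The main obstacle is making $P$ into a $Z$-static bimodule. Each natural integration $\G_i$ carries a family of modular flow bimodules $\mbox{Mod}^{\mathbf{t}}$ parameterized by $\mathbf{t}\in\R^{|I_i|}$ (one rotation parameter per cylinder), arising from the groupoid automorphisms covering the modular flow. On each critical circle, $P$ may send the chosen basepoint to a rotated lift of the target basepoint; tensoring $P$ on the left and right by appropriate modular flow bimodules cancels these rotations cylinder-by-cylinder, yielding a $Z$-static bimodule $\til P$ in the same Morita class as $P$. A marking $\{\til p^i\}$ is then chosen by picking, for each $i$, any element of $\til P$ over the basepoint of the critical circle in $C_1^i$; that each $(\til P^i,\til p^i)$ is $Z$-static as a pointed bimodule follows from the pullback construction of \cref{lemma:phi}, which reduces the question to the affine-plane setting, together with the existence of $Z$-static bisections established in \cref{sectionaffineplane}. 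Applying \cref{prop:bsympcoresp} to $(\til P,\{\til p^i\})$ produces the required isomorphism $\mathcal{F}:\Gr_1\to\Gr_2$, completing the proof.
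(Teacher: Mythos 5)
Your proposal is correct and follows essentially the same route as the paper: both directions reduce, via the transversal Morita equivalences of \cref{example:transversal}, to the correspondence of \cref{prop:bsympcoresp} between isomorphisms of discrete data and $Z$-static marked bimodules. The only cosmetic difference is that where the paper arranges $Z$-staticity by ``a suitable choice of base-points,'' you instead tensor with modular-flow bimodules to cancel the rotations of the critical circles; this is equivalent and, if anything, spells out a step the paper leaves implicit.
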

\subsection{The Picard Group of a stable b-symplectic manifold}
In this section $\G$ is a natural integration of $\Cy$. We continue to use the notation we have established thus far and fix a choice of base points $v^n \in V^n$ and arrows $g_n^i$ as before. We need one last lemma about $Z$-static bimodules.
\begin{lemma}\label{lastinnerlemma}
An automorphism $\mathcal{F}: \Gr \to \Gr$ of the discrete presentation of $\G \rightrightarrows \Cy$ gives rise to the trivial bimodule if and only if $\mathcal{F}$ is an inner automorphism.
\end{lemma}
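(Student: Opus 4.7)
The plan is to treat both implications by restricting the marked bimodule associated to $\mathcal{F}$ to each cylinder $C^i \subset \Cy$, applying the cylinder-level classification (Lemma \ref{innerautocylinder}), and then matching the cocycle data $\{g_{ij}^\pm\}$ on the open orbits via the gluing principle of Lemma \ref{cocyclelemma}. The essential computational fact is that inside $\G^i \isom \G(I^i,\Hol^i)$, any $\alpha \in H^i$ extends uniquely to a $Z$-static bisection whose value at $(\pm 1,0)$ is $(\phi^i)^\pm(\alpha) \cdot \u(\pm 1, 0)$.

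For the forward implication, suppose $\mathcal{F}$ is inner with data $\{\alpha_i\}_{i \in I}$, so $(\Psi_i,h_i)=(\C_{\alpha_i},h_i)$ with $h_i = \hol^i(\alpha_i)\alpha_i\inv$, and $g_{ij}^\pm = (\phi^i)^\pm(\alpha_i)(\phi^j)^\pm(\alpha_j)\inv$ whenever $\pm(i)=\pm(j)$. Mark the trivial bimodule $\G$ over each cylinder by $p^i := \alpha_i \cdot \u(0,0) \in \G^i$; by the reverse direction of Lemma \ref{innerautocylinder}, the pointed cylinder bimodule $(\G^i,p^i)$ has holonomy isomorphism $(\C_{\alpha_i},h_i)$. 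Propagating each $p^i$ to the adjacent open orbits through the canonical static bisection and the fixed base arrows $g_{\pm(i)}^i$ yields markings $p_\pm^i$ differing from $\u(v^{\pm(i)})$ by $(\phi^i)^\pm(\alpha_i)$ (after conjugation by $g_{\pm(i)}^i$), so the cocycle of $(\G,\{p^i\})$ over each shared open orbit is precisely $(\phi^i)^\pm(\alpha_i)(\phi^j)^\pm(\alpha_j)\inv$. The marked trivial bimodule $(\G,\{p^i\})$ therefore has discrete isomorphism $\mathcal{F}$, and Proposition \ref{prop:bsympcoresp} yields $P(\mathcal{F}) \isom \G$.

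Conversely, suppose $P := P(\mathcal{F}) \isom \G$ as $(\G,\G)$-bimodules. Then $F$ must be the identity on the orbit graph, since the trivial bimodule induces the identity map of orbit spaces. Fix an isomorphism $\varphi: P \to \G$ and let $\{p^i\}$ be the marking of $P$ used in the construction of $\mathcal{F}$. Then $\varphi(p^i)$ lies over $(0,0) \in C^i$, hence $\varphi(p^i) = \alpha_i \cdot \u(0,0)$ for a unique $\alpha_i \in H^i$. The marked cylinder bimodule $(\G^i, \alpha_i \cdot \u(0,0))$ has holonomy isomorphism $(\C_{\alpha_i}, \hol^i(\alpha_i)\alpha_i\inv)$ by Lemma \ref{innerautocylinder}, and since $\varphi^i: (P^i,p^i) \to (\G^i,\varphi(p^i))$ is a strong isomorphism of marked bimodules, this coincides with $(\Psi_i,h_i)$. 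The same calculation as in the forward direction, now carried out for $\varphi$, shows $g_{ij}^\pm = (\phi^i)^\pm(\alpha_i)(\phi^j)^\pm(\alpha_j)\inv$, and $\mathcal{F}$ is inner.

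The main obstacle is the careful bookkeeping of the auxiliary arrows $g_n^i$ that identify the end-isotropy of each cylinder with the fixed isotropy $G^n$ at the base point of the adjacent open orbit, particularly for cylinders whose two endpoints lie over the same open orbit. Once this identification is handled, both implications reduce via Lemma \ref{cocyclelemma} to Lemma \ref{innerautocylinder} together with the cocycle identity (\ref{eqn:cocyclecondition2}).
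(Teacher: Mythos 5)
Your proof is correct and follows essentially the same route as the paper: restrict to each cylinder and invoke \cref{innerautocylinder} to see the holonomy isomorphisms are inner, use \cref{lemma:transbimid} to control the cocycles on the open orbits, and conclude via \cref{prop:bsympcoresp}. The only (cosmetic) difference is in the ``inner $\Rightarrow$ trivial'' direction, where you mark the trivial bimodule by $p^i=\alpha_i\cdot\u(0,0)$ so that its discrete isomorphism is $\mathcal{F}$, while the paper re-marks $P$ so that its cocycles become trivial; both exploit the same freedom in the choice of marking.
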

\begin{proof}
Suppose $(P, \{ p^i \})$ is a marked $(\G,\G)$-bimodule and let $\varphi:P \to \G$ be an isomorphism as bimodules. Let $\mathcal{F}$ be the isomorphism of discrete data associated to $(P, \{ p^i \} )$. By \cref{staticbimoduleclass} the holonomy isomorphisms are all inner automorphisms $(\C_{\alpha_i}, h_i)$. As a bimodule, $\G$ has a canonical marking induced by the identity section. Hence if we restrict $\varphi$ to open orbits, denoted $\varphi^n$ we get an isomorphisms of marked bimodules:
\[ \varphi^n:(P^{\pm(i)},p_\pm^i ) \to (\G^{\pm(i)}, \u(v^{\pm(i)})). \] 
By \cref{lemma:transbimid}, for each $i$ in $I$ the isomorphism $\varphi$ corresponds to $(\phi^i)^\pm(\alpha_i)$. Furthermore, since:
\[
\xymatrix{
 & (\G^n, \u(v^{\pm(i)})) & \\
(P^{\pm(i)}, p^{\pm(i)} ) \ar[rr]^{\Id} \ar[ur]^{\phi^n} & & (P^{\pm(j)}, p^{\pm(j)}) \, , \ar[lu]_{\phi^n} 
} 
\]
commutes, we conclude that $g_{ij}^\pm \cdot (\phi^j)^\pm(\alpha_j) = (\phi^i)^\pm(\alpha_i)$. Therefore $\mathcal{F}$ is an inner automorphism.

Now assume $\mathcal{F}$ is an inner automorphism. Since the holonomy isomorphisms $(\C_{\alpha_i},h_i )$ are all inner, there are unique identifications of $\varphi^i:\G^i \to P^i$ such that $\alpha_i \cdot \varphi(\u(0,0)) = p^i$ for all $i \in I$. Let $\{ \overline{p}^i \}$ be a new marking of $P$ obtained from this identification. With this new marking the cocycles of $(P, \{ \overline{p}^i \})$ take the form:
\[ (\phi^i)^\pm (\alpha_i)\inv g^\pm_{ij} (\phi^j)^\pm(\alpha_j) = (\phi^i)^{\pm}(\alpha_i)\inv \cdot (\phi^i)^{\pm}(\alpha_i) \cdot (\phi^j)^{\pm}(\alpha_j)\inv \cdot (\phi^j)^\pm(\alpha_j) = e     \]
Since $(P, \{ \overline{p}^i \})$ and $\G$ have the same discrete data, $P \isom \G$.
\end{proof}
The inner automorphisms of $\Gr$ form a normal subgroup of $\Aut(\Gr)$. Therefore it makes sense to define the outer automorphisms $\OutAut(\Gr)$ to be the inner automorphisms modulo outer automorphisms. We can summarize the preceding work thus far with the equation:
\[ \OutAut(\Gr) \isom Z \Pic(\G), \]
where $Z \Pic(\G)$ are the $(\G, \G)$-bimodules such that the restriction to each cylinder is a $Z$-static bimodule. To compute the full Picard group, we also need to identify the subgroup of bimodules which correspond to modular flows.

Let $X_i \in \mathcal{X}(\Cy)$ be the vector fields on $\Cy$ such that $X_i|_{C_i}$ is the modular vector field of $C_i$ and $X_i = 0$ outside of $C_i$. Since $\G$ is natural, the flow of each such vector field produces a family of groupoid isomorphisms $\Phi^t_i: \G \to \G$. Let 
\[ Q_i(t) := P_{\Phi^i(t)} \] 
be the symplectic bimodules induced by these flows. Since the isomorphisms $\Phi^t_i$ commute, the bimodules $Q_i(t)$ must commute with each other. In other words, we have a homomorphism:
\[ (\R^N,+) \to \Pic(M) \]
where $\R^N$ denotes the vector space of functions $N \to \R$ (recall that $N$ may be an infinite index). We call the image of this map $\Mod(M) \subset \Pic(M)$. Topologically it is a connected Lie group integrating the abelian Lie algebra $\R^N$.
\begin{lemma}
Let $\mathcal{F}_i: \Gr \to \Gr$ be the automorphism of $\Gr$ given by the following data:
\begin{itemize}
    \item The underlying graph automorphism is trivial;
    \item The holonomy isomorphisms $(\Psi_k,h_k)$ are trivial for $k \neq i$ and $(\Psi_i,h_i) = (\Hol_i,e)$;
    \item The cocycles $g^\pm_{kj} = e$ for $k,j \neq i$ and $g^\pm_{ij} = \gamma_i^\pm$ for $j \neq i$.
\end{itemize}
We will call $\mathcal{F}_i$ the \emph{twisting automorphism} about $C_i$. Then the bimodule associated to the automorphism $\mathcal{F}_i$ is $Q_i(\rho_i)$.
\end{lemma}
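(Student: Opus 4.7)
The plan is to compute the discrete isomorphism associated to the bimodule $Q_i(\rho_i) = P_{\Phi^i(\rho_i)}$ via \cref{prop:bsympcoresp} and match it, entry by entry, with $\mathcal{F}_i$. To apply the correspondence I must first equip $Q_i(\rho_i)$ with a $Z$-static marking. On a cylinder $C_k$ with $k \neq i$ the vector field $X_i$ vanishes, so $\Phi^i(\rho_i)|_{\G^k}$ is the identity; thus $Q_i(\rho_i)|_{C_k}$ is canonically the trivial bimodule $\G^k$ and I take $p^k := \u([0,0])$. On $C_i$ the restriction is precisely the bimodule $\Modd^{\rho_i}$ of Section 5, so I take $p^i$ to be the basepoint supplied by \cref{lemma:cafftwist}, whose associated holonomy isomorphism is $(\Hol_i, e)$.

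Two of the three pieces of $\mathcal{F}_i$ then follow immediately. The graph automorphism is trivial, because $\Phi^i(\rho_i)$ integrates a vector field tangent to every orbit of $\G$ and supported on $C_i$, so each open orbit and each component of the singular locus is preserved. The holonomy isomorphisms agree with those of $\mathcal{F}_i$ by construction of the marking: they are trivial on every $C_k$ with $k \neq i$, and equal to $(\Hol_i, e)$ on $C_i$.

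The real content lies in the cocycles. For $k, j \neq i$ sharing an open orbit, the trivial markings of $Q_i(\rho_i)|_{C_k}$ and $Q_i(\rho_i)|_{C_j}$ both transport to the unit section over that orbit, so the cocycle is $e$, matching $\mathcal{F}_i$. The decisive case is a pair $(i, j)$ with $j \neq i$ and $\pm(i) = \pm(j) = n$. Extending $p^j$ trivially gives $p_\pm^j = \u(v^n)$, while $p_\pm^i$ is obtained from the canonical static bisection $\sigma$ of $P_{(\Hol_i, e)} \isom \G(I_i)$ extending $p^i$ by pushing it down to $Q_i(\rho_i)|_{C_i}$ through the projection from \cref{lemma:phi} and then transporting with $g_n^i$. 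The task therefore reduces to showing that this value coincides with $\gamma_i^\pm$ under the identification $G^{\pm(i)} \into \G_{v^n}$ provided by $g_n^i$. This is exactly the geometric content of the lemma: one period of the modular flow along $C_i$, when extended to the adjacent open orbit, is represented by the bisection $\eta$ of $\G\Aff$ whose $\Proj$-image defines $\gamma_i^\pm$ in \cref{subsection:holdata}.

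The main obstacle is carrying out this last identification rigorously. Concretely, one must trace how the canonical static bisection of $P_{(\Hol_i, e)}$, which under the symplectomorphism of \cref{lemma:bimvsisomap} coincides with the unit section of $\G(I_i)$, is sent by the projection of \cref{lemma:phi} into $Q_i(\rho_i)|_{C_i}$, and then recognise that the resulting element over $[\pm 1, 0]$ agrees, modulo the projection $\G(I_i) \to \G|_{C_i}$ of Section 5, with $\eta(\pm 1, 0)$. Once this identification is established, the cocycle equality $g_{ij}^\pm = \gamma_i^\pm$ follows, all three pieces of data match, and \cref{prop:bsympcoresp} yields the isomorphism $Q_i(\rho_i) \isom P_{\mathcal{F}_i}$.
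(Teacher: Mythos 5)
Your overall strategy is the same as the paper's: mark $Q_i(\rho_i)$ with the unit section away from $C_i$ and with the basepoint of \cref{lemma:cafftwist} over $C_i$, observe that the graph map and the holonomy isomorphisms match $\mathcal{F}_i$ for free, and reduce everything to the cocycle computation. The problem is that you then stop exactly at the one step that carries the content of the lemma: you name the identification $g_{ij}^\pm = \gamma_i^\pm$ as ``the main obstacle,'' describe what would have to be traced through $P_{(\Hol_i,e)}$, the projection of \cref{lemma:phi}, and the bisection $\eta$, and conclude with ``once this identification is established \dots the isomorphism follows.'' That is a promissory note, not a proof; as written, the cocycle value is asserted on the grounds that it is ``exactly the geometric content of the lemma,'' which is circular.

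The missing step does not require the detour through $P_{(\Hol_i,e)}$ and the projection $\Phi$ at all. Work directly in $Q_i(\rho_i)=P_{\Phi_i^{\rho_i}}$, which as a manifold is $\G$ with the right action twisted by $\Phi_i^{\rho_i}$. Record how the time-$\rho_i$ flow acts on arrows: it is the identity on $\G|_{\C - C_i}$; for an arrow $g$ with source in $\C - C_i$ and target at the basepoint $[\pm 1,0]\in C_i$, the flow drags the target once around the circle direction of the cylinder, so $\Phi_i^{\rho_i}(g)=\gamma_i^{\pm}\,g$ by the very definition of $\gamma_i^{\pm}$ as $\Proj(\eta(\pm 1,0))$ in \cref{subsection:holdata}; and on $\G^i$ it is conjugation by $\gamma_i^{\pm}$. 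With $q^j=\u$ over the open orbit and $q^i$ the unit over $[\pm1,0]$, one then computes in the twisted bimodule
\begin{equation*}
g\cdot q^{j}\cdot g\inv \;=\; g\,\Phi_i^{\rho_i}(g\inv)\;=\;g\,g\inv(\gamma_i^{\pm})\inv\;=\;(\gamma_i^{\pm})\inv\cdot q^{i},
\end{equation*}
which pins down the cocycle relating the two markings and completes the comparison with $\mathcal{F}_i$. You should supply this (or an equivalent) computation; without it the proposal establishes only the trivial two thirds of the lemma.
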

\begin{proof}
Observe that the twisting automorphism behaves in the following manner. Suppose $g \in \G$ is in an open orbit. Then for $g \in \G|_{\C - C_i}$ we have that $\Phi^{\rho_i}_i(g) = g$. If $g \in \G$ such that the target of $g$ lies in $C_i$ and the source of $g$ lies in $\C - \C_i$, then $\Phi^{\rho_i}_i (g) = \gamma_i^\pm g$. For $g \in \G^i$ we have that $\Phi^{\rho_i}_i$ is conjugation by $\gamma_i^\pm$.

The bimodule $Q_i(\rho_i)$ is diffeomorphic to $\G_2$ where the right action of $\G_1$ satisfies $g_2 \cdot g_1 = g_2 \Phi^{\rho_i}_i$. Since $Q_i(\rho_i)$ has this form, it inherits a natural marking from the identity section $\u: \C \to \G$. From this it is easy to deduce that there is natural strong isomorphism of $P|_{\C - \C_i}$ and $\G_{\C - \C_i}$ (the identity bimodule). We denote this marking by the tuple $\{ q^i \}$. Furthermore, given any $g \in \G$ whose target is the base point of $q^i$ and whose source is the base point of $q^j$ for $j \neq i$. We notice that: 
\[ g \cdot q^j \cdot g\inv =  g g\inv (\gamma_i^\pm)\inv \cdot q^i = (\gamma_i^\pm)\inv \cdot q^i \, .  \]
Therefore the cocycle part of this marked bimodule must as above.
\end{proof}

We see that the additive group $(\R^n,+)$ acts on $Z\Pic(M) \isom \OutAut(\Gr)$ by conjugation:
\[ (t_1,...,t_N) = Q_1^{t_i} \cdot ... \cdot Q_N^{t_N} P  Q_N^{-t_N} \cdot ... \cdot  Q_1^{-t_1}.  \] 
This action measures the failure of elements of $Z \Pic(\G)$ and $\Mod(\G)$ to commute. Since any element $P \in \Pic(\G)$ can be written as a product of elements in these groups, we have proved \cref{maintheorem2}.
\begin{theorem}\label{maintheorem2}
Suppose $M$ is a stable b-symplectic manifold and $\mathfrak{Gr}$ is a discrete presentation of $M$. There is a surjective group homomorphism:
\[ \OutAut(\mathfrak{Gr}) \ltimes R^N  \to \Pic(M).  \]
The kernel of this group homomorphism is generated by elements of the form $([\mathcal{F}_i]\inv,Q_i^{\rho_i})$.
\end{theorem}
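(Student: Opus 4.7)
The plan is to imitate the strategy of \cref{thm:picofghol}, lifting the argument from a single cylinder to the disjoint union $\Cy$ by using \cref{prop:bsympcoresp} to translate between bimodules and discrete data and \cref{cocyclelemma} to glue. First I would define
\[ \Phi: \OutAut(\mathfrak{Gr}) \ltimes \R^N \to \Pic(M), \qquad ([\mathcal{F}], t) \mapsto P_\mathcal{F} \otimes Q_1(t_1) \otimes \cdots \otimes Q_N(t_N), \]
where $P_\mathcal{F}$ is the $Z$-static bimodule associated to $\mathcal{F}$ via \cref{prop:bsympcoresp}. Since the modular flows $\Phi_i^t$ have pairwise disjoint supports the $Q_i(t_i)$ commute, and the conjugation action of $\R^N$ on $\OutAut(\mathfrak{Gr})$ described just before the theorem is by construction the semidirect structure in the domain, so $\Phi$ is a well-defined group homomorphism.

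For surjectivity, given $P \in \Pic(\G)$, I would first use that $\OutAut(\mathfrak{Gr})$ surjects onto the group of graph automorphisms of $\mathrm{Gr}(\G)$ to absorb the underlying graph automorphism of $P$, reducing to the case in which $P$ preserves the orbit graph. Restricting such a $P$ to each cylinder $C^i$ and applying \cref{thm:picofghol} to $\Pic(\G^i)$ decomposes the local piece as a product of a $Z$-static class and a modular flow $Q_i(t_i)$, uniquely determining a $t \in \R^N$. Cancelling these flows globally yields a $Z$-static bimodule whose class in $\OutAut(\mathfrak{Gr})$, furnished by \cref{prop:bsympcoresp} and \cref{lastinnerlemma}, combines with $t$ to give a preimage of $P$ under $\Phi$.

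For the kernel, suppose $\Phi([\mathcal{F}], t) = [\G]$, i.e.\ $P_\mathcal{F} \otimes Q_1(t_1) \otimes \cdots \otimes Q_N(t_N) \simeq \G$ as $(\G,\G)$-bimodules. Restricting to each $C^i$ and invoking \cref{thm:picofghol} forces $t_i = n_i \rho_i$ for integers $n_i$ and identifies the local contribution of $\mathcal{F}$ with $\mathcal{F}_i^{n_i}$ modulo inner automorphisms. The twisting-automorphism lemma immediately preceding the theorem identifies $P_{\mathcal{F}_i} \simeq Q_i(\rho_i)$, so the relations $\Phi([\mathcal{F}_i]\inv, Q_i^{\rho_i}) = [\G]$ hold and generate the kernel.

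The step I expect to cause the most trouble is not any single identity but the combinatorial bookkeeping around the cocycle data $g^\pm_{ij}$: the twisting automorphism $\mathcal{F}_i$ alters cocycles on every open orbit adjacent to $C^i$ by $\gamma_i^\pm$, and the non-trivial $\R^N$-action on $\OutAut(\mathfrak{Gr})$ means one must verify that the contributions coming from different $\mathcal{F}_i^{n_i}$ assemble into the correct global cocycle and that, after quotienting by inner automorphisms, nothing new arises beyond the local relations. Combining \cref{cocyclelemma} with the functoriality of $\Psi \mapsto P_\Psi$ developed in \cref{sectionaffineplane} and \cref{sectionaffinecylinder} should ultimately reduce everything to the already-established local picture on each cylinder.
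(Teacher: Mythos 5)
Your proposal follows essentially the same route as the paper's (quite terse) proof: identify $Z\Pic(\G)\isom\OutAut(\Gr)$ via the marked-bimodule correspondence of \cref{prop:bsympcoresp} together with \cref{lastinnerlemma}, peel off the modular part using the commuting flows $Q_i(t)$, and read the kernel off the twisting automorphisms $\mathcal{F}_i$ and the relation $P_{\mathcal{F}_i}\isom Q_i(\rho_i)$. The one misstep is your claim that $\OutAut(\Gr)$ surjects onto the graph automorphism group of $\mathrm{Gr}(\G)$ --- it does not in general, since a graph automorphism must also respect modular periods and the isotropy/holonomy decorations --- but you do not actually need this: the only graph automorphism you must realize is the one induced by the given bimodule $P$, and after composing with modular flows to make $P$ cylinderwise $Z$-static and choosing a marking, $P$ itself furnishes a discrete isomorphism with exactly that underlying graph map, so surjectivity goes through as in the paper.
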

This concludes the main portion of the paper. The next section will examine a few examples.

\section{Applications and examples}\label{sectionexamples}
\subsection{Compact surfaces}
We mentioned earlier Radko and other authors in studying b-symplectic structures on surfaces. In particular the Picard group was computed by Radko and Shylakhtenko, \cite{Radko}. Their work contained two important results. The first was that any $\Sigma(M),\Sigma(M)$ bimodule admitted a lagrangian bisection (for $M$ a 2-dim compact, oriented, b-symplectic manifold). The consequence of this is that the Picard group of $M$ can be interpreted as the group of outer Poisson automorphisms of $M$ Secondly, they were able to provide combinatorial data which would assist in the explicit calculation of $M$.

In this section, we will see what \cref{maintheorem1} and \cref{maintheorem2} mean in dimension 2, and compare the result to the existing work..

Suppose $M$ is a compact surface with a b-symplectic structure. Then $M$ is automatically stable. Our procedure of finding transverse cylinders to $M$ corresponds to restricting $\Sigma(M)$ to collars around the singular locus $Z$.

When $M$ is a compact surface, we automatically get many simplifications of the data encoded by $\Gr(M)$.
\begin{enumerate}
\item Since the groups $H_i$ are all trivial, the isotropy data just consists of assigning each $v_i$ the fundamental group of its associated open leaf. Given the orbit graph, we can instead simply state the genus of each open leaf.
\item The elements $\gamma^\pm_i$ are the homotopy classes of loops around the collars of each open leaf. Therefore, they are completely determined by the orbit graph, together with the fundamental group (or genus) of each open leaf.
\item The modular periods are specified as before.
\end{enumerate}
Therefore \cref{maintheorem1} reduces to the classification of compact b-symplectic surfaces from \cite{Burz1}.

Let us see what automorphisms of $\Gr(M)$ correspond to. First let us restrict to static automorphisms of $\Gr(M)$ (i.e. those which fix the underlying graph). Such automorphisms produce static bimodules which we denote $S\Pic(M)$. 

Recall that to specify a morphism we must give automorphisms: 
\[ (\Psi_i,h) = (I_i,\Hol_i) \to (I_i,\Hol_i)\, , \] 
and cocycles $\{ g^\pm_{ij} \}$. The groups $H_i$ are all trivial the various diagrams commute trivially, so an automorphism $(\Psi_i,h)$ is the same as a pair of maps $\psi_i^\pm$ such that $\psi_i^\pm(\gamma_i^\pm) = \gamma_i^\pm$. 

The cocycles must satisfy $C_{g^\pm_{ij}} \circ \psi_j^\pm = \psi_i^\pm$. The morphism $\Gr(M) \to \Gr(M)$ will be trivial

Let $V_n$ be any open orbit. Then the cocycles $g^\pm_{ij}$ for $i$ and $j$ adjacent to $V_n$ together with the maps $\phi^\pm_i: G_n \to \G_n$ and $\phi^\pm_j: G_n \to G_n$ are equivalent (modulo inner automorphisms) to specifying an element of the mapping class group of the associated surface. This recovers the combinatorial description of the Picard group from \cite{Radko}. 
\subsection{Examples due to Cavalcanti}
In \cite{Caval}, Cavalcanti demonstrated several ways of constructing b-symplectic manifolds. In particular, he showed that given a symplectic manifold $M$ with a compact symplectic submanifold $L$ of codimension 2 with trivial normal bundle admits a log symplectic structure such that $Z \isom \sqcup_{i=1}^N \S^1 \times L$ (see thm 5.1 in \cite{Caval}).

We can can immediately see that any such example is a \emph{stable} b-symplectic manifold. We will restate the construction here and then compute the discrete presentation and Picard group of a simple example.

Since $L$ has a trivial normal bundle, the symplectic neighborhood theorem says that there is a tubular neighborhood, $U$ of $L$ such that $U \isom D^2 \times L$ as a symplectic manifold. Then for any embedding of closed curves $\Gamma:\sqcup_{i=1}^N \S^1 \into D^2$ we can rescale the Poisson bivector by a function which vanishes on $\Gamma$ linearly and which is constant near the boundary of $D^2$. We can easily extend to a function on all of $M$ and rescale the bivector on $M$. The result will be a b-symplectic structure which agrees with the symplectic structure on $M$ outside of $U$, but which $Z$ has $N$ connected components, all of them contained in $U$.

The orbit graph of $M$ will be determined by the topological arrangement of the closed curves in $D^2$. There will be one orbit for each connected component of $D^2 \setminus Z$. We will now consider the concrete example mentioned in \cite{Caval}.
\begin{example}
Consider $M = \C P^2 \# \bar{\C P^2}$ (i.e. the blowup of $\C P^2$ at a point). $M$ has the structure of a locally trivial $\S^2$ bundle over $\S^2$. Let $p$ be a point in $\S^2$. Then 
\[ M|_{{\S^2 \setminus p}} \isom \S^2 \times D^2. \]
We can now choose curves in $D^2$ to define the singular locus. Suppose there is only one. Then the induced b-symplectic structure on $M$ has two open leaves and every symplectic leaf of $M$ is simply connected.

The orbit graph of $M$ consists of two vertices connected by a single edge. The isotropy data and holonomy are all trivial. When defining the b-symplectic structure above we can arrange for whichever modular period we want, suppose we chose $\rho$. We see immediately that by \cref{maintheorem1} that $M$ is Morita equivalent to the Radko sphere, $\S^2$ with modular period $\rho$. 

In fact, for any topological arrangement of curves in $D^2$, we can see that the induced b-symplectic structure on $M$ will be Morita equivalent to the sphere $\S^2$ with the same arrangement of singular curves (thinking of $D^2$ as a punctured sphere).

The Picard group can therefore be computed by existing methods for surfaces. The case of a single singular curve provides an easy calculation. The holonomy, and isotropy data is all trivial and therefore the Picard group of $M$ is $\Z_2 \times \S^1$. The factor of $\Z_2$ corresponds to the orientation reversing graph swap while the $\S^1$ factor corresponds to the modular flows. 
\end{example}
\subsection{Cosymplectic boundaries}
The following class of examples is mentioned in the work of Frejlich et al (see \cite{Frei}). Suppose $N$ is a connected symplectic manifold with a cosymplectic boundary $\partial N$. We say that the boundary of $N$ is \emph(stable) if $Z:= \partial M$ is a disjoint union of mapping Tori. We can construct a b symplectic manifold $M$ by gluing two copies of $N$ along their boundary. The orbit graph of $M$ will have two vertices and one edge for each connected component of $Z$.

The isotropy and holonomy data will be determined by topology of the embeddings $Z = \partial N \to N$. Note that $\Gr(M)$ comes with a cannonical orientation reversing automorphism and $Z\Pic(M) \isom \Z_2 \times Z\Pic^+(M)$ where $Z\Pic^+(M)$ are the orientation preserving automorphisms of $\Gr(M)$. 

Lefschetz fibrations over the two dimensional disk provide a well studied class of examples. Let $M$ be a 4 dimensional manifold and $p: M \to \mathbb{D}^2$ be a Lefschetz fibration such that $p\inv(x) \isom S_g$ is a compact surface of genus $g$ for any regular value $p$. By a theorem of Gompf \cite{Gompf} we can equip $M$ with a symplectic structure such that the boundary of $M$ is a cosymplectic mapping torus $M_f$. 

In a paper by Eliashberg \cite{Elia}, it was proved that any symplectic mapping Torus could be realized as the boundary of a Lefschetz fibration over $\mathbb{D}^2$. The applications of these results to b-symplectic geometry were pointed out in \cite{Frei}. The topology of Lefschetz fibrations has been a subject of keen interest, and is determined by the monodromy around singular fibers. By computing the monodromy of these fibrations, one can apply \cref{maintheorem2} to compute the Picard group of the assocaited b-manifolds. We will conclude our comments with simple example.
\begin{example}
Let $T^2$ be two dimensional torus and $f: T^2 \to T^2$ be a single (right handed) Dehn twist (i.e $f(\theta_1,\theta_2) = (\theta_1 + \theta_2,\theta_2)$. There is a unique Lefschetz fibration $p: M' \to \mathbb{D}^2$ with monodromy $f$ around the the singular point $0 \in \mathbb{D}^2$.

As per our previous comments we can glue two copies of $M'$ along the boundaries $\partial M$ to form a b-symplectic manifold $M$ with a connected singular locus of the form $Z = M_f$.

By studying the topology of $M$ we can conclude that the isotropy data takes the form:
\[ 
\xymatrix{
\Z \ar[d]_{\mbox{Id}} & \Z \oplus \Z \ar[l]_{\pr_2} \ar[r]^{\pr_2} \ar[d]^{\hol} & \Z \ar[d]^{\mbox{Id}} \\
\Z  & \Z \oplus \Z \ar[l]_{\pr_2} \ar[r]^{\pr_2} & \Z 
}
\]
Where the map $\hol$ is given by the matrix $(\begin{smallmatrix} 1 & 1 \\ 0 & 1 \end{smallmatrix})$. Notice that the $+$ and $-$ maps are not injective and so $\Sigma(M)$ is not Hausdorff.

An orientation preserving automorphism $(\Psi,h)$ of this data turns out to be a choice of matrix $(\begin{smallmatrix} 1 & a \\ 0 & 1 \end{smallmatrix})$ and an element $h= 0 \oplus z \in \Z \oplus \Z$. Since every group here is abelian, the inner automorphisms are trivial. The orientation reversing automorphisms can always be written as the obvious swapping map and an orientation preserving automorphism. Therefore, $Z \Pic(M) \isom \Z \times \Z \times \Z_2$. 

The twisting automorphism from the modular flow corresponds to $((\begin{smallmatrix} 1 & 1 \\ 0 & 1 \end{smallmatrix}), 0 \oplus 0)$. Since this element has infinite order in $Z \Pic$ we can conclude that $\Mod(M) \isom \R$. The action of $\R$ on $\Z \Pic$ is trivial so we have a surjective group homomorphism,
\[ \R \times (\Z \times \Z \times \Z_2) \to \Pic(M). \]
The kernel of this map are elements of the form $(n,(-n,m,k))$ and so
\[ \Pic(M) \isom \R \times \Z \times \Z_2. \]
\end{example}

\begin{bibdiv}
\begin{biblist}

\bib{Burz4}{article}{
	author={Bursztyn, Henrique},
	author={Crainic, Marius},
	title={Dirac structures, momentum maps, and quasi-Poisson manifolds},
	year={2005},
	publisher={Birk\:{a}user Boston},
	journal={Progress in Mathematics},
	volume={232}
}
\bib{Burz3}{article}{
	author={Bursztyn, Henrique},
	author={Crainic, Marius},
	author={Weinstein, Alan},
	author={Zhu, Chenchang},
	title={Integration of Twisted Dirac Brackets},
	year={2004},
	publisher={Birk\:{a}user Boston},
	journal={Duke Math. J.},
	volume={123},
	number={3},
	pages={549-601}
}

\bib{Burz5}{article}{
	author={Bursztyn, Henrique},
	author={Fernandes, Rui},
	title={Picard Groups of Poisson Manifolds},
	journal={to appear in J. Differential Geometry},
	eprint={arXiv:1509.03780 [math.DG]},
	year={2016}
}

\bib{Burz1}{article}{
	author={Bursztyn, Henrique},
	author={Radko, Olga},
	title={Gauge equivalence of Dirac structures and symplectic groupoids},
	journal={Annales de l'Institut Fourier} ,
	year={2003},
	volume={53},
	number={1},
	pages={309-337}
}

\bib{Burz2}{article}{
	author={Bursztyn, Henrique},
	author={Weinstein, Alan},
	title={Picard Groups in Poisson Geometry},
	journal={Moscow Math. J.},
	volume={4} ,
	year={2004},
	number={1},
	pages={39-66}
}

\bib{Caval}{article}{
	author={Cavalcanti, Gil}, 
	title={Examples and counter-examples of log-symplectic manifolds},
	year={2013},
	eprint={arXiv:1303.6420}
}

\bib{modular}{article}{
   author={Crainic, Marius},
   title={Differentiable and algebroid cohomology, van Est isomorphisms, and
   characteristic classes},
   journal={Comment. Math. Helv.},
   volume={78},
   date={2003},
   number={4},
   pages={681--721},
   issn={0010-2571},
   review={\MR{2016690}},
   doi={10.1007/s00014-001-0766-9},
}

\bib{Crainic}{article}{
	author={Crainic, Marius},
	author={Fernandes, Rui},
	title={Integrability of Lie brackets}, 
	journal={Annals of Mathematics},
	volume={157},
	pages={575-620}, 
	year={2003}
}

\bib{Crainic2}{article}{
	author={Crainic, Marius},
	author={Fernandes, Rui},
	title={Integrability of Poisson Brackets}, 
	journal={J. Differential Geom.},
	volume={66},
	year={2004}, 
	number={1}, 
	pages={71-137}
}

\bib{Elia}{article}{
   author={Eliashberg, Yakov},
   title={A few remarks about symplectic filling},
   journal={Geometry \& Topology},
   volume={8},
   date={2004},
   pages={277--293},
   issn={1465-3060},
   review={\MR{2023279}},
   doi={10.2140/gt.2004.8.277},
}

\bib{Frei}{article}{
	author={Frejlich, Pedro},
	author={Mart{\'i}nez, David},
	author={Miranda, Eva},
	title={A note on symplectic topology of b-manifolds},
	journal={to appear in the Journal of Symplectic Geometry},
	year={2013},
	eprint={arXiv:1312.7329}
}

\bib{Gompf}{book}{
   author={Gompf, Robert E.},
   author={Stipsicz, Andr{\'a}s I.},
   title={$4$-manifolds and Kirby calculus},
   series={Graduate Studies in Mathematics},
   volume={20},
   publisher={American Mathematical Society, Providence, RI},
   date={1999},
   pages={xvi+558},
   isbn={0-8218-0994-6},
   review={\MR{1707327}},
   doi={10.1090/gsm/020},
}

\bib{Gualt}{article}{
	author={Gualtieri, Marco},
	author={Li, Songhao},
	title={Symplectic groupoids of log symplectic manifolds}, 
	journal={Int Math. Res. Not.},
	year={2014}, 
	number={11}, 
	pages={3022-3074}
}

\bib{GMP1}{article}{
	author={Guillemin, Victor},
	author={Miranda, Eva},
	author={Pires, Ana Rita},
	title={Codimension one symplectic foliations and regular Poisson manifolds},  
	journal={Bull Braz Math Soc, New Series},
	volume={42},
	number={4}, 
	pages={603-623}, 
	year={2011}
}

\bib{GMP2}{article}{
	author={Guillemin, Victor},
	author={Miranda, Eva},
	author={Pires, Ana Rita},
	title={Symplectic and Poisson geometry on b-manifolds}, 
	journal={Advances in Mathematics},
	volume={264},
	year={2014}, 
	pages={864-896}
}

\bib{Makenzi}{book}{
	author={Mackenzie, Kirill},
	title={General Theory of Lie Groupoids and Lie Algebroids}, 
	series={London Mathematical Society Lecture Note Series},
	volume={213}, 
	publisher={Cambridge University Press}, 
	year={2005}
}

\bib{Xu}{article}{
	author={Xu, Ping},
	title={Morita equivalence of Poisson manifolds}, 
	journal={Comm. Math. Phys.},
	volume={142}, 
	year={1991},
	number={3}, 
	pages={493-509}
}

\bib{Radko}{article}{
	author={Radko, Olga},
	author={Shlyakhtenko, Dimitri},
	title={Picard groups of topologically stable Poisson structures},
	journal={Pacific J. Math.},
	volume={224}, 
	year={2006}, 
	number={1}, 
	pages={151-183}
}

\bib{Radko2}{article}{
	author={Radko, Olga},
	title={A classification of topologically stable Poisson structures on a compact oriented structure}, 
	journal={J. Symplectic Geom.},
	volume={1},
	year={2002}, 
	number={3}, 
	pages={523-542}
}

\end{biblist}
\end{bibdiv}

\end{document}